\renewcommand{\bar}{\overline}
\renewcommand{\hat}{\widehat}
\newcommand{\e}{{\rm e}}
\renewcommand{\d}{{\rm d}}
\renewcommand{\P}{\mathbb{P}}
\definecolor{airforceblue}{rgb}{0.36, 0.54, 0.66}
\newcommand{\BP}[1]{\textcolor{airforceblue}{#1}}
\newtheorem{theorem}{Theorem}[section]
\newtheorem{proposition}[theorem]{Proposition}
\newtheorem{lemma}[theorem]{Lemma}
\newtheorem{definition}[theorem]{Definition}
\newtheorem{remark}[theorem]{Remark}
\newtheorem{algorithm}[theorem]{Algorithm}
\newtheorem{assumption}[theorem]{Assumption}
  \pgfplotsset{compat=newest}
\newlength\figureheight
  \newlength\figurewidth
  \newlength\figH
  \newlength\figW
\pgfplotsset{%
    tick label style={font=\scriptsize},
    label style={font=\footnotesize},
    legend style={font=\footnotesize},
   	every axis plot/.append style={very thick}
}
\begin{document}

\begin{frontmatter}

\title{Detecting Markov Chain Instability:\\A Monte Carlo Approach}
\runtitle{Detecting Markov Chain Instability}


\author{\fnms{M. Mandjes$^{*}$, }\ead[label=e1]{m.r.h.mandjes@uva.nl}\snm{B. Patch$^{\dag*}$ \&}\ead[label=e2]{b.patch@uq.edu.au} \snm {N.S. Walton$^{*\circ}$}\ead[label=e3]{neil.walton@manchester.ac.uk}}
\address{\printead{e1}}
\address{ \printead{e2}}
\address{\printead{e3}}
\affiliation{$^*$University of Amsterdam}
\affiliation{$^\dag$The University of Queensland}
\affiliation{$^\circ$The University of Manchester}

\runauthor{M. Mandjes, B. Patch and N. S. Walton}

\begin{abstract}

We devise a Monte Carlo based method for detecting whether a non-negative Markov chain is stable for a given set of parameter values. More precisely, for a given subset of the parameter space, we develop an algorithm that is capable of deciding whether the set has a subset of positive Lebesgue measure for which the Markov chain is unstable. The approach is based on a variant of simulated annealing, and consequently only mild assumptions are needed to obtain performance guarantees.

The theoretical underpinnings of our algorithm are based on a result stating that the stability of a set of parameters can be phrased in terms of the stability of a single Markov chain that searches the set for unstable parameters. Our framework leads to a procedure that is capable of performing statistically rigorous tests for instability, which has been extensively tested using
several examples of standard and non-standard queueing networks.
\end{abstract}



\end{frontmatter}

\section{Introduction}
The stability of a Markov chain is arguably among its most important properties. For example, in queueing applications it offers the guarantee that service has been sufficiently provisioned to cope with the load imposed on the network in the long run. For this reason the assessment of the stability of Markov chains has long been an area of intense research.
The objective is often to determine the set of parameter values for which the system's state does not diverge, referred to as the {\it stability region}, of a Markov chain. For many relatively standard Markov chains the stability region is easily expressed in terms of quantities related to the transition probabilities.
However, despite a substantial and growing literature, for a large class of systems determining the stability region has appeared a subtle and highly non-trivial task. Importantly, various (at first sight) counterintuitive results have been found; in particular, for specific queueing models `na\"{\i}vely conjectured'  conditions turn out to be insufficient to ensure stability. 

More specifically, initial results, for example those by Jackson \cite{Ja63}, Baskett {\it et al.} \cite{bcmp75}, and Kelly \cite{ke75}, suggested that the stability of queueing networks would be determined by the network's {\it subcritical region}  (i.e., the set of parameters for which the nominal load at each queue is less than $1$). This conjecture was later proven incorrect by a series of counterexamples that showed instability can occur with subcritical parameters when seemingly benign work conserving rules are applied. Early examples include those of Lu and Kumar \cite{LuKu91}, Rybko and Stolyar \cite{RySt93}, and Kumar and Seidman \cite{KuSe90}. In these examples the instability is typically essentially caused by the priority rules that apply between the customer classes. Similar effects can, however, be constructed in first-in first-out queueing networks with customer classes that have strongly differing mean service requirements, see for example Bramson \cite{Br94}. 
It was thus realized that, at first sight counterintuitively, {\it decreasing} the mean service requirement of certain job classes can in fact induce instability. As a consequence the stability region need not be monotone (nor convex) in its parameters.  For example, Bordenave {\it et al.} provide an instance of a non-convex stability region in \cite{BoMDPr12}.
There are various other examples of queueing networks with unusually shaped stability regions. In \cite{MP07} MacPhee {\it et al.} provide an example with a `thick null recurrent set', in \cite{BaBo99} Baccelli and Bonald show that for certain TCP models the stability region is of a fractal nature, and in  \cite{Nazarathy2015} Nazarathy {\it et al.} investigate a case where the stability region is conjectured to consist of disjoint parts.

To avoid determining the stability regions of queueing networks on a case-by-case basis, various general approaches have been proposed. Perhaps the most straightforward among these amounts to determining the invariant measure of the number of customers; when this allows a normalization, then a stationary distribution exists. This approach works for a set of classical models, relying on concepts such as product form and (quasi-)reversibility \cite{Ke79}, but unfortunately not for many (sometimes just slightly more complex) other systems. 

An arguably more robust approach to determining stability is to construct an appropriate Lyapunov function, and then apply the Foster-Lyapunov theorem. Along these lines Tassiulas and Ephremides, for example, use a quadratic Lyapunov function to find a series of policies which are stable in a wide variety of settings \cite{TaEp91}. Constructing an appropriate Lyapunov function is often specific to the application at hand, but the approach can be simplified by studying the fluid model associated with the queueing network. Such a fluid approach was first described by Rybko and Stolyar \cite{RySt93} and was developed in a general form by Dai \cite{Dai95}; a textbook treatment is presented in Bramson \cite{Br08}. Importantly, for specific models this approach can help determine the conditions under which there is stability, but it does not instantly provide a stability condition for a given network at hand. Thus far, no general framework has been developed that is capable of deciding whether a given Markov chain is stable or not. 

The objective of this paper is to develop a general simulation based approach to determining when a given Markov chain should be classified as unstable. A first paper to consider this approach is \cite{wieland2003queueing}. Then, concurrently to our work, \cite{LeMan} proposes a simulation based method for determining the stability region of a multiclass queueing network with respect to its arrival rate, when it is possible to verify that the stability region satisfies particular stochastic monotonicity properties. Given the variety of models and counter-examples discussed above, we place importance on the generality of settings to which our algorithm is suitable. 
To apply our algorithm we do not place structural assumptions on the stability region, we do not restrict parameters of interest, and we do not restrict the mechanism from which the simulations are derived. We focus on providing theoretical guarantees on the performance of our method for a broad class of models where simulations can exhibit either positive or negative drift.

In particular, rather than specific parameter choices, we are interested in the stability classification of parameter {\it sets}. The distinguishing features are: (i)~that the methodology can be easily used for a relatively broad class of systems, and (ii)~that the technique is based on Monte Carlo simulation. 
Clearly, it is straightforward to develop a simulation-based method that can speculatively test stability for a single parameter setting. It is nevertheless far from obvious how an algorithm should be set up that can identify whether a system is unstable for any of the parameter values within a given set. 

This paper resolves this issue by proposing a {\it simulated annealing} \cite{Kirkpatrick1983} based algorithm that systematically searches the parameter set, and determines whether it contains a subset of positive measure consisting only of unstable parameter values. Instead of having to perform a series of simulations to answer the stability identification question, our algorithm performs a single simulation run of a process that encompasses both the queueing network and the parameter set, which is provably capable of finding positive measurable subsets of unstable parameters.  That is, the output of our algorithm is a statistical statement that provides explicit asymptotic performance guarantees. We view our work as a substantive pioneering study on the {\it simulation based} computation of the stability region of Markov chains. 

The framework we propose has the major advantages over existing ones of being broadly applicable and relying only on mild modeling assumptions. Our method provably provides the correct outcome if the Markov chain has bounded increments. Another significant advantage of the approach is that the annealing algorithm can essentially be performed separately from the simulation of the queueing network; as a consequence, the program can be organized with an inner loop (simulating the queueing network with given parameter values using a rather complex simulator) and an outer loop (simulating the annealing step). It thus enables us to computationally determine the stability region for (i)~relatively straightforward models with non standard features for which this has not been identified in closed form, but also for (ii)~larger, realistic models capturing application-specific details. 

We now proceed by providing an informal description of the setting we consider as well as our algorithm.
The key object in this paper is the collection of Markov chains \[\big((X^{(\lambda)}_k)_{k\ge 0} : \lambda\in\mathcal L\big),\] each of them evolving on the state space $\mathcal X$, where $\mathcal L \subset\mathbb R^I$ is a set of parameter values.
For instance, $\lambda \in \mathcal L$ could parametrize the arrival rates of a queueing network consisting of $I$ queues.
Our algorithm detects if there is a subset $\bar{\mathcal L} \subset \mathcal L$ of positive measure for which the Markov chains  $(X^{(\lambda)} : \lambda\in\bar{\mathcal L})$ are unstable. 
Importantly, for reasons that will become clear, we use a definition of `stability' that differs slightly from those in common use.
We essentially define stability of an individual chain through a Lyapunov drift condition imposed on a function $f$. For example, $f(x)$ could give the total number of jobs in the queueing network when it is in state $x$. Informally speaking, if for $f$ the process $f(X^{(\lambda)})$ has negative drift above some finite threshold, then we call $X^{(\lambda)}$ $f$-stable. Alternatively, if $f(X^{(\lambda)})$ has positive drift above some finite threshold, then we call $X^{(\lambda)}$ $f$-unstable. If there exists a  $\bar{\mathcal L} \subset \mathcal L$ of positive Lebesgue measure such that $X^{(\lambda)}$ is $f$-unstable for all $\lambda \in \bar{\mathcal L}$, then we call $\mathcal L$ $f$-unstable, and otherwise we call $\mathcal L$ stable. 

The main idea behind the algorithm is that it generates a discrete time process with state space $({\mathcal X},\,{\mathcal L})$. Given an initial state $(x,\,\lambda)$,  a new parameter proposal $\gamma$ is chosen uniformly from $\mathcal L$.  The Markov chain $X^{(\gamma)}$ then evolves for $\tau(x)$ time units starting from initial state $x$. In our implementation $\tau(x)$ is chosen to be proportional to $f(x)$. Denoting $X^{(\gamma)}_{\tau(x)} =: y$, the next state of the bivariate process is subsequently chosen by comparing the proposed state $(y,\,\gamma)$ with the current state $(x,\,\lambda)$ according to the Metropolis rule:
\begin{equation}\label{Iteratey}
(x',\,\lambda') = 
\begin{cases}
(y,\,\gamma) & \text{with probability}\quad \exp(\eta\, [f(y) -f(x)]_-),\\
(x,\,\lambda)& \text{otherwise.}
\end{cases}
\end{equation}
Here $[z]_- := \min\{0,\,z\}$ and $\eta$ is a  positive tuning parameter for the algorithm. 

The above iteration is motivated by the simulated annealing algorithm initially proposed by Kirkpatrick, Gelatt, and Vecchi \cite{Kirkpatrick1983}. The main advantage of this type of update is the relative generality of optimization problems that it can provably handle, while still being superior to exhaustive search methods.  A key distinction between our method and the typical implementation of simulated annealing is that our cooling schedule is achieved using a combination of the fixed parameter $\eta$ and the parameter $\tau(x)$, that varies with the state of the Markov chain. 

In addition to the global search algorithm just described, we will also study a local search version. This version is different in two respects. Firstly, the new parameter proposal $\gamma$ is sampled uniformly from the neighborhood of the current parameter, a set we denote by $\mathcal{B}_\lambda$. Secondly, we allow $X^{(\lambda)}$ and $X^{(\gamma)}$ to evolve for $\tau(x)$ time units starting from initial state $x$, let $X^{(\lambda)}_{\tau(x)} := y'$ and then apply the above Metropolis rule with $x$ replaced by $y'$. Our key theorems apply to both versions of the algorithm and we explore differences in performance of the two methods through examples.

After having pointed out how the algorithm works, we now provide results that separate its sample paths into either the stable or unstable regimes. 
Let $S_k=(Y_k,\,\Lambda_k)$ be the state of the bivariate process achieved after $k$ iterations of the above rule \eqref{Iteratey} and let $T_k$ be the total amount of time that the algorithm has run for by the $k$th iteration.
The first main theoretical contribution of this paper, later stated formally in Theorem~\ref{thm:stable}, shows under mild conditions on $X$, that if there does {\it not} exist a subset of unstable parameters $\bar{\mathcal L} \subset \mathcal L$ with positive Lebesgue measure, then almost surely
\begin{equation}\label{Bd2}
\lim_{k\rightarrow\infty} \frac{f(Y_k)}{T_k} = 0\,.
\end{equation}
Importantly, the second main theoretical contribution of this paper, later stated formally in Theorem~\ref{thm:unstable}, shows that there exists a true `dichotomy' since if there does exist an unstable set of parameters $\bar{\mathcal L} \subset \mathcal L$ with positive Lebesgue measure, then the process $(Y_k : k\in\mathbb Z_+)$ diverges, in the sense that, almost surely
\begin{equation}\label{Bd1}
\liminf_{k\rightarrow \infty} \frac{f(Y_k)}{T_k} > 0\,.
\end{equation}

The bound \eqref{Bd1} relies on a martingale argument in combination with an application of the Azuma--Hoeffding inequality. The bound \eqref{Bd2} is proven by a coupling argument: as it turns out, in the stable situation the process $f(Y)$ can be majorized by a Markov chain $(W_k: k\in\mathbb Z_+)$ that has an asymptotic drift of zero. An advantage of the coupling approach used to prove \eqref{Bd2} is that the Markov process $W$ is easily simulated, and can therefore be used to provide probabilistic bounds on the likelihood of instability. We use this approach to perform rigorous statistical tests for instability of the underlying set $\mathcal L$. There are many potential approaches to the adaptation of our theoretical results to a practical test for instability.  The approach we suggest is to compare $f(Y_k)$ with the quantiles of $W_k$. Specifically, we let the process $f(Y)$ evolve according to the rule given in \eqref{Iteratey} until the total number of steps in $\mathcal X$ taken between steps of $Y$ exceeds some predetermined level $k^*$, at which point we compare $f(Y_k)$ with the $1-\alpha$ quantile of $W_k$, with $\alpha$ being the desired confidence level. If $f(Y_k)$ exceeds this quantile then we obtain a strong rigorous statistical statement of instability, whereas otherwise we fail to reject the `null hypothesis' of stability. 

We provide five example applications of our algorithm. We apply it to a system consisting of a set of parallel queues studied by Tassiulas and Ephremides in \cite{Tassiulas1993}, a tandem queueing system, the celebrated Rybko--Stolyar network \cite{RySt93}, a network of input queued switches studied by Andrews and Zhang in \cite{AndrewsZhang2003}, and a broken diamond random access network (RAN) recently studied by Ghaderi et al.\ in \cite{Ghaderi2014}. 

Since the stability region is well known for the parallel and tandem systems, these are ideal examples on which to verify that the algorithm performs as expected. We use the tandem system to show that although our results are in a discrete time setting, we are still able to effectively study continuous time systems using a jump chain associated with the process. Additionally, this network also highlights that we are able to test multidimensional parameter sets for instability, and suggests that we are able to relax the Markov assumption. The Rybko--Stolyar network is a popular example of a system with oscillating queue sizes.  Not only does our analysis confirm existing theoretical results that give sufficient conditions for stability of this system, it also provides a statistically rigorous statement that these conditions are also necessary. The network of input queued switches allows us to show that our algorithm provides interesting results for systems with high dimensional state spaces and complex dynamics. In addition, we are able to use our methodology to show that this is an example of a system where the longest queue first policy is not maximally stable. Our final example, the RAN of Ghaderi et al., is currently a hot topic of research in the applied probability community. This system exhibits oscillatory queue size sample path behavior reminiscent of the Rybko--Stolyar network, but in a higher dimensional setting. We are able to expand on the theoretical results of \cite{Ghaderi2014} by providing more specific (statistical) information about which parameter sets are unstable. Throughout this section results are given in terms of both the global and local versions of the algorithm. For some of the models (parallel, tandem, Rybko--Stolyar), the local algorithm appears to perform better, while for others (switches, RAN) the global algorithm appears to be superior.

The remainder of this paper is structured as follows. In Section~\ref{Sec2} we give a formal description of our framework and the assumptions imposed.  Section~\ref{Sec3} presents the algorithm and states our main results, i.e., Theorem~\ref{thm:unstable} and Theorem~\ref{thm:stable}.  In Section~\ref{Proofs} detailed proofs are given (of our main results, propositions, and lemmas). We then provide a range of case studies in Section~\ref{Cases} that demonstrate the algorithm's potential. Section~\ref{Disc} presents concluding remarks as well as an outlook on future research. 

\section{Framework}\label{Sec2}
In this section we present the setup considered in the paper.
The object of study is the irreducible Markov chain $X^{(\lambda)}$ that is parametrized by parameter $\lambda\in\mathcal L$; these parameters can, for example, be thought of as the arrival or service rates in a queueing network. It is assumed throughout that ${\mathcal L}$ is a closed subset of ${\mathbb R}_+^I$, for some $I\in{\mathbb N}$, with finite positive Lebesgue measure (which is denoted $|{\mathcal L}|$). The Markov chain, which may represent the evolution of the population of a queueing network, attains values in ${\mathcal X}:={\mathbb Z}_+^J := \{0, 1, \dots\}^J$, for some $J\in{\mathbb N}$. 

As pointed out in the introduction, the main goal of this paper is to devise a procedure that identifies if a parameter set contains any unstable parameters. Put more precisely, the algorithm verifies  whether or not there is a subset $\bar{\mathcal L}$ of ${\mathcal L}$ such that for all $\lambda \in{\bar{\mathcal L}}$  the associated Markov chain is \textit{unstable}.

Further, for each $\lambda\in \mathcal L$, we let ${\mathcal B}_\lambda$ be the neighborhood of $\lambda$. As is commonly assumed in local search algorithms, we assume that $\lambda\in \mathcal B_\lambda$ and for any $\lambda_1, \lambda_n\in{\mathcal L}$ there is a sequence of neighborhoods with $\lambda_{k+1}\in {\mathcal B}_{k}$ for $k=1,...,n-1$. 

We will work extensively with a Lyapunov function that maps the state of the Markov chain to a nonnegative real number, that is a monotone function $f:{\mathcal X}\rightarrow [0, \infty)$. In our queueing network example, $f(x)$ could represent the \textit{sum} of the queue sizes within the network (that is, the total network population). 
We assume that $f$ is unbounded in the sense that
\[
\liminf_{|x|\rightarrow \infty} f(x) = \infty\,.
\]
It is assumed throughout that for all $\lambda$ the process $f(X^{(\lambda)})$ has bounded increments, implying there exists a constant $\phi_{f} > 0$ such that
\begin{equation}\label{phif}
\left| f(X^{(\lambda)}_{k+1}) - f(X^{(\lambda)}_k)\right| \leq \phi_{f}\,. 
\end{equation} 

We now provide the formal definitions of stability and instability, as used in this paper. 
\begin{definition}\label{StableDef}
Given $f$, we say that the set of parameters $\mathcal{L}$ is \emph{$f$-stable} if  there exists $\delta>0$, $\sigma >0$, and $\kappa>0$ such that
\begin{equation}\label{stable}
{\mathbb E}\left[\left.f(X^{(\lambda)}_k)-f(X^{(\lambda)}_0)\,\right|\,X^{(\lambda)}_0=x\right]\leq -\delta\,\sigma
\end{equation}
for all $x$ such that $|x|\geq \kappa$, for all $\lambda \in \mathcal{L}$, and all $k\ge \sigma$.

Similarly, we say that the set of parameters $\mathcal{L}$ is \emph{$f$-unstable} if  there exists a set $\bar{\mathcal L} \subset {\mathcal L}$ of positive measure, $\delta>0$, $\sigma>0$, and $\kappa>0$ such that
\begin{equation}\label{unstable}
{\mathbb E}\left[\left.f(X^{(\lambda)}_k)-f(X^{(\lambda)}_0)\,\right|\,X^{(\lambda)}_0=x\right]\ge \delta\,\sigma
\end{equation}
for all $x$ such that $|x|\geq \kappa$, for all $\lambda \in \bar{\mathcal{L}}$, and all $k\ge \sigma$.
\end{definition}
For a given value of $\lambda$, the conditions \eqref{stable} and \eqref{unstable} are Lyapunov conditions for which one can obtain positive recurrence or transience of the Markov chain $X^{(\lambda)}$, respectively (see for instance \cite{Hairer2010}).  
We remark that a countable state space Markov chain is positive recurrent if and only if there exists a Lyapunov function $f$ for which is $f$-stable, see Meyn and Tweedie \cite[Theorem 11.0.1]{meyn2012markov}.  In our definition of $f$-stable we consider over \emph{set} of Markov chains for which the same choice of $f$ provides positive recurrent.
In this sense, the definitions of `stable' and `unstable' then ask whether or not the Markov chains $X^{(\lambda)}$ are positive recurrent for parameters $\lambda$ in ${\mathcal L}$. 

We further remark that if a fluid limit, $\bar{f}(\bar{X}^{(\lambda)})$, exists for each (rescaled) process, 
\[
\left(\frac{f(X^{(\lambda)}_{\lfloor kt\rfloor})}{k} : t\geq 0\right)\,,\qquad\lambda \in \mathcal L\,,
\]
then the above conditions \eqref{stable} and \eqref{unstable} imply that 
\[
\frac{\d \bar{f}(\bar{X}^{(\lambda)}(t))}{\d t} \leq -\delta \qquad\text{and}\qquad \frac{\d \bar{f}(\bar{X}^{(\lambda)}(t))}{\d t} \geq \delta 
\]
for $\bar{X}^{(\lambda)}(t) > 0$. In other words, \eqref{stable} and \eqref{unstable} respectively imply fluid stability and fluid instability (see for instance \cite{Br94}). In general, fluid stability and instability are not equivalent to the positive recurrence and transience of an underlying Markov process. Nevertheless, the Lyapunov analysis of fluid models remains one of the most widely deployed and established devices used to determine the positive recurrence and transience of Markov processes. Similarly, our work provides a broadly applicable technique that may be used to determine the positive recurrence and transience of families of Markov processes.

Now that we have introduced our framework, the next section describes our algorithm, as well as the main results upon which the algorithm is based.

\section{Implementation and main results}\label{Sec3}
In this section we explicitly give our algorithm and provide a detailed discussion of the choices underlying it. We then give in Theorem~\ref{thm:stable} and Theorem~\ref{thm:unstable} our main theoretical contribution. We follow this up with a suggested method of using our results to implement actual tests for instability. 

\subsection{Algorithm}
We now describe our algorithm for identifying whether a parameter set is unstable. 
Our approach is based on the principle of searching the relevant parameter set for a parameter choice that maximizes the drift of the Markov process under consideration. As such, well known optimization algorithms provide an ideal source of inspiration for potential methods. As previously mentioned, the approach taken in this paper is based on the well known simulated annealing optimization algorithm. While many other optimization techniques have found acceptance through testing against well known `hard' problems, the simulated annealing algorithm has shown itself to be amenable to rigorous results on performance guarantees. 

In this section we provide a detailed description of our algorithm, and through the use of two theorems provide guarantees on its asymptotic performance. A key advantage of this strong theoretical grounding is that the machinery used to provide these theoretical guarantees also allows us to develop a hypothesis test that outputs a statistical statement of whether or not a Markov chain is stable given a particular parameter set. In this section we also include an illustration of the algorithm and its output in the context of a single server discrete time queueing system. In later sections we demonstrate the algorithm's potential through a series of experiments concerning more complex systems. 

We assume that $\tau(x)=c\,f(x)+d$, where $c,\,d \in (0, \infty)$ are chosen by the algorithm's user. Note that this implies $\tau(x)\rightarrow \infty$ as $|x|\rightarrow \infty$ and that $\tau$ has bounded increments. Finally, let $T_k$ give the time that our chain has been running for at the $k$-th step, that is,
\[
T_k = \sum_{i=0}^{k-1} \tau(Y_i)\,.
\]

We now have all of the machinery needed to give both versions of our algorithm. 
\begin{algorithm}\label{ALG1}
Global search algorithm: \\
{\tt
Initialize: Set $k =1$, $T_0 = 0$, choose $Y_0$ from $\mathcal X$, and $\Lambda_0$ from $\mathcal L$.
\begin{enumerate}[(i)]
\item For $x = Y_{k-1}$ and $\tau = \tau(Y_{k-1})$, set  $T_k = T_{k-1} + \tau$ and sample $\gamma \sim {\sf Uniform}(\mathcal L)$.
\item Sample $y=X^{(\gamma)}_{\tau}$ conditional on $X^{(\gamma)}_0 = x$.
\item For  $\lambda = \Lambda_{k-1}$, set
\begin{align}\label{ChangeRule2}
(Y_k\,,\Lambda_k) = 
\begin{cases}
(y,\,\gamma) & \text{with probability}\quad \e^{\eta\, [f(y) -f(x)]_-},\\
(x,\,\lambda)& \text{otherwise.}
\end{cases}
\end{align}
\item If \emph{stopping condition} is met, then stop, else set $k=k+1$ and return to (i). 
\end{enumerate}}
\end{algorithm}
As outlined in the introduction, each step of the global search algorithm compares $x$, as sampled in the previous step, with a new value $y$, sampled using a uniformly at random selected parameter $\gamma$ from $\mathcal L$ with runtime $\tau(x)$ and initial state $x$. The state is then updated according to the Metropolis rule \eqref{ChangeRule2}.

Recalling from Section~\ref{Sec2} that $\mathcal B_\lambda$ is a neighborhood of $\lambda$ in $\mathcal L$,  the local search version operates as follows.

\begin{algorithm}\label{ALG3}
Local search algorithm: \\
{\tt
Initialize: Set $k =1$, $T_0 = 0$, choose $Y_0$ from $\mathcal X$, and $\Lambda_0$ from $\mathcal L$.
\begin{enumerate}[(i)]
\item For $x = Y_{k-1}$, $\lambda = \Lambda_{k-1}$ and $\tau= \tau(Y_{k-1})$, set  $T_k = T_{k-1} + \tau $ and sample $\gamma \sim {\sf Uniform}(\mathcal B_\lambda)$.
\item Sample  $x' = X^{(\lambda)}_{\tau}$ conditional on $X^{(\lambda)}_0 = x$.
\item Sample $y = X^{(\gamma)}_{\tau}$ conditional on $X^{(\gamma)}_0 = x$.
\item Set
\begin{align}\label{LocalChangeRule}
(Y_k\,,\Lambda_k) = 
\begin{cases}
(y,\,\gamma) & \text{with probability}\quad \e^{\eta\, [f(y) -f(x')]_-},\\
(x',\,\lambda)& \text{otherwise.}
\end{cases}
\end{align}
\item If \emph{stopping condition} is met, then stop, else set $k=k+1$ and return to (i). 
\end{enumerate}}
\end{algorithm}
The local search algorithm compares states $(x',\,\lambda)$ and $(y,\,\gamma)$ where $x'$ is sampled by running the current parameter $\lambda$ for a further $\tau$ steps and $(y,\,\gamma)$ is sampled by running a neighboring parameter $\gamma \in \mathcal B _\lambda$ for the same number of steps. These states are then compared according to the Metropolis rule \eqref{LocalChangeRule}.

As we will discuss in more detail, the relative performance of the global search and local search differ depending on the model and setting to which they apply. Under general modeling assumptions both algorithms converge to a behavior that only accept unstable parameters in $\mathcal L$. The Global Search Algorithm proposes parameters uniformly at random and thus asymptotically will only accept parameters uniformly at random in the unstable set $\bar{\mathcal L}$. This is useful if one wants to identify the region of instability, in addition to determining if instability occurs. The Local Search Algorithm proposes two neighboring parameters and compares them simultaneously. In this way the Local Search Algorithm applies a hill-climbing heuristic. In this sense it is more aggressive in approaching regions of instability, but will not identify the entire unstable region.

When analyzing Algorithm~\ref{ALG1}, we assume that $\mathcal L$ is a general measurable set and that $\bar{ \mathcal L}$ is a set with positive Lebesgue measure, while for the local search Algorithm~\ref{ALG3} we place some restrictions. We assume the following:
\begin{assumption}\label{LocalAssump}
When analyzing the local search algorithm we assume that $\mathcal L$ is a finite set where for each $\mathcal L ' \subset \mathcal L$ either $\mathcal L'$ is unstable or $\mathcal L'$ is stable, according to Definition~\ref{StableDef}. Further, we assume that there exists a state $x_0$ where
\begin{equation}\label{Reduce}
 \mathbb P ( X_1^{( \lambda)} =x_0\,|\,X_0^{( \lambda)} =x_0 ) > 0\,.
\end{equation}
\end{assumption}
In a queueing setting $x_0$ may, for example, correspond to a state where all queues are idle.

An important feature of our work is that we do not place structural conditions on $\mathcal L$ such as convexity or monotonicity. Since examples of Markov processes violating these conditions frequently occur in both theory and practice, by avoiding such conditions our work is widely applicable. Another key feature is that we do not assume knowledge of the process generating each sample path is available, we only require samples of the state description in response to parameter choices. This further extends the set of models that may be analyzed using our method, since practical simulators (although Markovian) are often not generated from a simple closed form Markovian descriptor (transition matrix or infinitesimal generator), but rather come in the form of a `black box' that provides outputs in response to parameter inputs. 

Note that we have not provided an explicit stopping condition for the algorithm yet. Since our results are asymptotic in the number of steps $k$, it may be sensible to run the algorithm until some large $k$, chosen based on CPU time limitations. An alternative may be to dictate a particular total budget of time that the algorithm may evolve in $\mathcal X$. To do this, choose a $k^*$ and run the algorithm until $T_k > k^*$. In either case it may not be obvious whether the sample path belongs to the stable or unstable regimes, an issue that we address with a test for instability in Section~\ref{Test}. 

We now briefly address some of the choices we made in the design of the algorithm. 
Firstly, note that the $\tau$ function we introduced has replaced the cooling schedule from the traditional simulated annealing algorithm. The functional form of $\tau$ ensures that when $Y_k$ is large the subsequent $\Lambda_{k+1}$ proposal is given an increased opportunity to demonstrate that it has higher drift. Since a large $Y_k$ hints that an unstable parameter choice has been recently chosen this helps to ensure that CPU budget is expended comparing parameter choices which appear to be unstable. 

The conditioning in step (ii) of the algorithm on $X^{(\gamma)}_0 = x$, rather than starting each new sample from $X^{(\gamma)}_0$ equal to zero, is intentionally designed to allow the system to build up to a size where instability properties become evident. That is, as per Definition~\ref{unstable}, the drift properties we are seeking only become evident after $|x| > \kappa$ has occurred. Forcing the system to reach $|x| > \kappa$ in a single $X^{(\gamma)}_{\tau(x)}$ sample may result in the algorithm inefficiently repeating `burn in' time. 

\subsection{Main results}
Our main theoretical contributions are Theorem \ref{thm:unstable} and Theorem \ref{thm:stable} below. These demonstrate the stability of a set $\mathcal L$ can be summarized asymptotic sample path behavior of the process $f(Y)/T$. In essence the stability of a parameterized family of Markov processes can be summarized by the stability of a single Markov process, as generated by Algorithm~\ref{ALG1} or Algorithm~\ref{ALG3}. 

The main results of this paper are as follows:
\begin{theorem} \label{thm:stable}
If the set $\mathcal{L}$ is stable then, almost surely,
\begin{equation} 
\lim_{k\to\infty}\frac{f(Y_{k})}{T_k}=0\,.\label{liminfFstable}
\end{equation}\end{theorem}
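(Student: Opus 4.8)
The plan is to show that, in the stable regime, the (non-Markov) scalar process $f(Y_k)$ has a drift that \emph{vanishes} as its value grows, to encode this through a dominating Markov chain $W_k$ with asymptotic drift zero, and then to read off \eqref{liminfFstable} from a Foster--Lyapunov argument applied to the test process $D_k := f(Y_k) - \varepsilon T_k$. The first, preliminary, observation is that stability of $\mathcal L$ can be rephrased as a uniform drift estimate on the inner chains: since $\{x:|x|<\kappa\}$ is finite and $f$ is real-valued, Definition~\ref{StableDef} furnishes a threshold $M$ with the property that $f(x)\ge M$ forces both $|x|\ge\kappa$ and $\tau(x)=cf(x)+d\ge\sigma$; hence \eqref{stable} gives, uniformly over $\lambda\in\mathcal L$ and over every block of $\sigma$ consecutive steps of $X^{(\lambda)}$ whose starting state has $f\ge M$, a drift of at most $-\delta\sigma$, while \eqref{phif} bounds the per-step increment of $f(X^{(\lambda)})$ by $\phi_f$.

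\textbf{The drift of $f(Y_k)$ vanishes at infinity.} Write $x=Y_{k-1}$ and $y=X^{(\gamma)}_{\tau(x)}$. Because the Metropolis rule \eqref{ChangeRule2} accepts every proposal with $f(y)\ge f(x)$ and otherwise leaves $f$ unchanged, we have the pathwise bound $f(Y_k)-f(Y_{k-1})\le (f(y)-f(x))^{+}$; moreover, since a decrease of size $r$ is accepted with probability $e^{-\eta r}$, the downward increment $(f(Y_{k-1})-f(Y_k))^{+}$ has an exponential tail with rate $\eta$, uniformly in everything. It therefore remains to control $\mathbb E\big[(f(y)-f(x))^{+}\mid\mathcal F_{k-1}\big]$ for $f(x)\ge M$, and here I would combine two ingredients: (i) a Hajek-type exponential supermartingale argument on the $\sigma$-blocked inner chain (negative block drift plus bounded block increments) yielding $\mathbb P\big((f(y)-f(x))^{+}>L\mid\mathcal F_{k-1}\big)\le Ce^{-\theta L}$ with $C,\theta$ independent of $x$, of $\gamma$, and of the run length; and (ii) the fact that the run length $\tau(x)=\Theta(f(x))\to\infty$ while the negative drift accumulated before the inner chain can descend below level $M$ is itself of order $f(x)$, so the endpoint $f(y)$ lies below the (large) starting value $f(x)$ with probability tending to one. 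Combining (i) and (ii) produces a function $\bar g$ with $0\le\bar g\le C'$ and $\bar g(v)\to0$ as $v\to\infty$ such that $\mathbb E[f(Y_k)-f(Y_{k-1})\mid\mathcal F_{k-1}]\le \bar g(f(Y_{k-1}))$, whereas for $f(Y_{k-1})<M$ the pathwise bound gives $f(Y_k)\le M+\phi_f(cM+d)$. These uniform estimates are precisely what is needed to construct, by a routine stochastic-domination argument, a Markov chain $W_k\ge f(Y_k)$ with asymptotic drift zero. It is in ingredient (ii) that the unboundedness of $\tau$ is essential: with $\tau$ constant the drift of $f(Y_k)$ would stay bounded away from $0$ and \eqref{liminfFstable} would be false.

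\textbf{Conclusion.} Fix $\varepsilon>0$ and put $D_k:=f(Y_k)-\varepsilon T_k$. Using $T_k-T_{k-1}=cf(Y_{k-1})+d$ together with the previous step,
\[
\mathbb E[D_k-D_{k-1}\mid\mathcal F_{k-1}]\ \le\ \bar g\big(f(Y_{k-1})\big)-\varepsilon c\,f(Y_{k-1})-\varepsilon d,
\]
which, since $\bar g(v)=o(v)$, is at most $-\mu<0$ as soon as $f(Y_{k-1})$ — hence $D_{k-1}$, because $D_{k-1}\le f(Y_{k-1})$ — exceeds a level $v^{\star}=v^{\star}(\varepsilon)$. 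Combined with the uniform exponential upper tails on the increments of $f(Y_k)$ (and hence of $D_k$), this negative drift above a level forces $\limsup_k D_k<\infty$ almost surely by a standard recurrence/drift argument (negative drift plus light-tailed upward jumps is bounded above; cf.\ Hajek's drift lemma and \cite[Ch.~11]{meyn2012markov}). Hence $f(Y_k)\le \varepsilon T_k+O(1)$ a.s., and since $T_k\ge dk\to\infty$ we obtain $\limsup_k f(Y_k)/T_k\le\varepsilon$; intersecting the almost sure events over $\varepsilon=1/n$ and using $f(Y_k)/T_k\ge0$ yields \eqref{liminfFstable}. For the local-search algorithm the argument is identical: in \eqref{LocalChangeRule} the alternative state $x'=X^{(\lambda)}_{\tau(x)}$ is also the endpoint of a negative-drift run from $x$, so $\mathbb E[(f(x')-f(x))^{+}\mid\mathcal F_{k-1}]\to0$ in the same way, and Assumption~\ref{LocalAssump} makes the finitely many per-parameter drift estimates uniform.

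\textbf{Main obstacle.} The crux is ingredient (ii) above: showing that the drift of $f(Y_k)$ genuinely \emph{tends to $0$}, rather than merely staying bounded, as $f(Y_k)\to\infty$. The Metropolis rule is of no help here, since it rejects precisely the favourable (downward) proposals, so the decay must come entirely from the inner dynamics; and the drift hypothesis \eqref{stable} is available only over $\sigma$-step blocks and only above the $\kappa$-threshold, so one must control the excursions of the inner chain below that threshold over a run of length $\Theta(f(x))$, all the while keeping every constant uniform in the starting state and the run length so that the exponential-tail bound of ingredient (i) can be applied.
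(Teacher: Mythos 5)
Your overall strategy is essentially the paper's. The crux you correctly isolate --- that the drift of $f(Y_k)$ must be shown to \emph{vanish} as $f(Y_k)\to\infty$, and that this must be extracted from the $\sigma$-blocked drift condition \eqref{stable} while controlling excursions below the $\kappa$-threshold, uniformly in the starting state and the run length $\tau(x)=\Theta(f(x))$ --- is exactly what the paper's Lemma~\ref{hoefd} does: it applies Azuma--Hoeffding to the $\sigma$-blocked inner chain, splits on whether the path dips below $\kappa$, and union-bounds over the at most $n(w)$ excursions, yielding an explicit dominating increment variable $Z(w)$ with $\mathbb{E}Z(w)\to0$ and uniform $L^2$ bounds (Lemma~\ref{LemmaZ}); the observation that the Metropolis rule only helps (upward proposals accepted, downward ones at worst rejected) and the construction of the dominating chain $W$ are Proposition~\ref{Coupling}. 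You leave these two ingredients as a plan rather than a proof, but the plan is the right one. Where you genuinely diverge is the conclusion: the paper writes $W_k$ as an $L^2$ martingale plus the compensator $\sum_n\mathbb{E}[Z(W_{n-1})\mid W_{n-1}]$, invokes the strong law for $L^2$ martingales to get $M_k/k\to0$, and derives $W_k/k\to0$ by contradiction; you instead run a Foster--Lyapunov/Hajek argument on $D_k=f(Y_k)-\varepsilon T_k$ and intersect over $\varepsilon=1/n$. Your route avoids the coupling-plus-martingale-SLLN machinery at the cost of needing geometric drift conditions; the paper's route needs only first and second moments of $Z(w)$.

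There is, however, a genuine flaw in your concluding step as stated. The principle you invoke --- that negative drift above a level plus exponentially light upward jumps forces $\limsup_k D_k<\infty$ almost surely --- is false: a positive-recurrent reflected random walk with bounded increments and negative drift above $0$ satisfies both hypotheses yet has $\limsup_k D_k=+\infty$ a.s. What Hajek's lemma actually delivers (after verifying its geometric-drift condition, which here requires handling the unbounded deterministic downward shift $-\varepsilon(c f(Y_{k-1})+d)$ in the increment of $D_k$, e.g.\ by truncating the increment from below at a fixed $-B$ before taking exponential moments --- a point you do not address) is $\sup_k\mathbb{E}[\e^{sD_k}]<\infty$ for some $s>0$. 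That is still enough: Chebyshev and Borel--Cantelli give $D_k\le (2/s)\log k$ eventually a.s., and since $T_k\ge dk$ this yields $f(Y_k)\le\varepsilon T_k+O(\log k)$ and hence $\limsup_k f(Y_k)/T_k\le\varepsilon$. So the final step is repairable, but not by the boundedness claim you wrote down.
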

 Theorem~\ref{thm:stable} shows that when $\mathcal{L}$ is stable, the sample path of $f(Y)/T$ converges to 0. 
 \begin{theorem} \label{thm:unstable}
If the set $\mathcal{L}$ is unstable then, almost surely,
\begin{equation} 
\liminf_{k\to\infty}\frac{f(Y_{k})}{T_k}>0\,.\label{liminfFunstable}
\end{equation}
\end{theorem}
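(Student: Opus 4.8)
The plan is to show that, in the unstable case, $f(Y_k)$ grows at a rate comparable with $T_k$ by exploiting two facts: the Metropolis rule \eqref{ChangeRule2} is strongly biased against decreasing $f(Y)$, and, since $\tau(x)=cf(x)+d$, a single step of the algorithm run with a parameter from $\bar{\mathcal L}$ produces an expected upward displacement of order $\tau(x)$ once $f(x)$ is large. I would let $\mathcal F_k$ denote the natural filtration of $(S_k)$, write $\Delta_k=f(Y_k)-f(Y_{k-1})$, and note $\tau(Y_{k-1})=T_k-T_{k-1}=cf(Y_{k-1})+d$. Two elementary observations about \eqref{ChangeRule2}/\eqref{LocalChangeRule} are the workhorses: (i) a proposal is accepted with probability $\e^{\eta[f(y)-f(\cdot)]_-}$, so $\mathbb P(\Delta_k\le -t\mid\mathcal F_{k-1})\le \e^{-\eta t}$ for all $t\ge0$ --- the downward fluctuations of $f(Y)$ have a universal exponential tail; and (ii) for real $u$, $u\,\e^{\eta[u]_-}$ equals $u$ if $u\ge0$ and lies in $[-\tfrac1{e\eta},0)$ if $u<0$, so that, conditioning on the proposed endpoints, the expected new value $\mathbb E[f(Y_k)\mid\cdot]$ is at least the maximum of the proposed $f$-value and the competing $f$-value minus $\tfrac1{e\eta}$. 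Together with $|\Delta_k|\le\phi_f\,\tau(Y_{k-1})$ from \eqref{phif} and the unboundedness of $f$ (which lets me fix a height $h$ with $f(x)\ge h\Rightarrow|x|\ge\kappa$ and $\tau(x)\ge\sigma$), these are essentially the only inputs.

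\textbf{Step 1: the drift estimate (the crux).} I will establish that there are constants $h<\infty$ and $\rho>0$ with $\mathbb E[\Delta_k\mid\mathcal F_{k-1}]\ge\rho\,\tau(Y_{k-1})$ whenever $f(Y_{k-1})\ge h$. For Algorithm~\ref{ALG1}, the proposal $\gamma$ lies in $\bar{\mathcal L}$ with the fixed probability $q=|\bar{\mathcal L}|/|\mathcal L|>0$. On that event, if $f(x)\ge h$ is large then by \eqref{phif} the trajectory of $X^{(\gamma)}$ cannot leave $\{|z|\ge\kappa\}$ before roughly $(f(x)-h)/\phi_f$ steps, so provided $\tau(x)=cf(x)+d$ stays below that --- which is where one needs the user's constant $c$ to be small relative to $\phi_f$ --- I can cut the $\tau(x)$-step run into blocks of length $\lceil\sigma\rceil$ and iterate \eqref{unstable} over the blocks to get $\mathbb E[f(X^{(\gamma)}_{\tau(x)})-f(x)\mid X^{(\gamma)}_0=x]\ge\delta'\,\tau(x)$ for a fixed $\delta'>0$. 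Feeding this and observation (ii) into the split over $\{\gamma\in\bar{\mathcal L}\}$ and its complement gives $\mathbb E[\Delta_k\mid\mathcal F_{k-1}]\ge q\delta'\,\tau(Y_{k-1})-\tfrac1{e\eta}$, which exceeds $\tfrac{q\delta'}2\tau(Y_{k-1})$ once $f(Y_{k-1})$ is large; take $\rho=\tfrac{q\delta'}2$, and note this needs no smallness of $\eta$ precisely because $\tau$ grows with $f$. For the local Algorithm~\ref{ALG3} the same estimate holds whenever $\Lambda_{k-1}\in\bar{\mathcal L}$: observation (ii) now compares $f(y)$ with $f(x')$, but using $\mathbb E[\Delta_k\mid\cdot]\ge\mathbb E[\max(f(y),f(x'))\mid\cdot]-f(x)-\tfrac1{e\eta}$ and the fact that at least one of $\gamma,\Lambda_{k-1}$ is in $\bar{\mathcal L}$, the corresponding run contributes drift $\ge\delta'\,\tau(x)$; and that $\Lambda$ reaches and then stays in $\bar{\mathcal L}$ once $f(Y)$ is large follows from Assumption~\ref{LocalAssump} --- connectedness of the neighbourhood graph makes $\bar{\mathcal L}$ reachable, the state $x_0$ of \eqref{Reduce} keeps $f(Y)$ from being trapped while $\Lambda$ random-walks there, and the ``every subset is stable or unstable'' dichotomy forces a move out of $\bar{\mathcal L}$ to be a strict decrease of $f$, hence accepted with exponentially small probability.

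\textbf{Step 2: divergence and the ratio.} First $f(Y_k)\to\infty$ a.s.: on $\{\sup_kf(Y_k)<\infty\}$ the chain $S_k$ revisits a finite set infinitely often and from each visit there is a fixed positive chance of a single step that proposes $\gamma\in\bar{\mathcal L}$, climbs above $h$ and is accepted, so the conditional Borel--Cantelli lemma makes this event null. Next, $f(Y_k)-\rho T_k$ is a submartingale on $\{f(Y)\ge h\}$, and combined with the exponential lower-tail bound (i) a gambler's-ruin estimate shows that from height $H$ the chance of ever returning below $h$ is summable in $H$, so $f(Y)$ eventually stays above $h$ and, being a submartingale with drift bounded away from $0$, diverges. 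Finally, working past the a.s.\ finite time after which $f(Y_k)\ge h$ always, set $Q_k=f(Y_k)/T_k$; from $\tau(Y_{k-1})=cf(Y_{k-1})+d$ one gets (a) $\mathbb E[Q_k\mid\mathcal F_{k-1}]\ge Q_{k-1}$ on $\{Q_{k-1}<\rho\}$ by linearity in $\Delta_k$, and (b) the deterministic bounds $\tfrac{(1-c\phi_f)Q_{k-1}}{1+cQ_{k-1}}\le Q_k\le(1+2c\phi_f)Q_{k-1}$, so $\log Q_k$ has increments bounded above and below and a strictly positive drift while $Q_{k-1}<\rho$; a routine Foster--Lyapunov argument (e.g.\ $Q_k^{-\theta}$ is a supermartingale up to a bounded additive perturbation on $\{Q_{k-1}\ge\rho\}$, for small $\theta>0$) then yields $\liminf_kQ_k>0$ a.s., which is \eqref{liminfFunstable}.

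\textbf{The main obstacle} will be Step 1, and inside it the passage from the single-timescale condition \eqref{unstable} --- which promises only a displacement $\ge\delta\sigma$ over runs of length $\ge\sigma$, and nothing at all outside $\{|x|\ge\kappa\}$ --- to a displacement of order $\tau(x)$ over the algorithm's own $\tau(x)$-step run, uniformly in the arbitrary parameter $\gamma\in\bar{\mathcal L}$: the block iteration, controlled by the bounded-decrement property \eqref{phif}, is what bridges this gap and also pins down the admissible range of $c$. For the local algorithm there is the further, somewhat delicate, task of showing the parameter chain reaches $\bar{\mathcal L}$ and is effectively absorbed there once $f(Y)$ is large, which is exactly what Assumption~\ref{LocalAssump} is there to buy.
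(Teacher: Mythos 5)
Your Step 1 reproduces the paper's Lemma~\ref{SubMg} essentially verbatim: the split of the proposal integral over $\bar{\mathcal L}$ versus its complement, the elementary bound $u\,\e^{\eta[u]_-}\ge\max\{u,-(\e\eta)^{-1}\}$, and the conclusion ${\mathbb E}[\Delta_k\mid\mathcal F_{k-1}]\ge a\,\tau(Y_{k-1})$ for $|Y_{k-1}|$ large. (Your block iteration of \eqref{unstable} is a reasonable reading of the definition, but note it quietly imposes $c<1/\phi_f$ so that the run cannot exit $\{|x|\ge\kappa\}$; the theorem assumes no such restriction on $c$.) The divergence/ratio half, however, departs from the paper and contains a genuine gap. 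The paper works with $F_k=f(Y_k)-aT_k$ and proves (Lemma~\ref{Concentrate}) that ${\mathbb E}[\e^{-r(F_k-F_{k-1})}\mid S_{k-1}]<1$ by applying Azuma--Hoeffding to the $\tau_k$ sub-steps of the underlying chain \emph{within} one algorithm step; this controls the lower tail of $\Delta_k$ \emph{below its conditional mean} $a\tau_k$. Optional stopping then gives $\P(\inf_kF_k\ge0)\ge1-\e^{-rK}>0$, and irreducibility plus a Borel--Cantelli restart argument over the stopping times at which $F$ goes negative yields $\liminf_kf(Y_k)/T_k\ge a$. You never invoke any such concentration: the only tail input you use is the Metropolis bound $\P(\Delta_k\le-t\mid\mathcal F_{k-1})\le\e^{-\eta t}$, which says nothing about $\P(\Delta_k\le s)$ for $0\le s<a\tau_k$ --- for instance the rejection event $\{\Delta_k=0\}$ is not small, and on it $Q_k=Q_{k-1}/(1+cQ_{k-1}+d/T_{k-1})$, a constant-factor drop of $Q$.

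This is exactly where your Foster--Lyapunov step breaks. Your inequality ${\mathbb E}[Q_k\mid\mathcal F_{k-1}]\ge Q_{k-1}$ on $\{Q_{k-1}\le\rho\}$ is correct, but it does not transfer to $\log Q_k$ or $Q_k^{-\theta}$: Jensen runs the wrong way, and since the one-step multiplicative range of $Q$ is of constant width (increments of $f(Y)$ are of size up to $\phi_f\tau_k=\Theta(f(Y_{k-1}))$), a chord bound shows that a positive arithmetic drift is perfectly compatible with ${\mathbb E}[\log(Q_k/Q_{k-1})]<0$. To make $\log Q$ have positive drift you would need either concentration of $\Delta_k$ about its mean (i.e.\ Lemma~\ref{Concentrate}, which is the ingredient your sketch omits) or a further smallness condition of the form $c\lesssim\rho/\phi_f^2$ involving the unknown constants $p,\delta,\phi_f$, which the theorem does not assume. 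A secondary, fixable issue: your claim that from a finite set a \emph{single} algorithm step "climbs above $h$" fails for large $h$, because one step changes $f$ by at most $\phi_f\tau(x)$, which is bounded on $\{f\le M\}$; you need the multi-step restart construction (the times $\ell_n,\sigma_n$ in the paper's proof) rather than a one-step escape. I would recommend replacing your Step 2 by the paper's $F_k$/exponential-supermartingale route, keeping your Step 1 as is.
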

Theorem~\ref{thm:unstable} shows that when $\mathcal{L}$ is unstable, the sample path of $f(Y)/T$ eventually never returns to 0. In practical use it is $f(Y_k)/T_k$, for some large $k$, that is observed, rather than its limiting value. It is therefore not possible to directly apply the theorems. Instead, when $f(Y)/T$ appears to converge to 0, the contrapositive of Theorem~\ref{thm:unstable} provides evidence that the parameter set is not unstable. Conversely, when $f(Y)/T$ appears to diverge, converge to a positive constant, or fluctuate within a set that does not contain 0, the contrapositive of Theorem~\ref{thm:stable} provides evidence that the parameter set is not stable. 

We now include a short example to illustrate these theorems. Consider a simple discrete time queueing system where an arrival occurs at the beginning of each time slot with probability $p \in [0,\,1]$, and then subsequently, if the queue is non-empty, a service occurs with probability 0.5. Clearly, so long as the queue is non-empty the expected change in queue size between time periods is $p-0.5$. Hence, for $p<0.5$ the system is $L^1$-stable with $\kappa = \sigma = 1$ and $\delta = 0.5-p$. Figure~\ref{fig:SimpleSamplePaths} illustrates Theorem~\ref{thm:unstable} and Theorem~\ref{thm:stable} using the sample path behavior of $f(Y)/T$ for this simple system. The sample paths corresponding to $p$ sampled from $\mathcal L = [0,\,0.4]$ appear to converge towards 0, providing evidence that this set is not unstable. Similarly, the sample path corresponding to $p$ sampled from $\mathcal L = [0,\, 0.6]$ appears to remain constant at approximately $10^{-2}$ in the global case and appears to diverge in the local case, providing evidence that this set is not stable. 

We remark that the behavior of the unstable sample path substantially differs between the global and local versions of the algorithm. In the global case the unstable sample path quickly separates from the stable sample path and appears to tend towards some constant value. For the local algorithm, however, the stable and unstable sample paths appear highly similar until suddenly the unstable sample path rapidly increases. This suggests that if $n$ is not large enough, the local algorithm may perform very poorly, however for $n$ large it may perform vastly better. 

\begin{figure}[h]
	\centering
	\includegraphics{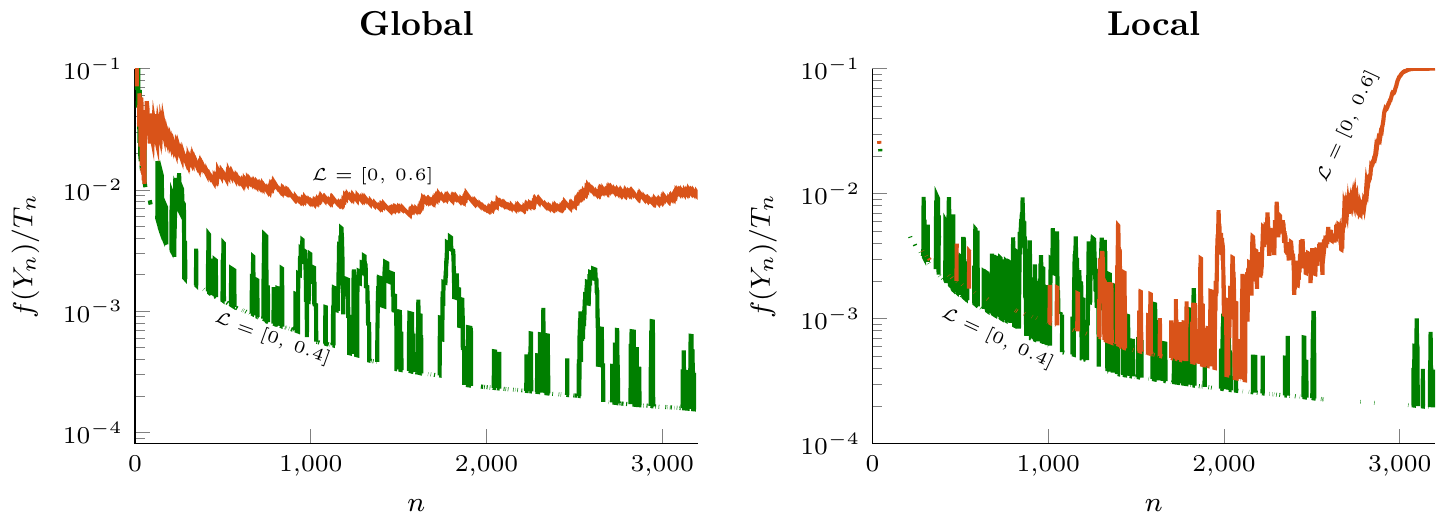}
	\caption{Comparison of $f(Y)/T$ sample paths for stable ($\mathcal L = [0,\,0.4]$) and unstable ($\mathcal L = [0,\,0.6]$) parameter sets when the global and local versions of the algorithm are applied to a simple queue.}
	\label{fig:SimpleSamplePaths}
\end{figure}

It is not necessarily clear in finite time if a sample path of $f(Y)/T$ belongs to the regime of Theorem~\ref{thm:stable} or Theorem~\ref{thm:unstable}. In the next subsection we address this issue by presenting a test for instability. 

\subsection{A test for instability}\label{Test}
We  now provide a method to test, statistically, whether or not a parameter set is unstable. 
Here one could consider a null-hypothesis which states that the parameter set is stable for some given $\delta$, cf. \eqref{stable}. 
Given this and the simulated model, we can construct a closed form family of random variables $Z(w)$, $w\geq 0$ (given by Lemma~\ref{hoefd} in Section~\ref{STAB}) such that $Z(w)$ stochastically majorizes the increments of $f(Y_k)$. 
With this choice of $Z(w)$, we can then define the Markov chain $(W_k : k\in\mathbb Z_+)$ according to the recursion
\begin{equation}\label{defW}
W_k = W_{k-1} + Z(W_{k-1})\,.
\end{equation}
The following proposition will be proven to show that there is a coupling where the Markov chain $(W_k : k\in\mathbb Z_+)$ stochastically dominates $(f(Y_k): k\in\mathbb Z_+)$. 

\begin{proposition}\label{Coupling}
For stable $\mathcal L$, when $f(Y_0) \leq W_0$, there exists a coupling between $(Y_k : k\in\mathbb Z_+)$ and $(W_k : k\in\mathbb Z_+)$ such that
\[
f(Y_k) \leq W_k,\qquad \text{for all } k.
\] 
\end{proposition}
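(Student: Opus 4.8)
\emph{Proof plan.}
The plan is to prove the inequality by induction on $k$, building the coupling one step at a time so that at every stage both the marginal dynamics of $(Y_k,\Lambda_k)$ prescribed by Algorithm~\ref{ALG1} (resp.\ Algorithm~\ref{ALG3}) and the recursion \eqref{defW} for $(W_k)$ are preserved. The base case is immediate: $f(Y_0)\le W_0$ is the hypothesis of the proposition. Observe also that, since $f\ge 0$, maintaining $f(Y_k)\le W_k$ automatically keeps $W_k\ge 0$, so that $Z(W_k)$ is always well defined.

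For the inductive step, suppose the coupling has been constructed up to time $k$ with $f(Y_j)\le W_j$ for all $j\le k$, and write $\mathcal{F}_k$ for the $\sigma$-field generated by the coupled history $(Y_0,\Lambda_0,W_0),\dots,(Y_k,\Lambda_k,W_k)$. The key input is Lemma~\ref{hoefd}, which for stable $\mathcal L$ supplies the closed-form family $Z(w)$, $w\ge 0$, with the property that, conditionally on $\mathcal{F}_k$, the increment $f(Y_{k+1})-f(Y_k)$ is stochastically dominated by $Z(w)$ whenever $w\ge f(Y_k)$; in particular, by the inductive hypothesis with $w=W_k$, it is stochastically dominated (conditionally on $\mathcal{F}_k$) by $Z(W_k)$. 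If Lemma~\ref{hoefd} were phrased only for $w=f(Y_k)$, one would additionally use that $Z(\cdot)$ is stochastically nondecreasing in its argument — transparent from its explicit form, since the only way $w$ enters is through the increasing runtime $\tau=c\,w+d$ — and chain $Z(f(Y_k))$ with $Z(W_k)$ to reach the same conclusion.

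Given this, realize step $k+1$ as follows. First draw $(Y_{k+1},\Lambda_{k+1})$ from the exact conditional law prescribed by the algorithm, and set $A:=f(Y_{k+1})-f(Y_k)$; its conditional law, given $\mathcal{F}_k$, is the one dominated by that of $Z(W_k)$. By the standard monotone (quantile) coupling associated with this stochastic domination we may then, using fresh independent randomness and without altering the law of $A$ or of $Z(W_k)$, draw a random variable $Z_k$ with the conditional law of $Z(W_k)$ such that $A\le Z_k$ almost surely; putting $W_{k+1}:=W_k+Z_k$ realizes \eqref{defW} exactly (note $Z_k$ still has conditional law $Z(W_k)$ given $\sigma(W_0,\dots,W_k)\subseteq\mathcal{F}_k$, so $(W_k)$ retains its Markov dynamics). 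Then
\[
f(Y_{k+1})=f(Y_k)+A\le W_k+Z_k=W_{k+1},
\]
using $f(Y_k)\le W_k$ together with $A\le Z_k$, which closes the induction. Carrying out this construction for every $k$ on a common probability space — the successive one-step kernels can be chosen measurably, so the Ionescu--Tulcea theorem delivers the required extension — yields a single coupling of $(Y_k)_{k\ge 0}$ and $(W_k)_{k\ge 0}$ with $f(Y_k)\le W_k$ for all $k$.

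The only substantive ingredient is the stochastic-domination bound of Lemma~\ref{hoefd} (and the monotonicity of $Z$ in $w$); everything else is a routine composition of monotone couplings. The main point requiring care is that the coupling introduced at step $k+1$ must not disturb the transition law of the $Y$-chain: this is precisely why $(Y_{k+1},\Lambda_{k+1})$ is sampled first, from its correct conditional distribution, and the dominating variable $Z_k$ is drawn only afterwards.
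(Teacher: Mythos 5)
Your overall architecture --- induction plus a one-step quantile (monotone) coupling, with Lemma~\ref{hoefd} as the only substantive input --- is the same as the paper's. There is, however, a genuine gap in the one place where you depart from it: you couple the \emph{increments}, asserting that $f(Y_{k+1})-f(Y_k)$ is stochastically dominated by $Z(W_k)$ either because Lemma~\ref{hoefd} holds for every $w\ge f(Y_k)$ (it does not; it dominates the increment by $Z(f(Y_k))$ only) or because $Z(\cdot)$ is ``stochastically nondecreasing in its argument.'' That monotonicity is false, and in fact goes the other way. Inspecting \eqref{NastyBound}, for fixed $z>0$ the first exponent is $(z-\sigma\phi+\delta\sigma n(w))^2/\bigl(2(\phi_f+\delta)^2\sigma^2 n(w)\bigr)\sim \delta^2 n(w)/\bigl(2(\phi_f+\delta)^2\bigr)\to\infty$ as $w\to\infty$, and the second term vanishes similarly, so $\mathbb{P}(Z(w)\ge z)\to 0$. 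The intuition ``longer runtime $\tau(w)$ means more room to climb'' ignores that the longer run also accumulates more of the stable drift $-\delta$ per $\sigma$-block, and the drift wins: $Z(w)$ is, for large $w$, stochastically \emph{decreasing} in $w$. Hence the inductive hypothesis $f(Y_k)\le W_k$ gives $Z(f(Y_k))\succeq Z(W_k)$, the wrong direction, and your chain $A\preceq Z(f(Y_k))\preceq Z(W_k)$ breaks.

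The repair is precisely the paper's Lemma~\ref{Zmonotone}: what is monotone in $w$ is not $Z(w)$ but the \emph{position} $w+Z(w)$, i.e.\ $\mathbb{P}(W_1\ge z\mid W_0=v)\le\mathbb{P}(W_1\ge z\mid W_0=w)$ for $w^*\le v\le w$, because the deterministic shift grows linearly in $w$ while $Z(w)$ shrinks. Accordingly, the one-step quantile coupling must be applied to the conditional laws of $f(Y_{k+1})$ and $W_{k+1}$ themselves (as in \eqref{CouplingBound}), not to the increments $A$ and $Z_k$; the desired inequality then follows from $F^{-1}_{Y,x_0}(U)\le F^{-1}_{Z,w_0}(U)$ rather than from the two separate inequalities $f(Y_k)\le W_k$ and $A\le Z_k$. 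The remainder of your construction --- sampling $Y_{k+1}$ first so that its law is undisturbed, preserving the Markov dynamics of $W$, and invoking Ionescu--Tulcea to assemble the full coupling --- is sound and matches the paper.
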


Since Theorem~\ref{thm:unstable} says that $f(Y)$ will diverge in the unstable case, we suggest comparing $f(Y_k)$, as outputted by Algorithm~\ref{ALG1}, with the quantiles of $W_k$. In particular let, $q^{(\alpha)}_k$ be such that $\P(W_k > q^{(\alpha)}_k) = 1-\alpha$. Note that, given a problem instance chosen according to Definition \ref{StableDef} and \eqref{phif} (that is, particular values of $\phi_f$, $\delta$, $\sigma$, and $\kappa$), the quantiles of $q^{(\alpha)}$ can be estimated quickly and easily through Monte Carlo simulations of the $W$ process.
 If $f(Y_k) > q^{(\alpha)}_k$ then we suggest concluding that the parameter set is $f$-unstable for that problem instance. Otherwise we suggest that there is not enough evidence to make a conclusion either way. 

To illustrate this approach we return to the simple example introduced in the previous section. In Figure~\ref{fig:SimpleHypTest} an estimated $q^{(0.05)}$ curve with $\delta = 0.05$, $\tau(x) = 0.5\,x + 1$, and $\sigma = \kappa = Y_0 = 1$ is compared with estimated mean curves for the $f(Y)$ process with $\mathcal L = [0, \ell]$ for $\ell = 0.6,\,0.55,\,0.5,\,0.45,$ and $0.4$. As expected the $q^{(0.05)}$ curves bound the mean curves of $f(Y)$ for $\ell < 0.5$, while for $\ell > 0.5$ the mean curves of $f(Y)$ appear to eventually exceed the $q^{(0.05)}$ curve. With reference to Definition~\ref{StableDef}, $\delta$ is the downward drift that a process must exhibit  in order to be stable. As can be seen here, it is to be expected that $q^{(0.05)}$ curves generated using a particular $\delta$ value bound those generated using higher values of $\delta$.  Since we test for a stabilizing drift up to $\delta$,  it is desirable to use a $\delta$ which is as low as possible. In Section~\ref{Cases} we investigate further the trade-off between $k^*$ and $\delta$ that users of our algorithm must keep in mind. Note that the global and local $q^{(0.05)}$ curves are nearly indistinguishable from each other here. 

\begin{figure}[h]
	\centering
	\includegraphics{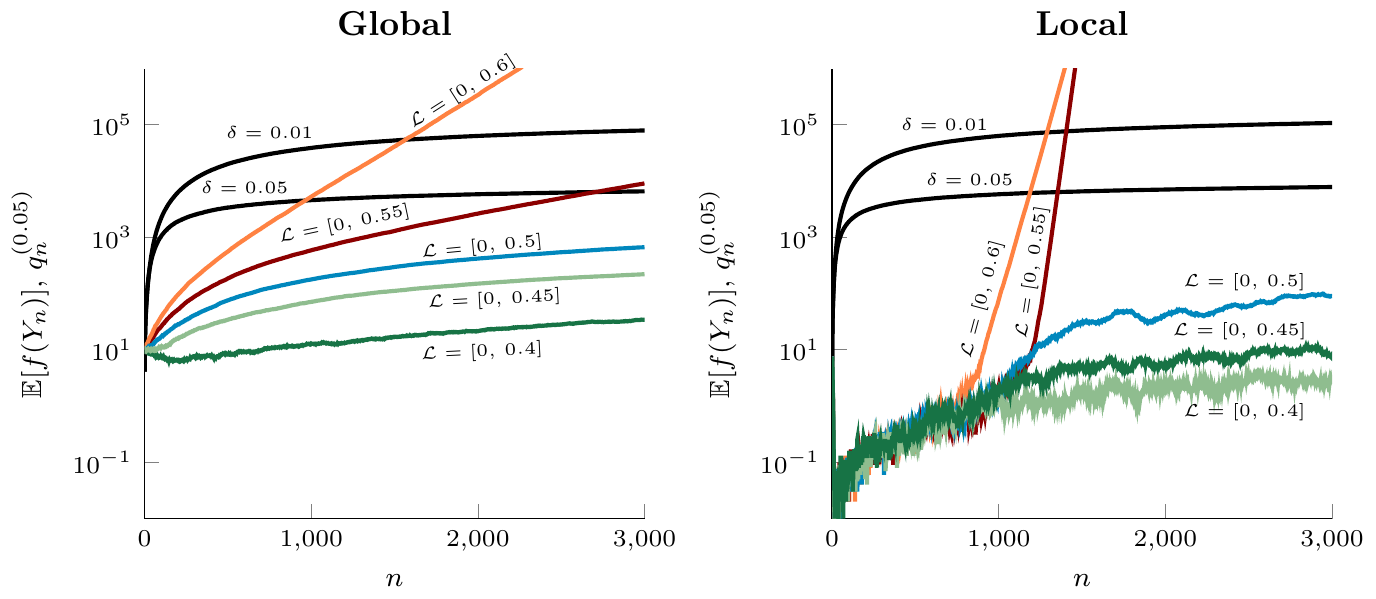}
	\caption{ Estimated mean curves of $f(Y)$ when $\mathcal L = [0, 0.4],$ $[0, 0.45],$ $ [0, 0.5],$ $ [0, 0.55],$ or $[0, 0.6]$ for a simple queue compared to estimated $q^{(0.05)}$ curves with $\delta = 0.01$ or $\delta = 0.05$.}
	\label{fig:SimpleHypTest}
\end{figure}

In some instances, obtaining long sample paths of $f(Y)$ may be a computationally intensive task.  We now describe an approach to managing the user's simulation budget, but various other approaches could be taken. In order to achieve a significance level of \emph{at least} $\alpha$, we propose choosing a simulation budget $k^*$, to take the first sample point $f(Y_k)$ such that $T_{k+1} > k^*$ and to then compare this $f(Y_k)$ with an estimate of $q^{(\alpha)}_k$. If $f(Y_k)$ exceeds $q^{(\alpha)}_k$ then we suggest rejecting the `null hypothesis' of stability, and otherwise we suggest concluding that there is not enough evidence to make a conclusion. This is the approach that we take in Section~\ref{Cases}. Note that this does not involve comparing the `test statistic' $f(Y_k)$ with its distribution, but rather we compare it with a distribution which is stochastically dominant. Assuming that the $1-\alpha$ quantile estimate for $W_k$ is accurate, asymptotically in $k^*$ the significance level will in fact tend to 0 for all $\alpha>0$, and never exceed $\alpha$.  

We summarize the above discussion in Algorithm~\ref{ALG2}, below. Note that this algorithm is just one of many potential extensions of our basic Algorithm~\ref{ALG1}, for which we give specific theoretical results, and an input to this algorithm is an appropriate $q^{(\alpha)}$ estimate. 

\begin{algorithm}\label{ALG2}
Stability test algorithm: \\
{\tt
Initialize: Set $k =1$, $T_0 = 0$, choose $Y_0$ from $\mathcal X$, and $\Lambda_0$ from $\mathcal L$.
\begin{enumerate}[(i)]
\item For $x = Y_{k-1}$ and $\tau = \tau(Y_{k-1})$, set  $T_k = T_{k-1} + \tau$ and sample $\gamma \sim {\sf Uniform}(\mathcal L)$.
\item Sample $y=X^{(\gamma)}_{\tau}$ conditional on $X^{(\gamma)}_0 = x$.
\item For  $\lambda = \Lambda_{k-1}$, set
\begin{align}\label{ChangeRule3}
(Y_k\,,\Lambda_k) = 
\begin{cases}
(y,\,\gamma) & \text{with probability}\quad \e^{\eta\, [f(y) -f(x)]_-},\\
(x,\,\lambda)& \text{otherwise.}
\end{cases}
\end{align}
\item If $T_k+\tau(Y_k) > k^*$, then proceed to (v), else set $k=k+1$ and return to (i). 
\item If $f(Y_k) > q_k^{(\alpha)}$, then conclude $\mathcal L$ is $f$-unstable, else repeat from (i). 
\end{enumerate}}
\end{algorithm}

In Section \ref{Proofs} we prove the results presented above, and then in the Section \ref{Cases} we will demonstrate the algorithm's potential on some more complex systems. 

\section{Proofs}\label{Proofs}
We first prove Theorem~\ref{thm:stable} in Section~\ref{STAB}, which applies to the stable regime, in the context of the global search algorithm and provide a remark on the minor modifications to this proof that would be needed to show the local search case. We then prove Theorem~\ref{thm:unstable}, which applies to the unstable case, for the global search and local search algorithms in Section~\ref{UNS} and Section~\ref{Sa} respectively. 

\subsection{Stable parameter set}\label{STAB}
In this subsection we prove Theorem \ref{thm:stable}. In what follows, we first give a formal definition of the random variables $Z(w)$. Then, to prove Theorem \ref{thm:stable}, we require Proposition~\ref{Coupling}, given in Section~\ref{Sec3}, and Proposition~\ref{WZero}, given below. The first of these propositions shows the existence of a process that majorizes any $Y$ process generated from a stable parameter set. The second proposition then shows that this majorizing process has an asymptotic drift of zero, which leads to the result of the theorem. In order to obtain Proposition~\ref{Coupling} we require Lemma~\ref{hoefd}, given next, and Lemma~\ref{Zmonotone}, which is a simple technical lemma that can be found in the appendix. Lemma~\ref{hoefd} explicitly provides a level dependent random variable that bounds the jumps of the $f(Y)$ process and Lemma~\ref{Zmonotone} gives a useful monotonicity property for these jumps. Proposition~\ref{WZero} is proven by contradiction and depends on Lemma~\ref{LemmaZ}  which states that the sequence of random variables defined in Lemma~\ref{hoefd} are square integrable and tend to an expectation of zero as the level diverges. 

In this section all of the proofs are performed in the context of the global search algorithm, however at the end of the section we remark on the minor modification required to adapt the proof to the local search algorithm context. \\

We develop a process $W$ that stochastically majorizes any $f(Y)$ process generated from a stable parameter set. Recall $\phi_f$, $\delta$ and $\sigma$ from Definition \ref{StableDef}. We define the function $n$ such that $n(w)$ is the smallest integer such that $\sigma\,n(w) \ge \tau(w)$ when the underlying process is in a state $f^{-1}(w)$.  We bound the jumps of $f(X^{(\lambda)})$, for a given \emph{stable} $\lambda$. The following lemma provides this bound.  

\begin{lemma}\label{hoefd}
If $\lambda$ is $f$-stable, then there exists random variables $(Z(w) : w \ge 0 )$ and a constant $w^*$ such that, for all $x$ with $f(x)\ge w^*$,
\[
{\mathbb P} \left( f(X^{(\lambda)}_{\tau(x)}) - f(x) \geq z\,\Big|\,X^{(\lambda)}_0=x \right)
\leq 
{\mathbb P} (Z(f(x)) \ge z)\,,
\]
where, for $\sigma$, $\kappa$ and $\delta$ as given in \eqref{stable}, $Z(w)$ is a random variable with distribution
\begin{align}
&{\mathbb P}(Z(w)\ge z) =\notag\\ 
&\begin{cases}
1\wedge \Big[ \exp\Big(- {\displaystyle \frac{(z-\alpha_1(w))^2}{2\alpha_2(w)}}\Big) + n(w)\,\exp\Big(- {\displaystyle \frac{(z -\alpha_3(w))^2}{2 \alpha_4(w)}} \Big)\Big]\,, & \text{if } z > 0,\\
1\,, & \text{otherwise.}
\end{cases} \label{NastyBound}
\end{align}
where 
\begin{eqnarray*}
\alpha_1(w)&=&\sigma\phi - \sigma n(w) \delta\,, \qquad \alpha_2(w)=(\phi_f+\delta)^2 \sigma^2 n(w)\,, \\
\alpha_3(w)&=&\sigma\phi - w +\kappa\,,\qquad\;\; \alpha_4(w)=\phi_f^2 \,\sigma^2 n(w)\,.
\end{eqnarray*}
\end{lemma}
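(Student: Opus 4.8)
Below is a sketch of how I would attack Lemma~\ref{hoefd}.

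The plan is to control the deterministic-horizon increment $f(X^{(\lambda)}_{\tau(x)})-f(x)$ by cutting the interval $\{0,\dots,\tau(x)\}$ into $n(w)$ consecutive sub-blocks, each of length at least $\sigma$, so that the drift hypothesis \eqref{stable} governs any block whose \emph{starting} state lies outside the finite set $\{y:|y|<\kappa\}$, and then to apply the Azuma--Hoeffding inequality twice: once on the event that \emph{every} block-start avoids $\{|y|<\kappa\}$ (this yields the first Gaussian term of \eqref{NastyBound}), and once, after a union bound over the \emph{last} block-start that does fall inside $\{|y|<\kappa\}$, for the remaining ``excursion'' events (this yields the second term together with its $n(w)$ prefactor). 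Once this tail estimate is in hand, $Z(w)$ is defined simply as a random variable whose survival function is the right-hand side of \eqref{NastyBound}.

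First I would record the routine reductions. Fix a stable $\lambda$ with constants $\delta,\sigma,\kappa$ as in \eqref{stable}. Since $\mathcal X=\mathbb Z_+^J$, the set $\{y:|y|<\kappa\}$ is finite, so $f$ is bounded there; enlarging $\kappa$ we may assume \eqref{stable} is in force whenever $f(y)>\kappa$ (automatic when $f$ is the total population). Then choose $w^*$ large enough that for every $w\ge w^*$: (i) $\tau(w)=cw+d\ge\sigma$ and $w>\kappa$; and (ii) $\alpha_1(w)\le 0$ and $\alpha_3(w)\le 0$ --- legitimate because $n(w)\to\infty$, so $\alpha_1(w)\to-\infty$ (its leading term is $-\sigma n(w)\delta$) and $\alpha_3(w)\to-\infty$ (its leading term is $-w$). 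Property (ii) makes the right-hand side of \eqref{NastyBound} nonincreasing in $z$; being in addition $[0,1]$-valued, equal to $1$ for $z\le0$, and tending to $0$ as $z\to\infty$, it is a bona fide survival function, so a random variable $Z(w)$ with that law exists (this monotonicity bookkeeping is what Lemma~\ref{Zmonotone} records). It then remains to prove the displayed tail bound for $f(x)\ge w^*$.

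Fix such an $x$, write $w:=f(x)$, $\tau:=\tau(x)$, $n:=n(w)$, and run $X:=X^{(\lambda)}$ from $X_0=x$. Take times $0=t_0<t_1<\dots<t_n$ with $t_{j+1}-t_j\ge\sigma$ for $j\le n-2$ and $t_{n-1}<\tau\le t_n$ (possible exactly because $n=\lceil\tau/\sigma\rceil$), and put $D_j:=f(X_{t_{j+1}\wedge\tau})-f(X_{t_j\wedge\tau})$, so that $f(X_\tau)-f(x)=\sum_{j=0}^{n-1}D_j$ and $|D_j|\le\phi_f\sigma$ by \eqref{phif}. Let $G$ be the event that $|X_{t_j}|\ge\kappa$ for all $j\le n-2$ (the case $j=0$ is automatic, as $w>\kappa$). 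On $G$, \eqref{stable} applied with sub-horizon $t_{j+1}-t_j\ge\sigma$ gives $\mathbb E[D_j\mid\mathcal F_{t_j}]\le-\delta\sigma$ for $j\le n-2$, while the short leftover block satisfies $D_{n-1}\le\phi_f\sigma$; applying Azuma--Hoeffding to the stopped supermartingale $\sum_j(D_j+\delta\sigma)$ (at most $n(w)$ increments, each bounded by $(\phi_f+\delta)\sigma$) yields, for $z>0$,
\[
\mathbb P\big(f(X_\tau)-f(x)\ge z,\,G\big)\ \le\ \exp\!\Big(-\frac{(z-\alpha_1(w))^2}{2\,\alpha_2(w)}\Big),
\]
the $\sigma\phi_f$ in $\alpha_1$ absorbing the crude treatment of $D_{n-1}$. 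On $G^{c}$, let $\rho\in\{0,\dots,n-1\}$ be the \emph{last} index with $|X_{t_\rho}|<\kappa$; then $f(X_{t_\rho})\le\kappa$, every block-start after $t_\rho$ avoids $\{|y|<\kappa\}$, so $\mathbb E[D_i\mid\mathcal F_{t_i}]\le0$ for the full blocks $i>\rho$, and (bounding the remaining short blocks crudely) $f(X_\tau)\le\kappa+\sigma\phi_f+\sum_{i>\rho}D_i$, whence $\{f(X_\tau)-f(x)\ge z,\ \rho=j\}\subseteq\{\sum_{i>j}D_i\ge z-\alpha_3(w)\}$. A second Azuma--Hoeffding estimate for $\sum_{i>j}D_i$ --- at most $n(w)$ terms, each bounded by $\phi_f\sigma$, each of nonpositive conditional mean --- bounds this, uniformly in $j$, by $\exp(-(z-\alpha_3(w))^2/(2\alpha_4(w)))$. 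Summing over the at most $n(w)$ values of $\rho$ and adding the bound on $G$ gives, for $z>0$, the sum of the two exponentials in \eqref{NastyBound}; since this bound is also trivially $\le1$ and equals $1$ for $z\le0$, it coincides with the survival function of $Z(w)$, which completes the proof.

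The main obstacle is not any single estimate --- both are textbook applications of Azuma--Hoeffding --- but the bookkeeping forced by \eqref{stable} being available only over sub-horizons of length $\ge\sigma$ and only from states outside the finite ball $\{|y|<\kappa\}$. One has to (a) partition the fixed horizon $\tau(x)$ into $n(w)=\lceil\tau(x)/\sigma\rceil$ blocks of length $\ge\sigma$ while leaving a single short leftover block, handled crudely --- this is the source of the $\sigma\phi_f$ shifts in $\alpha_1$ and $\alpha_3$; and (b) account separately, via the union bound over $\rho$, for every block whose starting state re-enters $\{|y|<\kappa\}$ --- this is the source of the second Gaussian term and its $n(w)$ prefactor. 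Arranging the two Hoeffding estimates so that their sum is \emph{exactly} the function in \eqref{NastyBound}, rather than only of the same shape, is where one must be careful with the $\delta\sigma$-centering, with the choice of block lengths, and with the treatment of the leftover block.
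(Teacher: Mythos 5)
Your proposal is correct and follows essentially the same route as the paper's proof: sample the chain on blocks of length $\sigma$, split according to whether the sampled path re-enters the finite set $\{y : |y|\le\kappa\}$, and apply Azuma--Hoeffding once to the $\delta\sigma$-compensated stopped supermartingale (giving the first exponential) and once, via a union bound over the at most $n(w)$ excursions from that set, to the excursion supermartingale (giving the second exponential with its $n(w)$ prefactor). Your extra care in checking that the right-hand side of \eqref{NastyBound} is a genuine survival function for $w\ge w^*$ is a detail the paper handles separately in Lemma~\ref{Zmonotone}.
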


The proof of this lemma is straightforward, yet, somewhat technical; a proof is given in the Appendix. The form of the expression for ${\mathbb P} (Z(w) \ge z)$ given above can be understood as follows. By \eqref{stable} the stable downward drift condition only applies when the chain has run for at least $\sigma$ steps, so we consider the process on steps of size $\sigma$ and ensure that we take enough of these steps, $n(w)$, to exceed the $t$ of interest. The maximum is a result of the trivial upper bound on probabilities, and the $1+n(w)$ exponential terms correspond to a union bound using an equivalent number of applications of the Azuma--Hoeffding inequality.

The downward drift condition requires $|x| > \kappa$, and so we apply Azuma--Hoeffding to different martingales depending on whether the sample path of interest enters the states $\{x : |x| < \kappa\}$ or not. The first exponential term corresponds to sample paths that never enter $|x| < \kappa$, and so the martingale we use does not include $\kappa$ and has steps which are bounded by $(\phi_f+\delta)\,\sigma$. The remaining exponential terms correspond to sample paths that hit the level $\kappa$. We associate with such sample paths a martingale which reflects the fact that there be must be an excursion from $\kappa$ to $z+f(x)$ that can be stopped just before this excursion occurs. As such these remaining exponentials do not rely on $\delta$.

From Lemma \ref{hoefd} we can prove Proposition \ref{Coupling}.

\begin{proof}[Proof of Proposition~\ref{Coupling}]
The inequality in Lemma \ref{hoefd} bounds the upward movement of the Markov process $X_\lambda$. 
For $w_0 = f(x_0)$, we see that
\begin{equation}\label{BoundYbyZ}
{\mathbb P}( f(Y_1) - f(Y_0)  \geq z \,\big|\, Y_0=x_0) \leq \mathbb{P} (Z(w_0) \geq z),\qquad \forall z \geq 0.
\end{equation}
Namely, if $f(X^{(\lambda)}_{\tau(x_0)}) - f(X^{(\lambda)}_0) >0$, then $f(Y_1)-f(Y_0)= f(X^{(\lambda)}_{\tau(x_0)}) -f(X^{(\lambda)}_0)$. 
Therefore the bound \eqref{BoundYbyZ} holds by Lemma \ref{hoefd} for $z> 0$. Further, for $z\leq 0$, the right-hand side of Inequality \eqref{BoundYbyZ} is equal to $1$, so the bound trivially holds.

Lemma~\ref{Zmonotone}, stated and proved in the appendix, assists with the coupling of $W$ and $Y$ by providing a monotonicity property for the transitions of $W$. Specifically, for constants $v$, $w$ with $w^*\leq v\leq w$, we have that
\begin{equation}\label{monotone}
\mathbb{P}(W_1  \geq z\,|\,W_0 = v) \leq \mathbb{P}(W_1 \geq z\,|\,W_0 = w)\,.
\end{equation}

Combining together \eqref{BoundYbyZ} and \eqref{monotone}, we have that 
\begin{equation}\label{CouplingBound}
{\mathbb P} (f(Y_1) \geq z\,|\,Y_0 =x_0 ) \leq {\mathbb P} (W_1 \geq z\,|\,W_0=w_0)
\end{equation}
whenever $f(x_0)\leq w_0$. 

A direct consequence of this inequality is that there is a coupling of $f(Y_k)$ and $W_k$ where, provided $f(Y_0) \leq W_0$, then $f(Y_k) \leq W_k$ for all $k$. This short, but standard, argument is presented in the next paragraph.

Let 
\begin{align*}
F_{Y,x_0}(z) &= {\mathbb P}( f(Y_1) \geq z\,|\,Y_0=x_0)\,,\\
F_{Z,w_0}(z) &={\mathbb P} (W_1 \geq z\,|\,W_0=w_0)\,,
\end{align*}
and $U$ be an independent uniform $[0,1]$ random variable. The distribution of $F^{-1}_{Y,x_0}(U)$ and $F_{Z,w_0}^{-1}(U)$ are respectively versions of $f(Y_1)$ and $W_1$ for initial values $Y_0=x_0$ and $W_0=w_0$ (see e.g.\ \cite[Section 3.12]{Wi91}). Thus we set $f(Y_1)=F^{-1}_{Y,x_0}(U)$ and $W_1= F_{Z,w_0}^{-1}(U)$. 

Notice that once $f(Y_1)$ is determined, we can extend the coupling to determine $Y_1$. To do this, we take an independent random variable with distribution
\[
{\mathbb P}( Y_1 = y\,|\,f(Y_1), Y_0)\,.
\]
Now from inequality \eqref{CouplingBound}, it is clear that $F^{-1}_{Y,x_0}(u) \leq F_{Z,w_0}^{-1}(u) $ for all values of $u$. Thus under this coupling
\[
f(Y_1)= F^{-1}_{Y,x_0}(U) \leq F_{Z,w_0}^{-1}(U) = W_1\,.
\]
Continuing inductively, we have $f(Y_k) \leq W_k$ for all $k$, as claimed.
\end{proof}

We now analyze the chain $W_k$. As the following lemma states, we find that its asymptotic drift is zero, whenever the parameter set $\mathcal{L}$ is stable. 

\begin{proposition}\label{WZero} 
\[
\limsup_{k\rightarrow \infty} \frac{W_k}{k} = 0\,.
\]
\end{proposition}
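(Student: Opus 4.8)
The plan is to combine the monotonicity of $W$ with the fact that its per-step conditional drift collapses at high levels. First I would record that $W$ is non-decreasing: by \eqref{NastyBound}, $\mathbb{P}(Z(w)\ge z)=1$ for every $z\le 0$, so $Z(w)\ge 0$ almost surely, and since $W_k-W_{k-1}=Z(W_{k-1})$ by \eqref{defW}, the limit $W_\infty:=\lim_{k}W_k$ exists in $[W_0,\infty]$. On the event $\{W_\infty<\infty\}$ we have $0\le W_k/k\le W_\infty/k\to 0$, so $\limsup_k W_k/k=0$ there; all the work is on the event $\{W_\infty=\infty\}$.

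On that event I would use a Doob decomposition. With $\mathcal F_k=\sigma(W_0,\dots,W_k)$ and $g(w):=\mathbb E[Z(w)]$, write
\[
W_k = W_0 + M_k + A_k,\qquad M_k:=\sum_{i=1}^{k}\bigl(Z(W_{i-1})-g(W_{i-1})\bigr),\qquad A_k:=\sum_{i=1}^{k} g(W_{i-1}),
\]
so that $(M_k)$ is an $(\mathcal F_k)$-martingale. By Lemma~\ref{LemmaZ} the $Z(w)$ are square integrable, uniformly in $w$ — this is visible directly from \eqref{NastyBound}, since $\alpha_1(w),\alpha_3(w)\to-\infty$ while $\alpha_2(w),\alpha_4(w),n(w)$ grow only polynomially — so there is a constant $C$ with $\mathbb E[Z(w)^2]\le C$ and hence $g(w)\le\sqrt C$ for all $w\ge 0$; in particular $\mathbb E[(M_k-M_{k-1})^2]\le\mathbb E[Z(W_{k-1})^2]\le C$. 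Then $\sum_{k\ge 1}k^{-2}\,\mathbb E[(M_k-M_{k-1})^2]<\infty$, so $\sum_k k^{-1}(M_k-M_{k-1})$ is an $L^2$-bounded martingale and converges a.s.; Kronecker's lemma gives $M_k/k\to 0$ almost surely. Together with $W_0/k\to 0$, it remains to show $A_k/k\to 0$ on $\{W_\infty=\infty\}$.

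This is a Cesàro argument and is the only place care is needed. Fix $\varepsilon>0$; by Lemma~\ref{LemmaZ}, $g(w)\to 0$ as $w\to\infty$, so choose $w_\varepsilon$ with $g(w)<\varepsilon$ for all $w\ge w_\varepsilon$. On $\{W_\infty=\infty\}$ the index $K:=\inf\{k:W_k\ge w_\varepsilon\}$ is finite, and for $k>K$,
\[
\frac{A_k}{k}=\frac1k\sum_{i=1}^{K}g(W_{i-1})+\frac1k\sum_{i=K+1}^{k}g(W_{i-1})\le\frac{K\sqrt C}{k}+\varepsilon,
\]
using $g\le\sqrt C$ on the first (finite) sum and $g(W_{i-1})<\varepsilon$, because $W_{i-1}\ge w_\varepsilon$, on the second. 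Letting $k\to\infty$ with $K,\varepsilon$ fixed gives $\limsup_k A_k/k\le\varepsilon$, and since $\varepsilon$ is arbitrary and $A_k\ge 0$, we get $A_k/k\to 0$ on $\{W_\infty=\infty\}$. Combining the pieces, $W_k/k\to 0$ a.s. on $\{W_\infty=\infty\}$, and with the first paragraph this yields $\limsup_k W_k/k=0$ almost surely.

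The main obstacle is precisely the bookkeeping around the random time $K$ — equivalently, that $\{W_\infty=\infty\}$ need not have full probability — but since the decomposition and $M_k/k\to 0$ hold pathwise (a.s.) and the Cesàro estimate is run separately on each sample path in $\{W_\infty=\infty\}$, no uniformity in $K$ is required. One can equally phrase the last step by contradiction, as the paper does: were $\limsup_k W_k/k\ge c>0$ on an event of positive probability, that event would be contained in $\{W_\infty=\infty\}$ (monotonicity of $W$), on which we have just shown $W_k/k\to 0$.
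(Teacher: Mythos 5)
Your proof is correct and follows essentially the same route as the paper: the identical martingale decomposition $W_k = W_0 + M_k + \sum_i \mathbb{E}[Z(W_{i-1})\mid W_{i-1}]$, the $L^2$ strong law for $M_k/k$, and Lemma~\ref{LemmaZ} to kill the drift term via a Ces\`aro argument. The only difference is presentational — you split explicitly on $\{W_\infty<\infty\}$ versus $\{W_\infty=\infty\}$ (using $Z(w)\ge 0$) and run the Ces\`aro estimate pathwise, where the paper phrases the same final step as a contradiction; yours is the more careful rendering of the same idea.
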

\begin{proof}
It is a straightforward calculation to show that $Z(w)$ is $L^2$ bounded in $w$ and that ${\mathbb E} Z(w) \rightarrow 0$  as $w\rightarrow \infty$. This is shown in Lemma \ref{LemmaZ} in the appendix. We analyze the martingale
\begin{align}
M_k &= \sum_{n =1}^k Z(W_{n-1}) - {\mathbb E}\big[ Z(W_{n-1})\,|\,W_{n-1}\big] \notag \\
& = W_k -W_0 - \sum_{n =1}^k {\mathbb E}\big[ Z(W_{n-1})\,|\,W_{n-1}\big]\,. \label{MLabel}
\end{align}
Since $Z(w)$ is $L^2$ bounded, $M_k$ is an $L^2$ martingale (with unbounded variation). Further, such $L^2$ martingales obey the strong law of large numbers, that is
\begin{equation}
\lim_{k\rightarrow \infty} \frac{M_k}{k} = 0\,.\label{Mg}
\end{equation}
For instance, see  \cite[Section 12.14]{Wi91} for a proof. 

We therefore have 
\begin{align}
\limsup_{k\to\infty} \frac{W_k}{k} &= \limsup_{k\to\infty}\left( \frac{W_0+M_k}{k}+\frac{1}{k}\sum_{n=1}^k {\mathbb E}\big[ Z(W_{n-1}) \,|\,W_{n-1}\big]\right)\notag\\
& \le \limsup_{k \to \infty} \frac{W_0+M_k}{k} + \limsup_{k\to\infty}\frac{1}{k} \sum_{n=1}^k {\mathbb E}\big[ Z(W_{n-1}) \,|\,W_{n-1}\big]\notag\\
&= \limsup_{k\to\infty} \frac{1}{k} \sum_{n=1}^k {\mathbb E}\big[ Z(W_{n-1}) \,|\,W_{n-1}\big]\,, \label{limsupZ}
\end{align}
where the first equality holds due to \eqref{MLabel} and the final equality holds due to \eqref{Mg}. 

We now note that the inequality \eqref{limsupZ} can only hold when $\lim_{k\rightarrow\infty} W_k/k =0$. To see this, note that if $\limsup_k W_k/k$ were positive then $W_k$ must diverge. However, as was shown in Lemma \ref{LemmaZ}, we also have that ${\mathbb E}\big[ Z(W_{n-1}) \,\big|\, W_{n-1}\big]\rightarrow 0$ as $W_{n-1} \rightarrow \infty$. Thus the average of these terms must be zero, that is
\[
0=\limsup_{k\rightarrow \infty} \frac{1}{k} \sum_{n=1}^k {\mathbb E}\big[ Z(W_{n-1})\,\big|\, W_{n-1}\big] \geq \limsup_{k\rightarrow \infty} \frac{W_k}{k} > 0\,,
\]
which is a contradiction. Thus, $\limsup_{k\rightarrow\infty} W_k/k =0$, as required.
\end{proof}
The proof of Theorem~\ref{thm:stable} is now an application of Proposition~\ref{Coupling} and Proposition \ref{WZero}. \\

\begin{proof}[Proof of Theorem \ref{thm:stable}]
For $W_0 = f(Y_0)$ Proposition~\ref{Coupling} provides
\[
f(Y_k) \leq W_k\,,\qquad\text{for all }k\,.
\] 
The time increment $\tau(x)$ is bounded below, so $T_k\geq \gamma k$ for some positive constant $\gamma$. Hence, Proposition \ref{WZero} implies
\[
\limsup_{k\rightarrow \infty} \frac{f(Y_k)}{T_k} \leq  \limsup_{k\rightarrow \infty} \frac{W_k}{\gamma k} = 0\,,
\]
as required.
\end{proof}

\begin{remark}[Adapting the proof to local-search]
We briefly remark one way in which the above argument can be adapted to the local-search case. Firstly, we note that local search (in the worse case) will choose the maximum of two independent simulation runs. Given that both parameters $\gamma$ and $\lambda$ are stable and given Lemma \ref{hoefd}, we then have that the local search update can be bounded as follows
\begin{align*}
 \mathbb P (f(Y_1) \geq z\,|\,f(Y_0) =x, \lambda, \gamma) 
&\leq {\mathbb P} (\left\{ f(X^{(\lambda)}_\tau) - f(x)  \geq z \right\} \cup \left\{ f(X^{(\lambda)}_\tau) - f(x) \geq z \right\} ) \\
&\leq  \mathbb P ( Z(f(x)) + Z'(f(x)) \geq 2z)\,.
\end{align*}
In the second inequality above, we apply Lemma \ref{hoefd} to obtain two i.i.d. copies of $Z(f(x))$. From this one can see that the result of the proof of Theorem~\ref{thm:stable} follows by replacing \eqref{defW} with 
\[
W_k = W_{k-1} + Z(W_{k-1}) + Z'(W_{k-1})
\]
for two iid versions of $Z$. This gives one straightforward way of adapting the proof of Theorem~\ref{thm:stable}. Other methods with tighter bounds are also possible.
\end{remark}

\subsection{Proof of Theorem~\ref{thm:unstable} for the global search algorithm}\label{UNS}
In this subsection we prove Theorem \ref{thm:unstable} for the global search algorithm. The proof relies on two lemmas, Lemma~\ref{SubMg}, and Lemma~\ref{Concentrate}. In Lemma~\ref{SubMg} we bound the drift of $f(Y)$ and show that $f(Y)$ can be used to construct a submartingale.  This follows from the fact that unstable parameters choices significantly increase the drift, while stable parameter choices do not significantly decrease it. In Lemma~\ref{Concentrate} we bound the moments of this submartingale. Then, using standard martingale arguments we show that every time the submartingale exceeds some level, with positive probability it stays above this level forever. Since $f(Y)$ is an irreducible Markov chain our divergence result then follows.  

In the following we use the notation $[x]_+ := \max\{x,\,0\}$ and $\Delta f(x,\,y) := f(y) - f(x)$. 
\begin{lemma}\label{SubMg}
If $\mathcal L$ is unstable, then there exist constants $\kappa \ge 0$ and $a>0$ such that for all $x$ with $|x|\geq \kappa$ 
\begin{equation}
{\mathbb E}\left[ f(Y_{k+1})-f(Y_k)\,|\,Y_k\right] 
\geq 
a\,\tau(Y_k) > 0 \,. \label{Drift}
\end{equation}
\end{lemma}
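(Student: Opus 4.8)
The plan is to condition on the parameter proposal $\gamma$ and split the expected increment according to whether $\gamma$ is unstable (i.e.\ $\gamma\in\bar{\mathcal L}$) or not. The key observation is that the acceptance rule in \eqref{ChangeRule2} only rejects moves that would \emph{increase} $f$; equivalently, $f(Y_{k+1})-f(Y_k)$ is always at least $[f(y)-f(x)]_-$ times an indicator, so the contribution from downward moves is controlled, while upward moves are accepted with probability bounded below by $\e^{-\eta\phi_f}$ over the relevant range. More precisely, I would first write, for $x=Y_k$,
\[
{\mathbb E}\big[f(Y_{k+1})-f(Y_k)\,|\,Y_k=x\big]
= {\mathbb E}\Big[\,[f(y)-f(x)]_+\,\e^{\eta[f(y)-f(x)]_-} + [f(y)-f(x)]_-\Big],
\]
where $y=X^{(\gamma)}_{\tau(x)}$ with $\gamma\sim{\sf Uniform}(\mathcal L)$; note that on the event $f(y)\ge f(x)$ the multiplier $\e^{\eta[f(y)-f(x)]_-}=1$. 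Hence the increment equals ${\mathbb E}\big[[f(y)-f(x)]_+ + [f(y)-f(x)]_-\big]={\mathbb E}[f(y)-f(x)]$ — wait, that collapse is too crude; instead I keep the two terms separate, bound the upward term below using only the event that $\gamma$ is unstable and that $f(y)-f(x)$ is a moderate multiple of $\tau(x)$, and bound the (negative) downward term below by its full expectation, which is $O(\tau(x))$ with a constant we can make as small as we like relative to the upward gain.

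The heart of the argument is the following: by Definition~\ref{StableDef}, for $\lambda\in\bar{\mathcal L}$ and $|x|\ge\kappa$ we have ${\mathbb E}[f(X^{(\lambda)}_{\tau(x)})-f(x)\,|\,X^{(\lambda)}_0=x]\ge\delta\sigma\cdot\lfloor\tau(x)/\sigma\rfloor\gtrsim\delta\,\tau(x)$ (applying \eqref{unstable} repeatedly over blocks of length $\sigma$, using the Markov property and monotonicity of the bound in the starting state, which holds since the drift condition is uniform over all $|x|\ge\kappa$). Using the bounded-increments property \eqref{phif}, $f(y)-f(x)\in[-\phi_f\tau(x),\,\phi_f\tau(x)]$, so a Paley--Zygmund / reverse-Markov type estimate shows that ${\mathbb P}(f(y)-f(x)\ge \tfrac12\delta\tau(x)\,|\,\gamma\in\bar{\mathcal L})$ is bounded below by a positive constant depending only on $\delta$ and $\phi_f$; on that event the accepted upward increment is at least $\tfrac12\delta\tau(x)\cdot\e^{-\eta\phi_f\tau(x)}$ — which unfortunately decays in $\tau(x)$. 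To avoid this decay one should instead only demand $f(y)-f(x)$ of constant order (not order $\tau(x)$): since the total drift is $\gtrsim\delta\tau(x)$ and increments are bounded by $\phi_f$, the chain must spend a positive fraction of its $\tau(x)$ steps moving up, so ${\mathbb P}\big(f(y)-f(x)\ge 1\big)$ is bounded below independent of $\tau(x)$, and then the accepted increment is at least $1\cdot\e^{-\eta\phi_f}>0$ times $|\bar{\mathcal L}|/|\mathcal L|$. Combining with the downward term, bounded below by $-{\mathbb E}[(f(x)-f(y))_+]\ge -\phi_f\,{\mathbb E}[\tau(x)\mathbf 1\{\text{down}\}]$, gives a net lower bound of the form $(\text{const})\cdot\tau(x)$ once we show the upward gain also scales with $\tau(x)$ — which it does because on the unstable event the \emph{excess} upward drift accumulates linearly in $\tau(x)$.

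I expect the main obstacle to be reconciling the two competing effects: the Metropolis factor $\e^{\eta[f(y)-f(x)]_-}$ penalises large upward jumps exponentially, yet to get a bound proportional to $\tau(Y_k)$ (not a constant) we need the accepted increment to grow with $\tau(Y_k)$. The resolution is to decompose $f(y)-f(x)=\sum$(increments over $\sigma$-blocks) and track the conditional drift block by block: at each block the accepted move has nonnegative expected contribution that is bounded below by $a\sigma$ for $|x|\ge\kappa$, where $a$ depends on $\delta$, $\phi_f$, $\eta$, $\sigma$ and $|\bar{\mathcal L}|/|\mathcal L|$ but \emph{not} on the block index, so summing over the $\asymp\tau(x)/\sigma$ blocks yields the claimed $a\,\tau(Y_k)$. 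Making this block decomposition rigorous — in particular arguing that the single-step Metropolis acceptance can be distributed across the $\tau(x)$ internal simulation steps, or alternatively that on a positive-probability event the final value $f(y)$ exceeds $f(x)+a\tau(x)$ by only a bounded margin so the penalty factor is $\Theta(1)$ — is the delicate point; everything else (choosing $\kappa$ as the maximum of the $\kappa$'s from \eqref{unstable} and the $w^*$-type thresholds, and bounding the stable-$\gamma$ contribution using \eqref{stable} which can only help) is routine.
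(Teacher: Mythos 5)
Your overall plan---condition on the proposal $\gamma$, split $\mathcal L$ into $\bar{\mathcal L}$ and its complement, and show the unstable part contributes $\gtrsim\delta\,\tau(x)$ while the rest is harmless---is exactly the paper's strategy. But the execution goes wrong at your very first identity, and the error propagates into a phantom difficulty that consumes the rest of the argument. Conditional on $y$, the increment is $f(y)-f(x)$ with probability $\e^{\eta[f(y)-f(x)]_-}$ and $0$ otherwise, so
\[
{\mathbb E}\big[f(Y_{k+1})-f(Y_k)\,\big|\,Y_k=x\big]
={\mathbb E}\Big[\big(f(y)-f(x)\big)\,\e^{-\eta\left[-(f(y)-f(x))\right]_+}\Big]\,;
\]
the exponential damping sits on the \emph{negative} part of the increment, not on the positive part as in your display (as written, your expression collapses identically to ${\mathbb E}[f(y)-f(x)]$, which is why it felt ``too crude''). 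Since $[z]_-=\min\{0,z\}$ vanishes for $z\ge 0$, upward moves are accepted with probability one: there is no exponential penalty on large upward jumps, and your entire third paragraph---the worry that the accepted gain decays like $\e^{-\eta\phi_f\tau(x)}$, and the proposed block-by-block redistribution of the acceptance probability---solves a problem that does not exist. For $\gamma\in\bar{\mathcal L}$ the conditional contribution is simply ${\mathbb E}[f(y)-f(x)]\gtrsim\delta\,\tau(x)$ with no loss from the Metropolis factor.

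The second, and fatal, gap is your treatment of the downward contribution: you bound it below by $-\phi_f\,{\mathbb E}[\tau(x)\mathbf 1\{\text{down}\}]$, a quantity of order $-\tau(x)$. With that bound the negative term competes at the same order as the positive one and the proof cannot close, since the resulting constant could have either sign. The ingredient you are missing is the elementary pointwise inequality $z\,\e^{-\eta[-z]_+}\ge\max\{z,\,-(\e\,\eta)^{-1}\}$ for all $z\in\mathbb R$ (for $z<0$ the function $z\,\e^{\eta z}$ is minimized at $z=-\eta^{-1}$). Hence every stable proposal, and every downward realization, contributes at least the \emph{constant} $-(\e\,\eta)^{-1}$, uniformly in $\tau(x)$. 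The lemma then follows at once: with $p=|\bar{\mathcal L}|/|\mathcal L|$ the total drift is at least $p\,\delta\,\tau(x)-(1-p)(\e\,\eta)^{-1}$, and one chooses $\kappa$ so large that the first term dominates, giving $a=p\,\delta/2$. Your Paley--Zygmund estimate and block decomposition of the acceptance step are not needed.
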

\begin{proof}
Aside from the simulation in $\mathcal X$ between $Y$ samples, our algorithm consists of two random steps:~ (i) the selection of $\Lambda_{k+1}$, and (ii) the random state update rule \eqref{ChangeRule2}. Upon conditioning on these two steps the expected change in $f$ can be calculated as follows
\begin{align}
&{\mathbb E}\left[\Delta (Y_{k},\,Y_{k+1})\,|\,Y_k=x\right] \notag\\
&= \frac{1}{|{\mathcal L}|}\int_{\mathcal L} {\mathbb E}\left[\Delta f(Y_{k},\,Y_{k+1})\,|\,Y_k=x, \Lambda_{k+1}=\mu \right] d\mu \notag \\
&= \frac{1}{|{\mathcal L}|}\int_{\mathcal L} 
	{\mathbb E}\left[ 
    	 \Delta f(X^{(\mu)}_0,\,X^{(\mu)}_{\tau(x)})\,
			\exp\left(
            		-\eta \left[
                    						-\Delta f(X^{(\mu)}_0,\,X^{(\mu)}_{\tau(x)})
                                            \right]_+ 
                   \right)
\right]{\rm d}\mu\,. \label{ChangeInF}
\end{align}

Denote $p := |\bar{\mathcal L}|/|{\mathcal L}|\in(0,1]$, where  $\bar{\mathcal{L}}$ is the set for which $X_\lambda$ is unstable (cf.\  \eqref{unstable}). 
Now split the above integral by distinguishing between: (i)~$\mu\in \bar{\mathcal L}$, and (ii)~$\mu\in{\mathcal L}\setminus \bar{\mathcal L}$.
It is readily verified that for all $z\in{\mathbb R}$ the function $z\mapsto z\, \exp(-\eta\,[-z]_+)$ satisfies
\begin{equation}\label{ineq}
z\,\exp(-\eta[-z]_+) \ge \max\left\{z,-(\e\,\eta)^{-1}\right\}\,.
\end{equation}
The stated bound is trivial for $z\ge 0$, and for $z<0$ simply note that $z\,\exp(-\eta[-z]_+)$ is minimized at $z=
-\eta^{-1}$. 

For $\mu\in \bar{\mathcal{L}}$ we use the lower bound of $z$ in \eqref{ineq},
\begin{align}
&\frac{1}{|{\mathcal L}|}\int_{\bar{\mathcal L}} {\mathbb E}\left[\Delta f\left(X^{(\mu)}_0,\,X^{(\mu)}_{\tau(x)}\right)
\exp\left(-\eta \left[-\Delta f\left(X^{(\mu)}_0,\,X^{(\mu)}_{\tau(x)}\right)\right]_+ \right)
\right]{\rm d}\mu \notag \\
\ge &
\frac{1}{|{\mathcal L}|}\int_{\bar{\mathcal L}} {\mathbb E}\left[ \Delta f\left(X^{(\mu)}_0,\,X^{(\mu)}_{\tau(x)}\right)\right]
{\rm d}\mu 
 \ge p\,\delta\,\tau(x)\,. \label{LowerBound1}
\end{align}
In the second inequality above, we apply the assumption that our Markov chain is unstable on $\mathcal L$ (cf.\ \eqref{unstable}).

For $\mu\in\mathcal{L}\setminus\bar{\mathcal{L}}$,  we use the lower bound of $-(\e\,\mu)^{-1}$ in \eqref{ineq}. This yields
\begin{align}
&\frac{1}{|{\mathcal L}|}\int_{{\mathcal L}\setminus \bar{\mathcal L}} {\mathbb E}\left[\Delta f\left(X^{(\mu)}_0,\,X^{(\mu)}_{\tau(x)}\right)
\exp\left(-\eta \left[-\Delta f\left(X^{(\mu)}_0,\,X^{(\mu)}_{\tau(x)}\right)\right]_+ \right)
\right]{\rm d}\mu \notag\\
&\ge -\left(1-p\right)(\e\,\mu)^{-1}\,.\label{LowerBound2}
\end{align}

Combining Equation \eqref{ChangeInF} with Inequalities \eqref{LowerBound1} and \eqref{LowerBound2} yields
\begin{equation}\label{ExpectedDrift}
{\mathbb E}\left[\Delta f(Y_k,\,Y_{k+1})\,|\,Y_k=x\right] 
\geq 
p\,\delta\,\tau(x)-\left(1 - p \right)(\e\,\mu)^{-1}\,.
\end{equation}
Since $\tau(x)\rightarrow\infty$ as $|x|\rightarrow \infty$. There exists $\kappa>0$ such that $p\,\delta\,\tau(x)-\left(1 - p \right)(\e\,\mu)^{-1} \geq p\,\delta \tau(x)/2$ for all $|x|>\kappa$.  Letting $a=p\,\delta/2$, we have the result. 
\end{proof}

An immediate consequence of the above proof is that the process
\[
F_k := f(Y_k)-a\sum_{i=0}^{k-1} \tau(Y_i)
\]
forms a submartingale. This in itself is not sufficient to prove $f(Y_k)$ diverges in the sense of Theorem \ref{thm:unstable}. However, this is possible when we bound the moments of $F_k$ as follows. In the following let $S_k = (Y_k,\,\Lambda_k)$.

\begin{lemma}\label{Concentrate}
If $\mathcal L$ is unstable, then there exist an $r>0$ such that for $|Y_{k-1}|\geq \kappa$
\begin{equation}
{\mathbb E}\big[ \exp\big((-r\,(F_{k}-F_{k-1})\big)\,\big|\,S_{k-1}\big] < 1\,. \label{Einf}
\end{equation}
\end{lemma}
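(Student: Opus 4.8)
The plan is to show that the upward increments of $F_k$ have a finite exponential moment by exploiting the fact that $F_k - F_{k-1}$ has a strictly positive mean (Lemma~\ref{SubMg}) together with a suitable boundedness or sub-Gaussian control on the fluctuations. First I would write
\[
F_{k}-F_{k-1} = \bigl[f(Y_{k})-f(Y_{k-1})\bigr] - a\,\tau(Y_{k-1}),
\]
so that $-(F_k-F_{k-1}) = a\,\tau(Y_{k-1}) - \Delta f(Y_{k-1},Y_k)$. Conditional on $S_{k-1}$, the quantity $a\,\tau(Y_{k-1})$ is a constant, so the issue reduces to controlling $\mathbb{E}\bigl[\exp\bigl(r(a\tau(Y_{k-1}) - \Delta f(Y_{k-1},Y_k))\bigr)\,\big|\,S_{k-1}\bigr]$, i.e.\ to showing the \emph{downward} moves of $f(Y)$ are not too heavy-tailed relative to the runtime $\tau$. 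The natural tool is again the Azuma--Hoeffding machinery already invoked for Lemma~\ref{hoefd}: since $f(X^{(\gamma)})$ has increments bounded by $\phi_f$ (assumption \eqref{phif}), over the $\tau(Y_{k-1})$ simulation steps the decrease $[\,-\Delta f(Y_{k-1},Y_k)\,]_+$ is stochastically dominated by the positive part of a centered process with Azuma-type Gaussian tails of variance proportional to $\tau(Y_{k-1})$. Hence $\mathbb{E}[\exp(r\,[-\Delta f]_+)\,|\,S_{k-1}] \le C\exp(c\,r^2\,\tau(Y_{k-1}))$ for explicit constants.

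The key step is then the matching of exponential rates: combining the two bounds gives, for $|Y_{k-1}|\ge\kappa$,
\[
\mathbb{E}\bigl[\exp\bigl(-r(F_k-F_{k-1})\bigr)\,\big|\,S_{k-1}\bigr]
\le \exp\bigl(r\,a\,\tau(Y_{k-1})\bigr)\cdot C\exp\bigl(c\,r^2\,\tau(Y_{k-1})\bigr)
= C\exp\bigl(\tau(Y_{k-1})\,(r a + c r^2)\bigr),
\]
so I want to choose $r>0$ small enough that $ra + cr^2 < 0$ — but since $a>0$ this is impossible as stated, which signals that the drift term actually helps in the \emph{other} direction and one must be more careful about signs. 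The correct accounting is that the drift $+a\tau$ in $F_k$ was \emph{subtracted}, so that $-(F_k-F_{k-1}) = a\tau - \Delta f$; one should instead bound $\mathbb{E}[\exp(-r\Delta f)\,|\,S_{k-1}]$ directly using that $\Delta f$ itself has mean $\ge a\tau(Y_{k-1})$ and Azuma tails, obtaining something like $\exp(-r a\tau + c r^2 \tau)$ up to the same union-bound constants, and then pick $r$ with $0 < r < a/c$ (made uniform over $Y_{k-1}$ by taking $\tau(Y_{k-1})\ge d>0$, which forces the constant $C$ below $1$ when $\tau$ is large, and handles the finitely many small-$\tau$ states separately).

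I expect the main obstacle to be precisely this sign/rate bookkeeping: one needs the negative linear term $-ra\tau$ from the positive drift of Lemma~\ref{SubMg} to dominate the positive quadratic term $cr^2\tau$ coming from the Azuma fluctuation bound, uniformly in $S_{k-1}$ subject to $|Y_{k-1}|\ge\kappa$ (equivalently $\tau(Y_{k-1})$ bounded below). Since both terms scale linearly in $\tau(Y_{k-1})$, choosing $r$ small (proportional to $a/c$) makes $ra + cr^2 < 0$ hold with a margin independent of the state, and then $C\exp(-\epsilon\,\tau(Y_{k-1})) \le C\exp(-\epsilon d) < 1$ after possibly shrinking $r$ further. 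The routine part — deriving the Azuma-based exponential-moment bound on $\Delta f$ over $\tau$ steps — parallels the computation in the appendix proof of Lemma~\ref{hoefd} and I would reference that rather than redo it. A secondary technical point is ensuring the expectation is finite at all, which follows since $[-\Delta f]_+ \le \phi_f\,\tau(Y_{k-1})$ is in fact \emph{bounded} given $S_{k-1}$ (as $\tau(Y_{k-1})$ is a deterministic function of $Y_{k-1}$), so the exponential moment is automatically finite and only the inequality $<1$ requires the drift argument.
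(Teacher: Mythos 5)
Your proposal is correct and follows essentially the same route as the paper: invoke the drift bound $\mathbb{E}[F_k-F_{k-1}\,|\,S_{k-1}]\ge a\,\tau(Y_{k-1})$ from Lemma~\ref{SubMg}, control the fluctuations by an Azuma--Hoeffding bound whose variance proxy scales \emph{linearly} in $\tau(Y_{k-1})$, and then choose $r$ small enough that the negative linear term $-ra\tau$ dominates the quadratic term $cr^2\tau$, with $\tau$ bounded below (i.e.\ $\kappa$ large enough) closing the argument. The only difference is cosmetic --- the paper converts the Azuma tail bound into the exponential moment by integrating the tail, whereas you bound the moment generating function directly --- and your self-corrected sign bookkeeping lands on exactly the paper's condition $r^2c - ra<0$.
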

\begin{proof}
The change in the process from $F_{k-1}$ to $F_{k}$ is achieved by a process with bounded increments. Upon applying the Azuma--Hoeffding inequality, we have that 
\begin{equation}\label{FAzzuma}
{\mathbb P}  \left(  F_{k} - F_{k-1} -  {\mathbb E} [ F_{k} - F_{k-1}]  \leq -y  \; \Big|  S_{k-1}\right) \leq  \exp\left(- \frac{2\,y^2}{\tau_k\phi_f^2}\right)\,.
\end{equation} 

Now consider the following sequence of inequalities:
\begin{align*}
&\quad {\mathbb E} \left[ \exp\big( -r\,(F_{k} - F_{k-1} ) \big)\,\Big|\,  S_{k-1}\right] \\
&= 
\int_0^\infty 
	{\mathbb P} 
    		\left( 
            		\exp(-r\,(F_{k}-F_{k-1})) \geq z 
                   \Big|
                   S_{k-1} 
                   \right)
               \d z 
\\
&=
\int_0^\infty 
		{\mathbb P}
        		\left( 
                		F_{k}-F_{k-1} \leq -  \frac{1}{r} \log z 
                        \Big|
                       S_{k-1} 
                \right) \d z 
\\
&\leq 
\int_0^\infty 
		{\mathbb P} 
        		\left( 
                		F_{k}-F_{k-1} -{\mathbb E} [ F_k - F_{k-1}]  \leq -a\tau_k -  \frac{1}{r}\log z 
                        \Big|
                        S_{k-1}
                 \right)
                 \d z 
\\
&\leq
\int_{\exp(-ra\tau_k)}^\infty 
		{\mathbb P}
        		\left( 
                		 F_{k}-F_{k-1} -{\mathbb E} [ F_k - F_{k-1}] \leq -a\tau_k -   \frac{1}{r}\log z  
                 		\Big|
                        S_{k-1}
                 \right) 
                 \d z  
          + 
          \e^{-ra\tau_k }
\\
&\leq 
\int_{\exp(-r\,a\,\tau_k)}^\infty \exp\Big(- \frac{2}{\tau_k\,\phi_f^2}(a\,\tau_k +r^{-1}\log z)^2\Big)\d z + \exp(-r\,a\,\tau_k)\,.
\end{align*}
In the first inequality above, we apply the bound that $\mathbb{E} [ F_k - F_{k-1} ] \geq a\,\tau_k$ from Lemma \ref{SubMg}. In the second inequality, for values of $z$ such that $-a\,\tau_k - r^{-1}\log z \geq 0$ we bound the integrand from above by $1$, which results in the $\exp(-r\,a\,\tau_k)$ term appearing. In the final inequality we apply \eqref{FAzzuma}.

We now show that the right hand side of the expression above is strictly less than 1 for a suitable choice of $r$, and $\tau_k$ suitably large:
\begin{align*}
&\int_{\exp(-r\,a \tau_k)}^\infty 
	\e^{
    	- \frac{2}{\tau_k\,\phi_f^2}(a\,\tau_k + r^{-1} \log z)^2
        } 
     \d z 
=  \frac{1}{r} \int_0^\infty 
			\e^{-\frac{2\,y^2}{\tau_k\,\phi_f^2}} 
  		 	\cdot \e^{r\,y} 
            \cdot \e^{-r\,a\, \tau_k}
        \d y \\
&= \frac{1}{r} \int_0^\infty 
			\exp\left(-\frac{2}{\tau_k\,\phi_f^2} (y-r\,\tau_k\phi_f^2/4)^2\right)
           \cdot  \exp\left(r^2\,\tau_k/4\right)\cdot\exp\left( - r\,a\, \tau_k\right)
          \d y \\
&\leq  r^{-1}\sqrt{\tau_k\,\phi_f^2\,\pi/2}\,\cdot \exp(r^2 \tau_k/4 - r\,a\, \tau_k)\,.
\end{align*}
The final inequality follows by integrating over $\mathbb{R}$ rather than $\mathbb{R}_+$ and by noting that the integral of $\exp(-y^2)$ over $\mathbb{R}$ is equal to $\sqrt{\pi}$.

Observe that there exists $r>0$ such that $r^2/4 - r\,a<0$. Thus for this choice of $r$, for all $\tau_k$ suitably large, we have that, as desired,
 \[
 \int_{\exp(-r\,a\, \tau_k)}^\infty 
	\exp\Big(
    	- \frac{1}{\tau_k}(a\,\tau_k + r^{-1} \log z)^2
        \Big)
     \d z
+
\exp(-r\,a\, \tau_k) < 1\,.
\]
\end{proof}

We can now prove Theorem \ref{thm:unstable} using well known martingale arguments.\\

\begin{proof}[Proof of Theorem \ref{thm:unstable}] We first apply standard stopping arguments to \eqref{Einf} to show that if $Y_0$ is such that $|Y_0|>\kappa$, then there is positive probability that $F_k$ will not go negative, namely,
\begin{equation}\label{PFk}
 {\mathbb P} \Big( \inf_{k\ge 0} F_k \geq 0 \Big) \geq 1 - \exp(-rK) >0\, ,
\end{equation}
for some $K>0$. We do so by investigating the probability of its complement. 

Let $T$ be the first time when $F_k < 0$ occurs for $k\ge 0$, which is a stopping time. Using Lemma \ref{Concentrate}, recalling that $r>0$,
\begin{align*}
{\mathbb P} \Big( \inf_{k\ge 0} F_k <  0  \Big) &= {\mathbb P}(F_T < 0)\\
&= {\mathbb P}\left( \e^{-r F_T} > 1 \right) \\
&\le {\mathbb E}\,  \exp(-r F_T) \\
&=  {\mathbb E}\, \Big[  \liminf_{n\rightarrow \infty} \exp(- r F_{T\wedge n}) \Big]\\
&\le \liminf_{n\rightarrow \infty} {\mathbb E}\, \Big[  \exp(- r F_{T\wedge n})\Big] \\
&\le  \liminf_{n\rightarrow \infty} {\mathbb E}\, \exp(- r F_0) = {\mathbb E}\, \exp(- r F_0) \leq \exp(-rK)\, 
\end{align*}
where $K:=\min_{y: |y|>\kappa} \{ f(y)\}$ is a positive constant since $f$ is positive and $f(x)\rightarrow\infty$ as $|x|\rightarrow \infty$. The first two equalities above apply our stopping time definition and an exponential change of variable. The first inequality above applies Markov's inequality, the second applies Fatou's lemma and the third is the optional stopping theorem (see e.g.\ \cite[Section 10.10]{Wi91}) applied to our supermartingale.

The next step is to show using the Strong Markov Property, that at every time $\ell$ when $|Y_\ell|>\kappa$ holds, there is a positive probability that the process $F_k$ remains positive for all remaining time. Due to irreducibility $|Y_k|>\kappa$ occurs infinitely often, and so eventually it will be that $F_{k}>0$ for all time. We now argue this point more formally.
 Let $\ell_0$ be the first time that $|Y_k| > \kappa$ holds. For $n\geq 1$, let 
\[
F^{(n)}_k = f(Y_k) - a\sum_{i=\ell_{n-1}}^{k-1} \tau(Y_i)\,,
\]
which is the process $F$ started from time $\ell_{n-1}$. Let $\sigma_n$ be the first time after $\ell_{n-1}$ when $F^{(n)}_k < 0$ holds, and let $\ell_n$ be the first time after $\sigma_n$ that $|Y_k| > \kappa$ holds.
Since our Markov chain is irreducible it must be that if $\sigma_n$ is finite, then $\ell_{n+1}$ is finite. By this and \eqref{PFk} we have 
\[
\mathbb P(\sigma_n < \infty\,|\,\sigma_{n-1} < \infty)  = \mathbb P(\sigma_n < \infty\,|\,\ell_n < \infty) <  \e^{-rK} .
\] 
Thus, upon noting that $\sigma_n$ cannot possibly be finite if $\sigma_{n-1}$ is not, we have
\[
\mathbb P( \sigma_n < \infty ) \leq \exp(-rK)\,\mathbb P ( \sigma_{n-1} < \infty ) < \dotsc < \exp(-nrK)\,.
\]
Now, note that
\[
\sum_{n=0}^\infty \mathbb P( \sigma_n < \infty ) = \sum_{n=0}^\infty \exp(-nrK) < \infty\,,
\]
so by Borel--Cantelli (see e.g.\ \cite[Section 2.7]{Wi91})
\begin{align*}
\mathbb P ( F^{(n)}_k < 0,\text{ infinitely often }) = 0\,.
\end{align*}
Thus, there exists a $k'$ such that for all $k\ge k'$, we have that 
\[
f(Y_k) - a \sum_{i=k'}^{k-1} \tau(Y_i) \geq 0 
\]
which, after rearranging, implies 
\[
\liminf_{k\rightarrow \infty}\frac{f(Y_k)}{\sum_{i=0}^{k-1} \tau(Y_i)} \geq  \liminf_{k\rightarrow \infty}\frac{f(Y_k)}{\sum_{i=k'}^{k-1} \tau(Y_i)}\cdot\liminf_{k\rightarrow \infty}\frac{\sum_{i=k'}^{k-1} \tau(Y_i)}{\sum_{i=0}^{k-1} \tau(Y_i)}  \geq a\,,
\]
as required.\\
\end{proof}

\subsection{Proof of Theorem~\ref{thm:unstable} for the local search algorithm}\label{Sa}
We now prove Theorem~\ref{thm:unstable} under the premise that the local search algorithm, Algorithm \ref{ALG3}, is applied. First, we consider the situation where the local search algorithm must compare an unstable parameter $\lambda$ with a stable parameter \BP{$\gamma$}. The following lemma will be used to show that the probability of the Metropolis rule, \eqref{LocalChangeRule}, selecting $\gamma$ will be a low probability event.
\begin{lemma}\label{AzLem}
For the events
\begin{align*}
A = 
\Big\{ 
f\big(X^{(\lambda)}_{\tau(x)}\big) - f\big(X^{(\lambda)}_0\big) 
\le 
\frac{3\delta}{4}f(X_0^{(\lambda)})
\Big\}
\quad \text{and} \quad 
B = 
\Big\{ 
f\big(X^{(\gamma)}_{\tau(x)}\big) - f\big(X^{(\gamma)}_0\big) 
\ge 
\frac{\delta}{2} f(X_0^{(\gamma)})
\Big\}\,,
\end{align*}
with $\lambda \in \bar{{\mathcal L}}$ and $\gamma \notin \bar{{\mathcal L}}$ there exists positive constants $\beta_1$ and $\beta_2$ such that
\begin{align}
{\mathbb P} (A \,|\, X_0^{(\lambda)}=x) &\leq \beta_1 \e^{-\beta_2 \tau(x)}\,,\label{ABOUND}\\
{\mathbb P} (B \,|\, X_0^{(\gamma)}=x) &\leq \beta_1 \e^{-\beta_2 \tau(x)}\,.\label{BBOUND}
\end{align}
\end{lemma}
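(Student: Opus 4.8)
The plan is to prove both tail bounds \eqref{ABOUND} and \eqref{BBOUND} by essentially the same Azuma--Hoeffding argument, so I will describe the argument for $A$ and then indicate the symmetric change for $B$. Fix $\lambda\in\bar{\mathcal L}$ and condition on $X_0^{(\lambda)}=x$, so $f(X_0^{(\lambda)})=f(x)$ is a constant. Write $m=\tau(x)$ and consider the one-step increments $D_i := f(X^{(\lambda)}_i)-f(X^{(\lambda)}_{i-1})$ for $i=1,\dots,m$; by the bounded-increment assumption \eqref{phif} we have $|D_i|\le\phi_f$. The idea is to compare the actual increment $f(X^{(\lambda)}_m)-f(x)=\sum_{i=1}^m D_i$ with its conditional expectation, which by the instability drift condition \eqref{unstable} is at least $\delta\,\tau(x)$ once $|x|\ge\kappa$ — and here I would first note that since $\tau(x)=cf(x)+d$ grows with $f(x)$, for $f(x)$ large enough the linear-in-$f(x)$ threshold $\tfrac{3\delta}{4}f(x)$ sits comfortably below the genuine drift $\delta\tau(x)\approx \delta c f(x)$ (choosing $c$ large, or more robustly just noting $\delta\tau(x)-\tfrac34\delta f(x)\ge \text{const}\cdot\tau(x)$ for a suitable constant), so that event $A$ forces the sum $\sum D_i$ to fall short of its mean by an amount proportional to $\tau(x)$.

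\smallskip
Concretely, the steps are: (i) let $g(x):=\mathbb E[f(X^{(\lambda)}_{\tau(x)})-f(x)\mid X^{(\lambda)}_0=x]\ge\delta\tau(x)$ by \eqref{unstable}; (ii) observe that on $A$ we have $f(X^{(\lambda)}_{\tau(x)})-f(x)-g(x)\le \tfrac{3\delta}{4}f(x)-\delta\tau(x) =: -h(x)$, and verify $h(x)\ge \beta_2'\,\tau(x)$ for some constant $\beta_2'>0$ for all $x$ with $|x|\ge\kappa'$ (enlarging $\kappa$ if necessary — recall $\tau(x)=cf(x)+d$ so both sides are affine in $f(x)$, and the coefficient of $f(x)$ in $\delta\tau(x)-\tfrac34\delta f(x)$ is $\delta(c-\tfrac34)$, which is positive provided $c>\tfrac34$; if the user has chosen $c\le\tfrac34$ one still gets a bound of the required exponential form by absorbing constants, so I would remark this briefly); (iii) apply the Azuma--Hoeffding inequality to the martingale differences $D_i-\mathbb E[D_i\mid \mathcal F_{i-1}]$ (each bounded by $2\phi_f$, or one can use the one-sided Hoeffding–Azuma bound directly on $\sum D_i$ around its mean), giving
\[
\mathbb P(A\mid X_0^{(\lambda)}=x)\le \mathbb P\!\left(\textstyle\sum_{i=1}^m D_i - g(x)\le -h(x)\,\middle|\,X_0^{(\lambda)}=x\right)\le \exp\!\left(-\frac{h(x)^2}{2\,\tau(x)\,\phi_f^2}\right)\le \exp\!\left(-\frac{(\beta_2')^2}{2\phi_f^2}\,\tau(x)\right),
\]
where the last step uses $h(x)^2\ge (\beta_2')^2\tau(x)^2$ and cancels one factor of $\tau(x)$. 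This is of the claimed form with $\beta_1=1$ (or any $\beta_1\ge1$ to also cover the finitely many $x$ with $|x|<\kappa'$, for which we can make the bound trivially hold by taking $\beta_1$ large enough) and $\beta_2=(\beta_2')^2/(2\phi_f^2)$.

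\smallskip
For \eqref{BBOUND} the argument is the mirror image: now $\gamma\notin\bar{\mathcal L}$, so either $\gamma$ is $f$-stable — in which case $\mathbb E[f(X^{(\gamma)}_{\tau(x)})-f(x)\mid X^{(\gamma)}_0=x]\le -\delta\sigma<0$ by \eqref{stable} for $|x|\ge\kappa$, hence in particular it is $\le 0$, so event $B$ forces $\sum D_i$ to \emph{exceed} its mean by at least $\tfrac{\delta}{2}f(x)$, which is $\ge \beta_2''\,\tau(x)$ for a constant $\beta_2''>0$ — or $\gamma$ is neither stable nor unstable, but under Assumption~\ref{LocalAssump} every subset of the finite set $\mathcal L$ is either stable or unstable, so a singleton $\{\gamma\}$ with $\gamma\notin\bar{\mathcal L}$ is stable and this case does not arise. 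Then Azuma--Hoeffding applied to the same increments, now in the upper-tail direction, yields $\mathbb P(B\mid X_0^{(\gamma)}=x)\le \exp(-(\beta_2'')^2\tau(x)/(2\phi_f^2))$, and after adjusting the two pairs of constants to a common $(\beta_1,\beta_2)$ (take the larger $\beta_1$ and the smaller $\beta_2$) we get both \eqref{ABOUND} and \eqref{BBOUND} simultaneously.

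\smallskip
The main obstacle, and the one point deserving care rather than routine calculation, is step (ii): making sure the ``slack'' between the true drift and the thresholds $\tfrac{3\delta}{4}f(x)$ and $\tfrac{\delta}{2}f(x)$ is genuinely of order $\tau(x)$ uniformly over the relevant states, since this is exactly what converts the Azuma bound $\exp(-h(x)^2/(2\tau(x)\phi_f^2))$ into the desired $\exp(-\beta_2\tau(x))$ (one power of $\tau(x)$ must survive the cancellation). This relies on the specific affine form $\tau(x)=cf(x)+d$ together with possibly enlarging $\kappa$, and on the fact that the drift bounds \eqref{stable}–\eqref{unstable} are stated with a uniform $\delta$ over all of $\bar{\mathcal L}$ (resp.\ its stable complement); I would flag that the constants $\beta_1,\beta_2$ depend only on $\delta,\sigma,\kappa,\phi_f,c,d$ and not on the particular $\lambda,\gamma$, which is what the lemma needs for the subsequent argument.
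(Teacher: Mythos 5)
Your overall strategy---Azuma--Hoeffding applied to the increments of $f(X^{(\cdot)})$, together with the observation that the slack between the accumulated drift and the thresholds $\tfrac{3\delta}{4}f(x)$, $\tfrac{\delta}{2}f(x)$ is of order $\tau(x)$, so that one power of $\tau(x)$ survives the cancellation in the exponent---is the same as the paper's, and that last point is indeed the crux. However, there is a genuine gap in how you set up the martingale. The drift conditions \eqref{stable} and \eqref{unstable} hold only for states with $|x|\ge\kappa$, and they are single-horizon statements giving $\pm\delta\sigma$ for any $k\ge\sigma$; neither ``$g(x)\ge\delta\tau(x)$'' nor ``$g(x)\le 0$'' follows from them directly. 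One must chain the bound over blocks of length $\sigma$, and the chaining is valid only while the path remains in $\{|x|\ge\kappa\}$. Relatedly, Azuma--Hoeffding controls the deviation of $\sum_i D_i$ from its \emph{predictable compensator} $\sum_i\mathbb E[D_i\mid\mathcal F_{i-1}]$, which is random; replacing it by the deterministic quantity $\delta\tau(x)$ (resp.\ $0$) again requires a pathwise per-block drift bound, which fails on the event that the chain visits $\{|x|<\kappa\}$---an event that is not negligible a priori, since in $\tau(x)=cf(x)+d$ steps of size up to $\phi_f$ the chain can reach that region whenever $c\phi_f\gtrsim 1$. The paper closes this for \eqref{ABOUND} by stopping at $\tau^*=\min\{t\le\tau(x):|X^{(\lambda)}_t|\le\kappa\}$, so that $f(X^{(\lambda)}_{t\wedge\tau^*})$ genuinely is a submartingale with drift, and for \eqref{BBOUND} by invoking Lemma~\ref{hoefd}, whose two exponential terms correspond exactly to paths that never enter $\{|x|\le\kappa\}$ and to the at most $n(w)$ upward excursions from level $\kappa$. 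The cleanest repair of your argument for $B$ is the paper's: apply Lemma~\ref{hoefd} with $w=f(x)$ and $z=\tfrac{\delta}{2}f(x)$, check that both exponents are $\Theta(\tau(x))$, and absorb the polynomial prefactor $n(w)=O(\tau(x))$ into the constants $\beta_1,\beta_2$.

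Two secondary remarks. First, your aside that for $c\le\tfrac34$ ``one still gets a bound of the required exponential form by absorbing constants'' is not correct: if $\delta\tau(x)<\tfrac{3\delta}{4}f(x)$ for large $f(x)$, then $A$ is no longer a lower-tail event relative to the drift and its probability need not be small at all; the lemma implicitly requires the thresholds to sit strictly below the accumulated drift, i.e.\ essentially $c$ large enough relative to the constants $\tfrac34$ and $\tfrac12$ (you were right to flag this, wrong to dismiss it). Second, your handling of the case $\gamma\notin\bar{\mathcal L}$ via Assumption~\ref{LocalAssump} (every singleton is stable or unstable, hence $\{\gamma\}$ is stable) is a correct and worthwhile observation that the paper leaves implicit.
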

\begin{proof}
The bound \eqref{ABOUND}  is a consequence of the Azuma-Hoeffding Inequality. In particular,
\[
\Big\{ 
f\big(X^{(\lambda)}_{\tau(x)}\big) - f\big(X^{(\lambda)}_0\big) 
\le 
\frac{3\delta}{4}f(X_0^{(\lambda)})
\Big\} 
\subset 
\Big\{ 
f\big(X^{(\lambda)}_{\tau^*}\big) - f\big(X^{(\lambda)}_0\big) 
\le 
\frac{3\delta}{4}f(X_0^{(\lambda)})
\Big\}
\]
where $\tau^*=\min\{ t \leq \tau(x) : |X_t^{(\lambda)}|\leq \kappa\}$ for suitably large values of $|x|$. Since $\lambda$ is unstable, $f(X^{(\lambda)}_{t\wedge \tau^*})$ is a sub-martingale with bounded increments and drift $\delta$. Thus we can directly apply the Azuma-Hoeffding Inequality to obtain \eqref{ABOUND}.

The bound \eqref{BBOUND} is a direct consequence of Lemma \ref{hoefd}. In particular, taking $w=f(x)$ and $z=\frac{\delta}{2} f(x)$, the terms in the exponential in statement \eqref{NastyBound} of Lemma \ref{hoefd} are such that 
\begin{align*}
&  \frac{(z-\alpha_1(w))^2}{2\alpha_2(w)} \sim \left[\frac{(\frac{\delta}{2c}+1)^2}{2(\phi+\delta)^2 \sigma} \right]\tau(x)\,, \\ 
 & \frac{(z -\alpha_3(w))^2}{2 \alpha_4(w)} \sim
 \left[ \frac{(1+\frac{\delta}{2})^2}{2c^2\phi\sigma} \right] \tau(x)\,.
\end{align*}
Further, $n(x)=O(\tau(x))$. This in turn implies that there are constants $\beta_1$ and $\beta_2$ such that \eqref{BBOUND} holds.
\end{proof}

We let $(Y,\Lambda)=(x,\lambda)$ be the initial state of Algorithm \ref{ALG3}, we let $\gamma\notin \bar{\mathcal{L}}$ be the parameter selected in Step (i) of Algorithm \ref{ALG3}, and we let $(Y',\Lambda')$ the state of Algorithm \ref{ALG3} after its first iteration. Given this notation, the following lemma, which is a consequence of the above result, shows that with high probability $\Lambda'=\lambda$ and that over this step $\tau(x)$ is increased by a positive fraction.

\begin{lemma}\label{constpos} There exists positive constants $\epsilon$, $\beta_3$, and $\beta_4$ such that
\[
\mathbb P (\tau(Y') \geq \tau(x)(1+{\epsilon}) , \Lambda'=\lambda) \geq 1- \beta_3 \e^{-\beta_4 \tau(x)}\,.
\] 
\end{lemma}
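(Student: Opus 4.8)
The plan is to combine the two tail bounds of Lemma~\ref{AzLem} into a single statement about the first iteration of Algorithm~\ref{ALG3}. Recall that in the local-search step we sample $x' = X^{(\lambda)}_{\tau(x)}$ and $y = X^{(\gamma)}_{\tau(x)}$, and then accept $(y,\gamma)$ with probability $\e^{\eta[f(y)-f(x')]_-}$. I would condition on the \emph{good event} $A^c \cap B^c$, where $A$ and $B$ are as in Lemma~\ref{AzLem}. On $A^c$ we have $f(x') - f(x) > \tfrac{3\delta}{4} f(x)$, hence $f(x')$ is strictly larger than $f(x)$ by a positive fraction; on $B^c$ we have $f(y) - f(x) < \tfrac{\delta}{2} f(x)$. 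Subtracting, on $A^c \cap B^c$,
\[
f(y) - f(x') < \tfrac{\delta}{2} f(x) - \tfrac{3\delta}{4} f(x) = -\tfrac{\delta}{4} f(x) < 0\,,
\]
so the Metropolis acceptance probability $\e^{\eta[f(y)-f(x')]_-} = \e^{-\eta \delta f(x)/4}$ is \emph{exponentially small} in $f(x)$ (equivalently in $\tau(x)$, since $\tau(x) = c f(x) + d$). Thus on $A^c \cap B^c$ the event $\{\Lambda' = \lambda\}$ (i.e.\ rejection) fails only with probability at most $\e^{-\eta\delta f(x)/4}$, which can be absorbed into the exponential error term.

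Next I would handle the claim $\tau(Y') \ge (1+\epsilon)\tau(x)$. When rejection occurs, $Y' = x'$, so $\tau(Y') = c f(x') + d$. On $A^c$, $f(x') > f(x) + \tfrac{3\delta}{4}f(x) = (1+\tfrac{3\delta}{4})f(x)$. Hence
\[
\tau(Y') = c f(x') + d > c\big(1+\tfrac{3\delta}{4}\big)f(x) + d = \big(1+\tfrac{3\delta}{4}\big)\tau(x) - \tfrac{3\delta}{4}d \ge (1+\epsilon)\tau(x)
\]
for a suitable $\epsilon \in (0, \tfrac{3\delta}{4})$ once $\tau(x)$ is large enough that the constant $\tfrac{3\delta}{4}d$ is dominated; for smaller $\tau(x)$ the exponential error term in the statement can be taken $\ge 1$ (by choosing $\beta_3$ large), so the bound is vacuously true there. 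Putting the two pieces together via a union bound,
\[
\mathbb{P}\big(\{\tau(Y') \ge (1+\epsilon)\tau(x)\} \cap \{\Lambda' = \lambda\}\big) \ge 1 - \mathbb{P}(A) - \mathbb{P}(B) - \e^{-\eta\delta f(x)/4} \ge 1 - \beta_3 \e^{-\beta_4 \tau(x)}\,,
\]
taking $\beta_4 = \min\{\beta_2, \eta\delta/(4c)\}$ and $\beta_3$ a suitable constant (at least $2\beta_1 + 1$, and large enough to cover the regime of small $\tau(x)$).

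I do not expect a serious obstacle here; the lemma is essentially bookkeeping on top of Lemma~\ref{AzLem}. The one point requiring a little care is the interaction between the additive constant $d$ in $\tau(x) = cf(x)+d$ and the multiplicative gain $\tfrac{3\delta}{4}$: one must choose $\epsilon$ strictly less than $\tfrac{3\delta}{4}$ and then either restrict to $\tau(x)$ large (absorbing the finitely many small values into $\beta_3$) or verify the inequality holds for all $\tau(x) \ge \tau(x_0) = cf(x_0)+d > 0$ directly. A secondary point is that Lemma~\ref{AzLem} was stated with the bound $\tfrac{3\delta}{4}f(X_0^{(\lambda)})$ growing \emph{proportionally} to $f(x)$, whereas the drift over $\tau(x)$ steps is of order $\delta\,\tau(x) = \delta c f(x)$; since these differ only by the constant $c$, the Azuma--Hoeffding estimate still gives an exponential-in-$\tau(x)$ tail, which is all that is needed.
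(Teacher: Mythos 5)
Your argument is correct and follows essentially the same route as the paper: condition on the good event $A^c\cap B^c$ from Lemma~\ref{AzLem}, observe that there $f(y)-f(x')<-\tfrac{\delta}{4}f(x)$ so the Metropolis rule rejects $\gamma$ except with probability $\e^{-\eta\delta f(x)/4}$, and that on rejection $f(Y')=f(x')\geq(1+\tfrac{3\delta}{4})f(x)$ yields the $(1+\epsilon)$ growth of $\tau$. Your extra care with the additive constant $d$ in $\tau(x)=cf(x)+d$ is a point the paper handles only implicitly via $f(x)=\Theta(\tau(x))$, but it changes nothing of substance.
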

\begin{proof}
Let $A$ and $B$ be the events specified in Lemma \ref{AzLem}, above. Given the event $A^c$, for $\Lambda'=\lambda$ we have that $f(Y') \geq (1+3\delta/4) f(x)$. Since $f(x) = \Theta( \tau(x))$, for an appropriate choice of $\epsilon>0$ (dependent only on $\delta$), we have that
\[
\tau(Y') \geq  (1+\epsilon )\,\tau(x).
\]
Now given this choice of $\epsilon$ the following equalities hold,
\begin{align*}
&\mathbb P ( \tau(Y') \geq \tau(x) (1+\epsilon ),~\Lambda'=\lambda) \\
\geq & \mathbb P ( \tau(Y') \geq \tau(x) (1+\epsilon ) ,~\Lambda'=\lambda\; |\; A^c, B^c)\,\mathbb P ( A^c \cap B^c)\\
 \geq & \Big(1- \e^{-\frac{1}{4}\delta f(x)}\Big)\Big( 1- 2\beta_1 \e^{-\beta_2 \tau(x)} \Big)
\end{align*}
The second inequality follows from definition of the Metropolis rule, \eqref{LocalChangeRule}, and from Lemma \ref{AzLem}. From this it is clear there are appropriate constants $\beta_3$ and $\beta_4$, as required.
\end{proof}

\begin{proof}[Proof of Theorem~\ref{thm:unstable} for local-search algorithm]
We see that under Assumption \ref{LocalAssump}, the local search algorithm is such that the process $\Lambda_k$ will eventually visit a state in $\bar{\mathcal L}$. 
To see this note that, from any state $(Y_k,\Lambda_k)=(x,\lambda)$ with $\lambda\notin \bar{\Lambda }$, by irreducibility and positive recurrence of $X^{(\lambda)}$ and the fact $\lambda \in \mathcal B_\lambda$, there is a positive probability of reaching state $(x_0,\,\lambda)$. Further, by \eqref{Reduce} there is a positive probability of reaching a state $(x_0,\,\mu)$ for any $\mu \in \bar{\mathcal L}$. From that state, again by the irreducibility of $X^{(\mu)}$, there is a positive probability of reaching a state $x'$ with $\tau(x')> \tau$ for any specified value of $\tau$. Once such a state is reached we now show that there is a positive probability of $\Lambda_k$ remaining in $\bar{\Lambda}$ indefinitely.

Let $E_k$ be the following event 
\[
E_k := \{ \Lambda_k \in \bar{\mathcal{L}},~\tau_k \geq \tau_{k-1}\,(1+\epsilon)  \}\,.
\]
Then, by Lemma \ref{constpos},
\begin{align}
\mathbb P \Big(\bigcap_k E_k \Big) \geq 1 - \sum_k \mathbb P \left( E_k^c\,\Big|\,\bigcap_{k' < k} E_{k'}  \right)  
\geq 1- \sum_k 2 \beta_1 \e^{-\beta_2 \tau_0\,(1+\epsilon)^k}\,.
\end{align}
Thus for suitably large initial values of $\tau_0$ we have that
\[
\mathbb P (\Lambda_k \in \bar{\mathcal L},~\tau_k \geq \tau_{k-1}\,(1+\epsilon)~\forall~k) > 0\,.
\]
Hence, eventually it must occur that the algorithm evolves only according to unstable parameter choices.
\end{proof}

\section{Examples}\label{Cases}
This section presents five example applications of the algorithm, where each example is designed to highlight aspects of the algorithm's implementation and use.  More specifically, we subsequently consider a network of parallel queues, a tandem queueing system, the Rybko--Stolyar network, a network of input queued switches, and a random access network (RAN). 

Throughout the section we use $\mathcal U (A)$ as an indicator variable for the algorithm declaring the set $A$ unstable.

\subsection{Parallel queues with randomly varying connectivity}
For our first example we  extend the illustrative example used in Section~\ref{Sec3}. Consider a system where $N$ parallel queues compete for the service of a single server. Time is slotted, and in each time slot $t \in \mathbb Z_+$ queue $i \in \{1, \dots, N\}$ is connected to the server with probability $0.8$. Similarly, at the beginning of each time slot an arrival occurs at each queue with probability $p \in [0,\,1]$, so that there are at most $N$ arrivals to the system in any particular time slot. After the arrivals have occurred and connectivity is determined, the longest non-zero queue that is connected to the server is reduced by one with probability $\frac{4}{5}$ --- a policy called \emph{longest queue first} (LQF). The system is therefore a discrete time Markov chain $X^{(p)}$ taking values in $\mathbb Z_+^N$. We illustrate this system in Figure~\ref{fig:ParallelSystem}. 

\begin{figure}[h]
	\centering
	\includegraphics{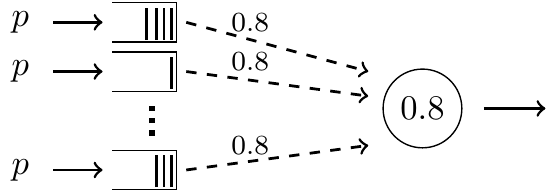}
	\caption{A parallel queueing system with randomly varying connectivity. }
	\label{fig:ParallelSystem}
\end{figure}

The stability region for this irreducible Markov chain is known. In particular, from Corollary~1 in \cite{Tassiulas1993} we have that for any $p \le \ell^*$, where
\[
\ell^* = \frac{4}{5}\left(\frac{1-(1/5)^N}{N}\right)\,,
\]
the limiting distribution of $X^{(p)}$ exists, and otherwise does not. 

Therefore any $\mathcal L \subset [0, \infty)$ that shares an intersection with $[\ell^*, \infty)$ of positive measure, is unstable under our Definition~\ref{StableDef}. Taking $N= 4$ and $\mathcal L$ to be of the form $[0, \ell)$, we therefore have instability for approximately those instances when $\ell > 0.2$, that is $\ell^* \approx 0.1997$. Furthermore, Theorem~1 in \cite{Tassiulas1993} shows that the system is stable under the LQF policy for the network's subcritical region --- a property known as \emph{maximal stability}. This property is well known to hold for single-hop networks under LQF and its generalization the Max Weight-$\alpha$ algorithm (see e.g. \cite{mckeown1999achieving,TaEp91}). 

In Figure~\ref{fig:ParallelK} we give the proportion of simulation runs out of 1000 where the parameter set $[0,\,0.3]$ is declared unstable by the local and global algorithms as $k^*$ is increased. Recall that $k^*$ is the total number of steps the algorithm is permitted to take in $\mathcal X$  before a value of $f(Y_k)$ is compared to $q^{(\alpha)}_k$. Now, the greatest change in $f$ occurs when there are no services and all queues experience an arrival, so that $\phi = 4$.  We assume $\kappa = 4$ and $\sigma = 1$.  It can be seen that longer simulation runs are more likely to declare the system unstable, with an apparent almost sure declaration of instability in the limit. In this case, the local algorithm approaches this limit far more rapidly than the global algorithm. 

\begin{figure}[h]
	\centering
	\includegraphics{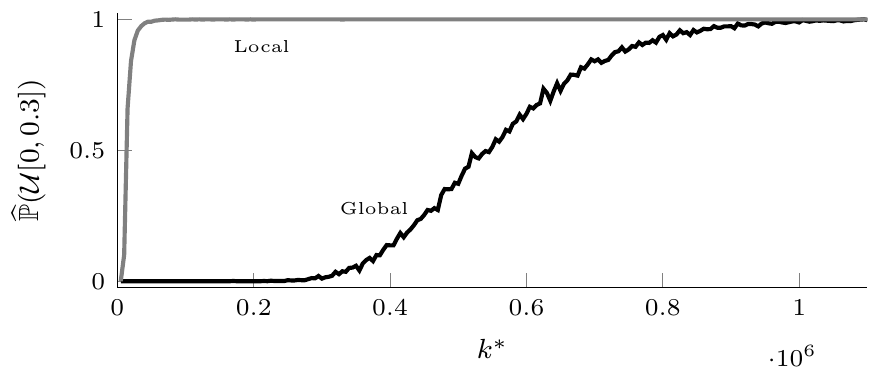}
	\caption{Parallel system $L_1$-stability tests for $p$ sampled from the set $\mathcal L = [0,\,0.3]$ for $k^* \in (0, 10^6]$ with $\tau(x) = 0.5|x|+1$, $\delta = 0.01$, $\sigma = 1$, $\kappa = 4$ and $\epsilon = 0.01$.}
	\label{fig:ParallelK}
\end{figure}

Figure~\ref{fig:ParallelD} explores the effect of the chosen $\delta$ on an unstable declaration. The figure gives the proportion of simulation runs out of 100 where the parameter set $[0,\,0.21]$ is declared unstable by the local and global algorithms. Recall that the definition of stability we use compares the drift of the process under consideration with a linear function that depends on $\delta$. As discussed in Section~\ref{Sec3}, with reference to Figure~\ref{fig:SimpleHypTest}, if a parameter is unstable for a particular $\delta$, then this implies instability for all higher values of $\delta$.  This is because a $W$ process parameterized by a particular $\delta$ will stochastically dominate all $W$ processes parameterized by higher choices of $\delta$. Figure~\ref{fig:ParallelD} demonstrates that this occurs for both the global and local search algorithms. Again we see that the local algorithm appears to perform better ---  in this example it has detected lower values of downward drift when $k^* = 10^5, 10^6$. 

\begin{figure}[h]
	\centering
	\includegraphics{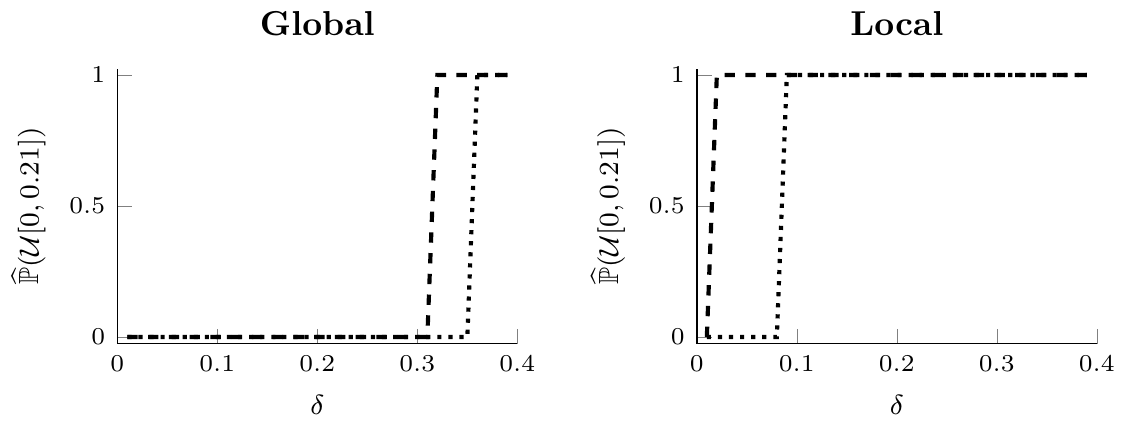}
	\caption{Parallel system $L_1$-stability tests for $p$ sampled from the set $\mathcal L = [0,\,0.21]$ for $\delta \in [0.01,\,0.4]$ with $\tau(x) = 0.5|x|+1$, $\sigma = 1$, $\kappa = 4$, $\epsilon = 0.01$ and $k^* = 10^5$ (dotted), $10^6$ (dashed).}
	\label{fig:ParallelD}
\end{figure}

In Figure~\ref{fig:ParallelL} we give the proportion of simulation runs out of 100 where the  parameter set $[0,\,\ell]$ is declared unstable for a range of $\ell$. It can be seen that longer simulation runs declare the system unstable for a larger proportion of the $\ell$ values that give an unstable $\mathcal L$. The figure provides evidence that in the discrete time case the algorithm is performing as it is intended to, in the next section we move to a continuous time example. 

\begin{figure}[h]
	\centering
	\includegraphics{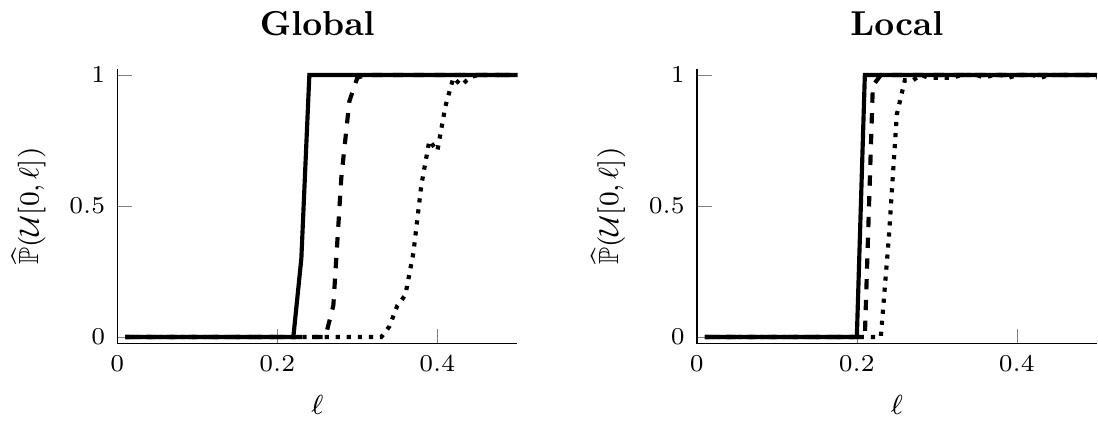}
	\caption{Parallel system $L_1$-stability tests for $p$ sampled from sets of the form $\mathcal L = [0,\,\ell]$ with $\tau(x) = 0.5|x|+1$, $\delta = 0.05$, $\sigma = 1$, $\kappa = 4$, $\epsilon = 0.01$ and $k^* = 10^5$ (dotted), $10^6$ (dashed), $10^7$ (solid).}
	\label{fig:ParallelL}
\end{figure}

\subsection{Tandem queues}
Our next example is the tandem queueing system. We will contrast the results for a Markov system consisting of two $M/M/1$ queues with a system that has renewal arrivals and i.i.d. service times at both nodes (which is not Markov). In the former system jobs arrive to a server according to a Poisson process with rate one, they are then processed one at a time, first come first served (FCFS),  with {\sf Exp}$(\mu_1^{-1})$ service times, before being sent to a subsequent server where they are again processed one at a time, FCFS, with service time {\sf Exp}$(\mu_2^{-1})$. It is well known that the output from the first server to the second corresponds to a Poisson process with rate $\min\{1,\,\mu_1^{-1}\}$. Consequently, the system is $L_1$-stable for $(\mu_1,\,\mu_2) \in [0,\,1]^2$, and $L_1$-unstable otherwise. In the latter system we assume the times between arrivals to the first server are Erlang distributed with rate parameter $1/2$ and shape parameter 2. Jobs are also served FCFS and must pass through the first server before being sent to the second. In this case the service times are Weibull distributed with shape parameter 2, so that they have distribution function $(1-\exp(-(x/\mu)^k)$ for $x\ge 0$, with $k = 2$ and scale parameters $\mu= \mu_1$ and $\mu = \mu_2$ for the first and second server, respectively. Note that in both cases the mean time between arrivals is 1, that the mean service times are $\mu_1$ and $\mu_2$ for the former case, and are $\Gamma(1.5)\mu_1 \approx 0.8862\, \mu_1$ and $\Gamma(1.5)\mu_2 \approx 0.8862\, \mu_2$ in the latter case. 

To apply our discrete time framework to these continuous time systems, we have used the embedded process corresponding to the sequence of states recorded immediately after each jump (which is  Markovian for the $M/M/1$ system, and non Markovian for the system with renewal arrivals and i.i.d.\ service times). In Figure~\ref{fig:TandemMM} and Figure~\ref{fig:TandemGG} we are testing parameter sets of the form $(\mu_1,\,\mu_2) \in \mathcal L = [0,\, \ell]^2$, and as such sets with $\ell > 1$ are $L_1$-unstable in the Markov case and approximately $\ell > 1.1284=(0.8862)^{-1}$ in the non Markov case.  In both the global and local cases it is clear that the test converges to an accurate declaration of instability over $\ell \in (0.5,\, 1.5)$ as $k^* \to \infty$. The figures provides evidence that it is possible to relax the discrete time and Markov assumptions we made in the theoretical development of our algorithm. 

\begin{figure}[h]
	\centering
	\includegraphics{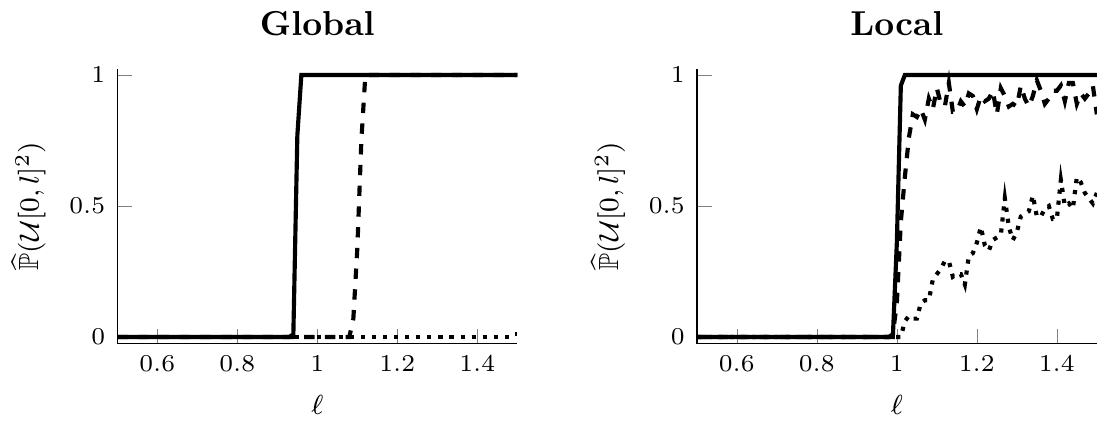}
	\caption{Tandem $M/M/1$ system $L_1$-stability tests for $(\mu_1,\,\mu_2)$ sampled from sets of the form $\mathcal L = [0,\,\ell]^2$ with $\tau(x) = 0.5|x|+1$, $\delta = 0.05$, $\sigma = 1$, $\kappa = 1$, $\epsilon = 0.01$ and $k^* = 10^5$ (dotted), $10^6$ (dashed), $10^7$ (solid).}
	\label{fig:TandemMM}
\end{figure}

\begin{figure}[h]
	\centering
	\includegraphics{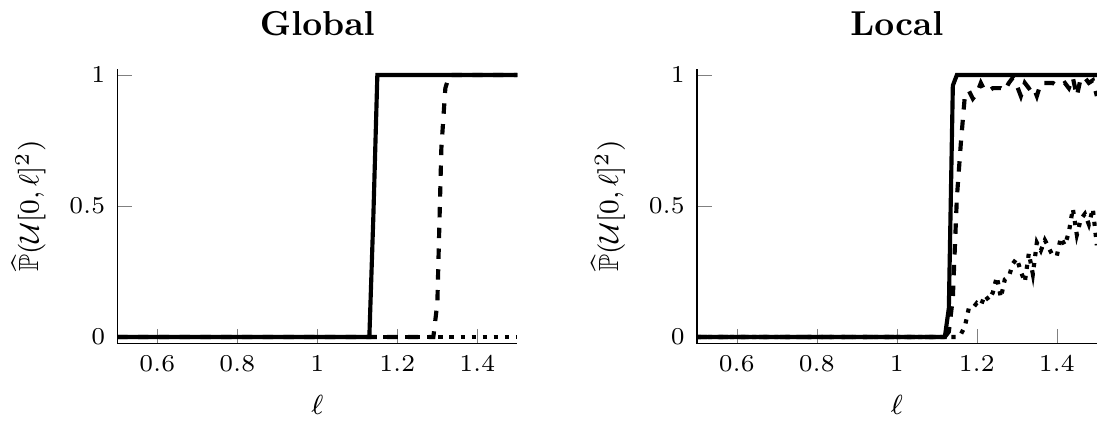}
	\caption{Non-Markovian tandem system $L_1$-stability tests for $(\mu_1,\,\mu_2)$ sampled from sets of the form $\mathcal L = [0,\,\ell]^2$ with $\tau(x) = 0.5|x|+1$, $\delta = 0.05$, $\sigma = 1$, $\kappa = 1$, $\epsilon = 0.01$ and $k^* = 10^5$ (dotted), $10^6$ (dashed), $10^7$ (solid).}
	\label{fig:TandemGG}
\end{figure}

 Further, we are stretching the original modeling framework since there is no fixed $\sigma$ after which the systems exhibit unstable behavior. The required number of steps before an upward drift is expected to occur depends on the system state. Consequently, over short time periods, unstable parameter choices may appear stable, e.g., in the Markov system, when the second server has a very large queue but a parameter selection with $\mu_1>1$  and $\mu_2 < 2-\mu_1$ is made.  Nonetheless, asymptotically both systems are expected to become infinitely large due to the first queue being unstable, and through the $\tau$ function our algorithm is able to maintain accurate prediction.  Due to this, in systems of this kind the choice of $c$ in the $\tau$ function may have an important impact on the algorithm's performance. 

In Figure~\ref{fig:TandemC} we perform instability tests on $[0,\,1.2]$ for a range of $c$. For the global algorithm the choice of $c$ can have a substantial impact on performance, for $k^*=10^6$ a high value of $c$ is required to obtain a high level of accuracy. For the local algorithm, however, the choice of $c$ does not appear to have as much of an effect as the choice of $k^*$. This suggests that if $k^*$ is limited by computational resources, then it is preferable to use the global algorithm with a high $c$ --- particularly if the system is suspected of exhibiting oscillatory behavior. 

\begin{figure}[h]
	\centering
	\includegraphics{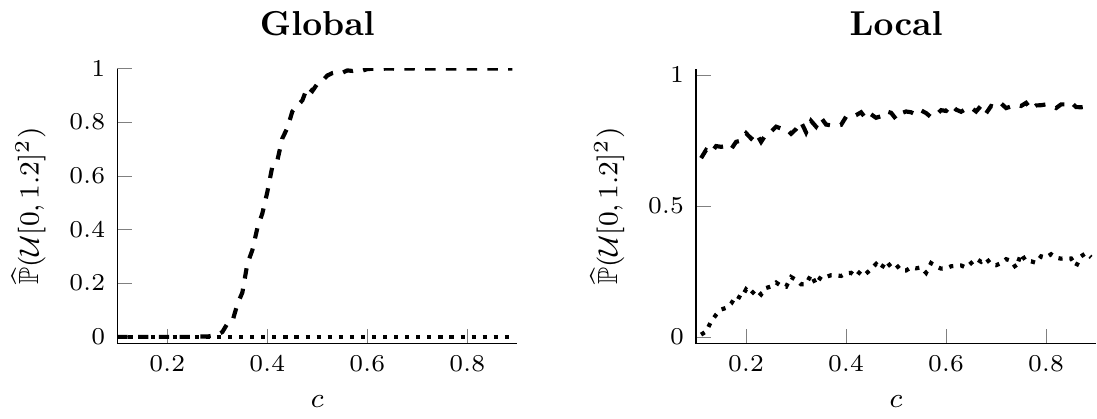}
	\caption{Tandem $M/M/1$ system $L_1$-stability tests for $(\mu_1,\,\mu_2)$ sampled from $\mathcal L = [0,\,1.2]^2$ with $\tau(x) = 0.5|x|+1$, $\delta = 0.05$, $\sigma = 1$, $\kappa = 1$, $\epsilon = 0.01$, $k^* = 10^5$ (dotted), $10^6$ (dashed), and $c \in (0.1, 0.6)$.}
	\label{fig:TandemC}
\end{figure}

\subsection{Rybko--Stolyar queueing network}
The Rybko--Stolyar queueing network, displayed in Figure~\ref{fig:RS}, was introduced in \cite{RySt93} as an example of a work-conserving queueing network that can be unstable for sub-critical parameter choices.  To the best of our knowledge, matching necessary and sufficient conditions for instability are not known.

This queueing network consists of two stations, each with a single server, which we call the left and right stations. All customers served at the left station require ${\sf Exp}(\mu_l)$ service time and all customers served at the right station require ${\sf Exp}(\mu_r)$ service time. There are two classes of customers. The first class enters the network according to a Poisson process at rate $\lambda$ where it is served  at the left station before proceeding to the right station to be served, and  from here it departs the network. Jobs from the second class also enter the network at rate $\lambda$, are served at the right station, proceed to be served  at the left station, and then depart from the network.  Within each customer class the customers are served on a FCFS basis. Between the customer classes, however, there is priority:  jobs being served at their second station (bold in Figure~\ref{fig:RS}) have priority over jobs being served at their first station. 

In \cite{RySt93} it is shown that for $\lambda$ equal to one and $\mu_r>0$, a sufficient condition for instability is $\mu_l < 2$.  In Figure~\ref{fig:RSHypTest} we consider the situation where $\mu_l$ is sampled from sets of the form $(\ell, \ell+1)$ for $\ell \in (1,3)$, with $\lambda = 1$ and $\mu_r = 4$. Due to the result from  \cite{RySt93} we expect that $\ell \in (1,2)$ will be returned as unstable by the algorithm. This occurs for $k^*$ equal to $10^7$. Interestingly, for $\ell > 2$ we never reject the null hypothesis of stability, suggesting that $\mu_l < 2$ is also a necessary condition for instability with $\lambda = 1$ and $\mu_r>0$. In this case the local algorithm appears to outperform the global algorithm. The estimates for the local algorithm do, however, exhibit a large amount of variance (over the 100 sample paths used to generate the figure). 

\begin{figure}[h]
	\centering
	\includegraphics{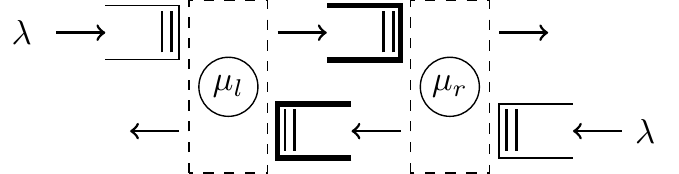}
	\caption{The Rybko--Stolyar network. }
	\label{fig:RS}
\end{figure}

\begin{figure}[h!]
	\centering
	\includegraphics{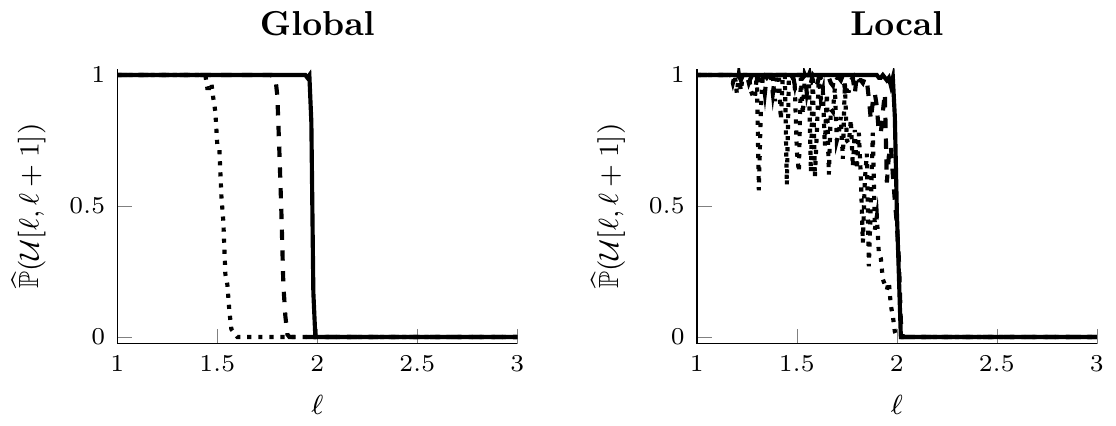}
	\caption{Rybko--Stolyar system $L_1$-stability tests for $\mu_l$ sampled from sets of the form $\mathcal L = [\ell,\,\ell +1]$ for $k^*= 10^5$ (dotted), $10^6$ (dashed), $10^7$ (solid) with $\lambda = 1$, $\mu_r = 4$, $\tau(x) = 0.5|x|+1$, $\delta = 0.05$, $\phi = \kappa = \sigma =1$ and $\epsilon = 0.01$.}
	\label{fig:RSHypTest}
\end{figure}
\subsection{A switch network}
Our next example is a network of input-queued switches which was investigated by Andrews and Zhang in \cite{AndrewsZhang2003}. This discrete time model provides an example where the LQF policy is not maximally stable. In this simulation study, we are able to demonstrate the use of our algorithm on a model which exhibits complex queueing dynamics on a 52 dimensional state space. Again, unlike the parallel queue or tandem models considered earlier, the explicit form of the stability region of this model is unknown. 

The model we are considering is illustrated in Figure~\ref{fig:Switch}.  It has four main switches with labels $A$, $B$, $C$, and $D$ and four auxiliary switches with labels $A'$, $B'$, $C'$, and $D'$. Each of the main switches has ten external input queues to which a packet arrival occurs instantaneously at the beginning of each time slot independently and with probability $r/30$. 

Packets are given a type according to the switch at which they first arrive, for example packets starting at $A$ are of type 1; packets are routed through the network according to their type.  After these arrivals the longest of the 12 queues at each main switch and of the three queues at each auxiliary switch sends a single packet to the corresponding input queue of another switch or are removed from the system (as designated by Figure~\ref{fig:Switch}). Packets sent in a time slot arrive at their destination at the beginning of the next time slot. 

\begin{figure}[h]
	\centering
	\includegraphics{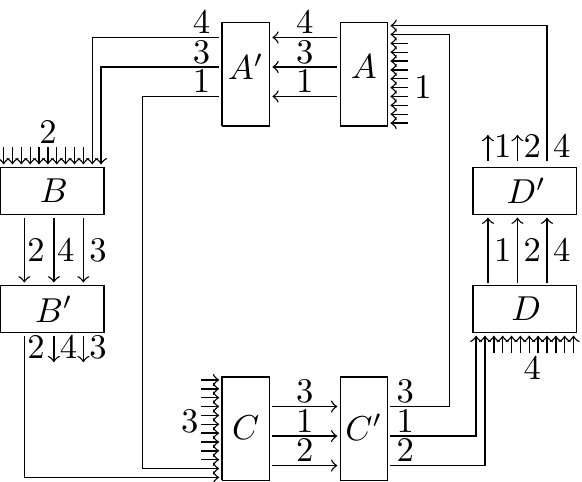}
	\caption{A network of input queued switches. }
	\label{fig:Switch}
\end{figure}

In Figure~\ref{fig:SwitchHypTest} we test for $L_1$-instability in $r$ on parameter sets of the form $\mathcal L_s = [0.5, \ell]$. Due to the large size of the system we have chosen $\delta = 5$. We set $\phi = 40$,   $\tau(x) = 0.5\,|x|+1$, and  $\kappa = \sigma = 1$. Although the stability region for this model is not yet known, this figure provides strong (statistical)  evidence that the set $[0,\,0.95]$ is unstable. We have thus demonstrated that our algorithm can be used to provide statistical evidence that the LQF policy is not necessarily maximally stable in multi-hop settings. In this case the global algorithm appears to perform much better, suggesting that $k^*=10^7$ is not great enough for the local algorithm to start performing well. 

\begin{figure}[h]
	\centering
	\includegraphics{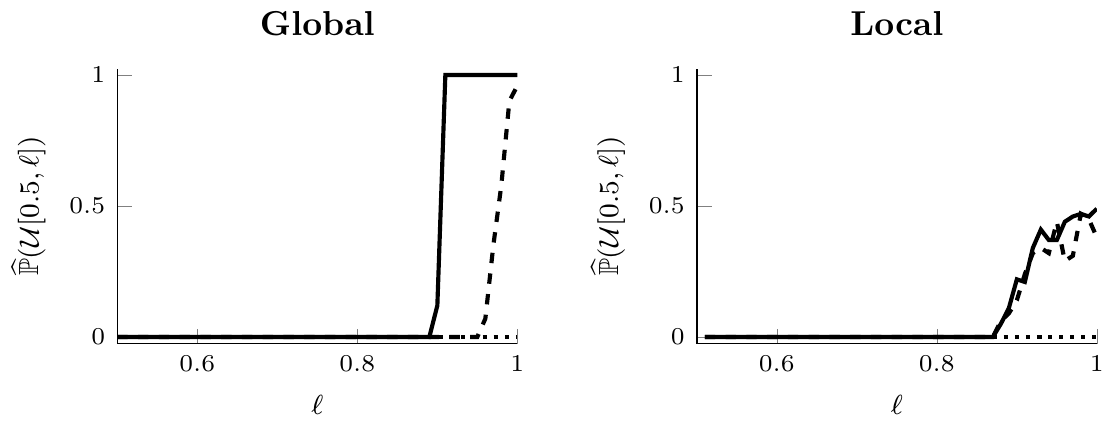}
	\caption{Network of input queued switches $L_1$-stability tests for $r$ sampled from sets of the form $\mathcal L = [0,\,\ell]$ or $\mathcal L = [0.5,\,\ell]$ for $k^*= 10^5, 10^6, 10^7$.}
	\label{fig:SwitchHypTest}
\end{figure}

It may be the case the ratio $|\bar{\mathcal L}|/{\mathcal L}$ has a substantial impact on performance in finite time. Figure~\ref{fig:SwitchHypTest2} explores this relationship by testing for stability of $[\ell, 0.95]$ over a variety of $(k^*, \ell)$ combinations. Intuitively, this ratio should have a greater impact on performance of the local algorithm than the global algorithm. Instead, the figure indicates highly similar (poor) performance over the varying combinations of $(k^*, \ell)$, with some degradation of accuracy for very low $k^*$. While for the global algorithm fixing either $k^*$ or $\ell$ and then increasing the other leads to substantial increases in accuracy.  

\begin{figure}[h]
	\centering
	\includegraphics{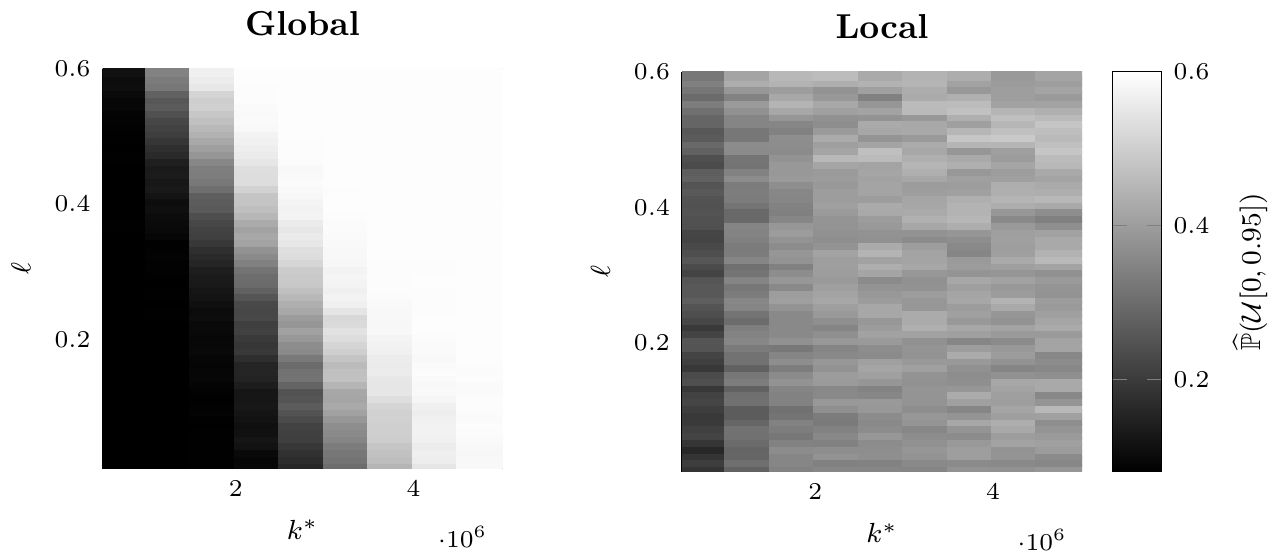}
	\caption{Network of input queued switches $L_1$-stability tests for $r$ sampled from sets of the form $\mathcal L = [\ell,\,0.95]$ for $k^*\in(0,\,7\cdot 10^6]$,  $\tau(x) = 0.5|x|+1$, $\delta = 5$, $\phi = 40$, $\kappa = \sigma = 1$.}
	\label{fig:SwitchHypTest2}
\end{figure}

\subsection{A broken diamond random access network}
So far we have presented classical examples that facilitated the assessment of the algorithm's performance. In our final example we   address a contemporary area of research initiated by \cite{Ghaderi2014}, exploring the stability properties of a wireless network with a queue-based random-access algorithm.  
We focus on a network consisting of nodes $\{1, 2, \dots, 6\}$, some of which are connected by edges, as depicted in Figure~\ref{fig:RAN} (where it is remarked that \cite{Ghaderi2014}
is set in a more general context). In our model we assume that nodes which are connected by an edge interfere with each other, that is, they cannot transmit simultaneously. 

\begin{figure}[h]
	\centering
	\includegraphics{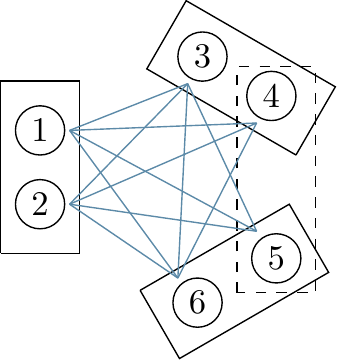}
	\caption{A broken diamond random access network. }
	\label{fig:RAN}
\end{figure}

In this continuous time model, packets arrive to node $i$ according to a Poisson process with rate $\lambda_i$ and take ${\sf Exp}(\mu_i)$ time to transmit, so that the traffic intensity at node $i$ is $\rho_i = \lambda_i/\mu_i$. Let $U(t) \in \{0,1\}^6$ be a vector of indicator variables representing which nodes are active at time $t$ and $X(t) \in \mathbb \{0, 1, \dots\}^6$ be a vector representing the number of packets at each node at time $t$. 

In order to fully describe the evolution of this process, we must specify how nodes decide when to attempt transmission of packets. Whenever a node is not being interfered with it will wait an ${\sf Exp}(\nu_i)$ amount of \emph{back-off period}.  At this point, it will then begin transmitting with probability $\phi_i(X_i(t))$, where $\phi_i(0) = 0$, and otherwise it will begin another back-off period with the same distribution. After each successful transmission, node $i$ will release the medium and begin a back-off period with probability $\psi_i(X_i(t^-))$, with $\psi_i(1)=1$ for all $i$, and otherwise begin another transmission. 

It is easy to see that $(X, U)$ is a Markov process evolving according to the rates given in Table~\ref{tab:RAN}. Note that here $\bar u_i = 0$ indicates that none of the neighbors of $i$ is transmitting.

\begin{table}[h]
\caption{Transition rates of the random access network network in Figure~\ref{fig:RAN}. }
\begin{center}
\begin{tabular}{l |c | c}\label{tab:RAN}
\hspace{7mm}Transition & Rate & States\\
\hline \hline
$(x,\, u) \rightarrow (x+e_i,\, u)$ & $\lambda_i$ & All\\
$(x,\,u) \rightarrow (x,\,u+e_i)$ & $\nu_i\,\phi_i(x_i)$ & $x_i >0$, $u_i=0$, $\bar u_i = 0$\\
$(x,\, u) \rightarrow (x-e_i,\,u)$ & $\mu_i\,(1-\psi_i(x_i))$ & $x_i \ge 1$, $u_i=1$\\
$(x,\,u) \rightarrow (x-e_i,\, u-e_i)$ & $\mu_i\,\psi_i(x_i)$ & $x_i \ge 1$, $u_i=1$\\
\hline \hline
\end{tabular}
\end{center}
\end{table}

Consider the network in Figure~\ref{fig:RAN}, and suppose that $\phi_i(x) \equiv 1$, $x>0$, and $\psi_i(x) = \mathrm{o}(x^{-\gamma})$, with $\gamma > 1$.  Let 
\[(\rho_1, \rho_2, \rho_3, \rho_4,\rho_5, \rho_6) = \rho\,(\kappa_1, \kappa_2, \kappa_3, \kappa_3-\alpha, \kappa_6-\alpha, \kappa_6),\] with $(\kappa_1 \vee \kappa_2)+\kappa_3+\kappa_6 = 1$, and $0< \alpha < (\kappa_3 \wedge \kappa_6)$. Then the main result of Ghaderi et al.\ in \cite{Ghaderi2014} implies that there exists a constant $\rho^*(\kappa, \alpha) < 1$, such that for all $\rho \in (\rho^*(\kappa, \alpha),\,1]$ the Markov process is transient under the given parameter conditions. 

We now consider the example network from the simulation section of \cite{Ghaderi2014}. The relative traffic intensities are taken to satisfy $\kappa_1 = \kappa_2 = \kappa_3 = 0.4$ and $\kappa_6 = 0.2$ with $\alpha=0$. Further, $\phi_i(x) \equiv 1$, $x\ge1$, and $\psi_i(x) = (1+x)^{-2}$. The authors note that it is `difficult to make any conclusive statements concerning stability/instability based on simulation results alone'. They do, however, remark that for these parameter choices and $\rho = 0.97$, their simulated sample paths appear to demonstrate strong signs of instability. 

\begin{figure}[h]
	\centering
	\includegraphics{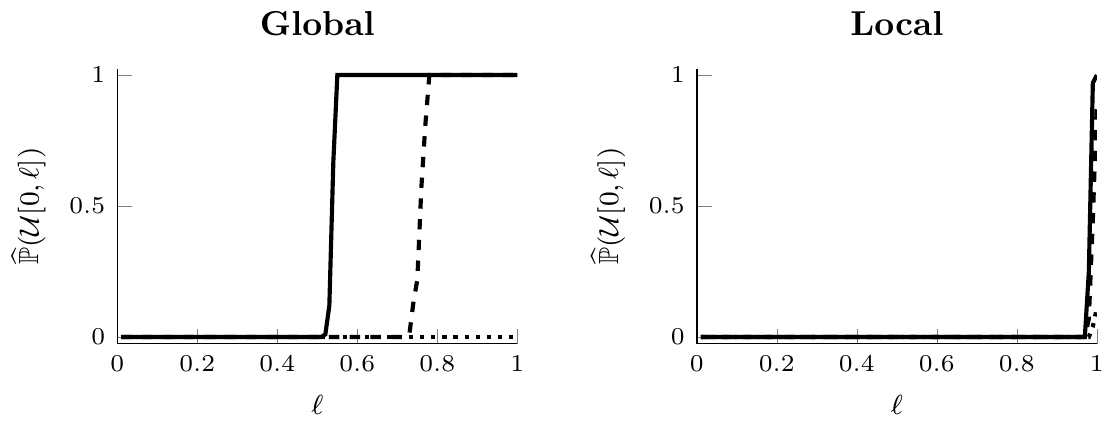}
	\caption{Broken diamond random access network $L_1$-stability tests for sets of the form $\mathcal L = [0,\,\ell]$.}
	\label{fig:RANHypTest}
\end{figure}

In order to perform our stability test, we assume $\kappa = \phi = \sigma = 1$. In Figure~\ref{fig:RANHypTest} we test for $L_1$ instability with $\delta = 0.05$. 
Looking at Figure~\ref{fig:RANHypTest}, which uses our simulation based stability test, we are able to say, with a strong statistically firm footing, that there exists a constant $\rho^*(\kappa, \alpha) < 1$, such that for all $\rho \in (\rho^*(\kappa, \alpha),\,\ell]$ the network is unstable for a range of $\ell$ in approximately $[0.6,\,1]$. This statement expands on the statement of the theorem (for a particular choice of parameters) by allowing for more information to be gained about what values are likely to be possible for $\rho^*(\kappa, \alpha)$. Of course, our statement does not rule out perverse behavior such as the network suddenly exploding after $10^7$ jumps of the process.  It can however be very quickly and easily applied to similar or even vastly more complex networks. We note that the global algorithm is in this case enabling us to make this strong statement, while the local algorithm algorithm only allows us to make the statement for a substantially reduced set of $\ell$. 

\section{Concluding remarks}\label{Disc} 
The main contribution of this paper concerned the development of an automated procedure that determines if, for a specified set of parameter values, a given Markov chain is unstable. A distinctive feature of our work is that our method is simulation based, and in addition broadly applicable and straightforward to implement.
It  provides statistical statements on the stability of the parameter set, but, notably, we have succeeded in providing explicit performance guarantees. Some of our experiments show that our technique provides us with useful insights for models for which the stability set has not been characterized so far.  

Our paper can be considered as a pioneering study on this topic, and various extensions and improvements are envisaged. An important first branch of research could relate to relaxing the assumptions imposed, such as the fact that we restrict ourselves to the class of Markov processes and the bounded step size assumption. Experiments that we performed for non Markovian tandem queues indicated that the approach still provides us with the correct result, if we perform our algorithm as if the underlying system is Markovian.  In order to remove the bounded step size assumption it would be necessary to use a concentration inequality that is stronger than Azuma--Hoeffding. Additionally, our experiments contrasted global and local search versions of the algorithm. We obtained mixed results on performance, and were unable to declare either version superior to the other. Determining conditions that point towards which of these versions should be used in different circumstances remains to be a challenge.

The objective of a second branch of research could be to enhance our procedure such that it can identify, in case instability is detected, which components of the multi-dimensional Markov chain are unstable. A third branch is of an empirical nature, and relates to models of which the stability region is not yet known. By performing systematic simulation studies one could possibly state conjectures.

\section*{Acknowledgements}{\small
The authors thank Yoni Nazarathy (The University of Queensland) for suggesting this area of research and for stimulating discussions. BP would like to acknowledge the support of the Australian Research Council (ARC) through the ARC Centre of Excellence for the Mathematical and Statistical Frontiers (ACEMS).  
BP and MM acknowledge support from Gravitation project N{\sc etworks}, grant number 024.002.003, funded by the Netherlands Organisation for Scientific Research (NWO).
NW's research  was partly funded by the VENI research programme, also funded by the NWO.}

\appendix
\section{Lemmas}
\begin{lemma}\label{Zmonotone} There exists a positive constant $w^*$ such that
\[
\mathbb{P}(W_1  \geq z\,|\,W_0 = v) \leq \mathbb{P}(W_1 \geq z\,|\,W_0 = w)\,.
\]
for $v$ and $w$ such that $z \le w^*\le v \le w$.
\end{lemma}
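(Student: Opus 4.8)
The plan is to reduce the statement to a one‑variable tail comparison for $Z(\cdot)$. Since $(W_k)$ is the Markov chain with $W_k=W_{k-1}+Z(W_{k-1})$, for any $u$ we have $\mathbb P(W_1\ge z\mid W_0=u)=\mathbb P\big(Z(u)\ge z-u\big)$, so the assertion is equivalent to $\mathbb P\big(Z(v)\ge z-v\big)\le\mathbb P\big(Z(w)\ge z-w\big)$ whenever $z\le w^*\le v\le w$. Under precisely these hypotheses $v\ge w^*\ge z$, hence $z-v\le0$ and $z-w\le0$, so by the second branch of \eqref{NastyBound} both sides equal $1$ and the inequality (indeed an equality) is immediate. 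The only point to verify here is that $w^*$ can be — and is — taken large enough that \eqref{NastyBound} genuinely defines a survival function, i.e.\ $\alpha_1(w)<0$ and $\alpha_3(w)<0$ for all $w\ge w^*$; this holds for large $w^*$ because $n$ is nondecreasing with $\tau(w)=cw+d\to\infty$, so that $\alpha_1(w)=\sigma\phi-\sigma\delta\,n(w)\to-\infty$ and $\alpha_3(w)=\sigma\phi+\kappa-w\to-\infty$.

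Because the property actually invoked downstream, through \eqref{monotone} in the proof of Proposition~\ref{Coupling}, is the monotonicity for \emph{all} $z$, I would then establish that stronger version. For $z\le w$ the right‑hand side is again $1$, so fix $z>w$ and put $s:=z-w>0$ and $\Delta:=w-v\ge0$, so that $z-v=s+\Delta$. The idea is to dominate \eqref{NastyBound} for $Z(v)$ at $s+\Delta$ by \eqref{NastyBound} for $Z(w)$ at $s$, one summand at a time. The clean observation is that the translation is exactly absorbed by the $\alpha_3$‑term, since $(s+\Delta)-\alpha_3(v)=s-\alpha_3(w)=:\beta$; hence the two ``second'' exponentials share the numerator $\beta^2$ and differ only through $\alpha_4(\cdot)=\phi_f^2\sigma^2n(\cdot)$ and the prefactor $n(\cdot)$. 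As $\tau$, and therefore $n$, is nondecreasing we have $n(v)\le n(w)$, and $m\mapsto m\exp\big(-\beta^2/(2\phi_f^2\sigma^2m)\big)$ is increasing on $(0,\infty)$ (its logarithmic derivative is $m^{-1}+\beta^2/(2\phi_f^2\sigma^2m^2)>0$), so the second summand for $Z(v)$ is at most the second summand for $Z(w)$. For the ``first'' exponentials I would use $n(v)\le n(w)$, hence $\alpha_2(v)\le\alpha_2(w)$, together with $\Delta\ge0$ and the explicit form $\alpha_1(u)=\sigma\phi-\sigma\delta\,n(u)$, and check that $\big((s+\Delta)-\alpha_1(v)\big)^2/\alpha_2(v)\ge(s-\alpha_1(w))^2/\alpha_2(w)$ for $w\ge w^*$; adding the two summands then yields $\mathbb P(Z(v)\ge s+\Delta)\le\mathbb P(Z(w)\ge s)$, as required.

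I expect the ``first'' exponential comparison in the last step to be the only genuine obstacle: unlike the $\alpha_3$‑term it does not cancel under the translation, so one has to balance the growth of the numerator — coming both from the shift $\Delta$ and from $\alpha_1$ becoming more negative as $n$ grows — against the simultaneous growth of the denominator $\alpha_2=(\phi_f+\delta)^2\sigma^2n$. Taking $w^*$ large, so that $n(w)$ is large for all $w\ge w^*$, makes these exponents comparable and furnishes the necessary slack (and, as noted above, is needed in any case for \eqref{NastyBound} to be well posed); the remaining manipulations are routine. Throughout, the single structural input doing the work is the monotonicity of $n$, which in turn comes from $c>0$ in $\tau(x)=cf(x)+d$; if convenient one may also replace the integer‑valued $n$ by any nondecreasing majorant without affecting the estimates.
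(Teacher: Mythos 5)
Your first paragraph is, as far as the statement is literally written, complete: with $z\le w^*\le v\le w$ both $z-v$ and $z-w$ are nonpositive, so both tails equal $1$ by the second branch of \eqref{NastyBound} and the inequality is an equality. You are also right that what Proposition~\ref{Coupling} actually invokes, namely \eqref{monotone} for all $z$, is strictly stronger, and your treatment of the $\alpha_3/\alpha_4$ summand of that stronger claim is correct and in fact tighter than the paper's: the paper only remarks that this term ``tends to zero as $z\to\infty$'', whereas your exact cancellation $(s+\Delta)-\alpha_3(v)=s-\alpha_3(w)$ combined with the monotonicity of $m\mapsto m\exp\bigl(-\beta^2/(2\phi_f^2\sigma^2 m)\bigr)$ gives a genuine pointwise domination.

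The gap is in the step you defer as ``routine'': the first-summand comparison $\bigl((s+\Delta)-\alpha_1(v)\bigr)^2/\alpha_2(v)\ge\bigl(s-\alpha_1(w)\bigr)^2/\alpha_2(w)$ does not follow from taking $w^*$ large, and is false without an extra hypothesis relating $c$, $\delta$ and $\sigma$. Taking square roots, the claim is that $G(u):=\bigl(z-u-\sigma\phi+\sigma\delta\,n(u)\bigr)/\sqrt{n(u)}$ is nonincreasing on $[w^*,z)$; with $n(u)\approx(cu+d)/\sigma$ a direct computation shows that $G'(u)$ has the sign of $(c\delta-1)u-(z-\sigma\phi)$ up to positive factors and bounded additive constants, and this is $\le 0$ throughout $(w^*,z)$ for all large $z$ precisely when $c\delta\le 2$ (unconditionally only when $c\delta\le 1$). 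For $c\delta>2$ one can take $v<w$ both close to $z$, with $z,v,w$ arbitrarily large, and the inequality reverses, so no choice of $w^*$ repairs it; there is a further local failure at the integer jumps of $n$, where $n(w)-n(v)=1$ while $\Delta=w-v$ can be arbitrarily small. The paper's own proof has the same soft spot but at least records a sufficient condition ($n(w)\le w$ for $w\ge w^*$, i.e.\ roughly $c\le\sigma$; with the constants of \eqref{NastyBound} retained, $\sigma\delta\,n(w)\le w$, i.e.\ $c\delta\lesssim 1$). Your argument therefore needs an analogous explicit restriction on the user's choice of $c$ in $\tau(x)=c\,f(x)+d$ --- or a different treatment of the first summand --- before the remaining manipulations can be called routine.
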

\begin{proof}
Noting the equality
\[
\mathbb P ( W_1 \geq z\, |\, W_0 = w) = \mathbb P(Z(w) \geq z-w),
\]
we claim it is sufficient to prove that the function
\begin{equation}\label{gform}
g(z,w) = \exp\left(- \frac{(z-w+n(w))^2}{bw}\right)
\end{equation}
is nondecreasing in $w$ for values of $z$ with $z\geq w$. This is because the second term in \eqref{NastyBound} evaluated at a point $z-w$, given our assumptions on the function $\tau$, will tend to zero as $z \to \infty$, and the first term is of the form \eqref{gform}. Also note that for $g$ nondecreasing the function $1 \wedge g$ is also nondecreasing. 

After taking logs and rearranging $g(v,\,z)\leq g(w,\,z)$, we see that it is sufficient to show that 
\begin{equation}\label{bounder}
\frac{z+ n(v)-v }{\sqrt{n(v)}} \geq \frac{z+n(w)-w}{\sqrt{n(w)}} 
\end{equation}
for $z \le w^*\le v \le w$. It is finally noted that \eqref{bounder} holds as long as for $w\geq w^*$ we have $n(w) \le w$. 
\end{proof}
\begin{lemma}\label{LemmaZ}
The random variables $Z(w)$ are $L^2$ bounded and\\
\[
{\mathbb E} Z(w) \rightarrow 0\qquad \text{as}\qquad w\rightarrow 0\,.
\]
\end{lemma}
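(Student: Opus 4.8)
The plan is to work directly from the closed-form survival function \eqref{NastyBound} supplied by Lemma~\ref{hoefd}, since both claims are assertions about moments of a random variable whose tail is known explicitly. Because ${\mathbb P}(Z(w)\ge z)=1$ for every $z\le 0$, the variable $Z(w)$ is nonnegative, so I would use the layer-cake identities
\[
{\mathbb E}\,Z(w)=\int_0^\infty {\mathbb P}(Z(w)\ge z)\,\d z,\qquad {\mathbb E}\,Z(w)^2=\int_0^\infty 2z\,{\mathbb P}(Z(w)\ge z)\,\d z,
\]
and substitute the two-term bound in \eqref{NastyBound}. Each summand is the tail of a Gaussian with mean $\alpha_1(w)$ (resp.\ $\alpha_3(w)$) and variance $\alpha_2(w)$ (resp.\ $\alpha_4(w)$), the second one weighted by $n(w)$; integrals of such tails against $1$ and against $2z$ are finite and expressible through the standard normal tail, which makes $L^2$-boundedness a finite computation. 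Tracking the coefficients $\alpha_i(w)$ and $n(w)$ then yields a bound uniform in $w$, using that the strongly negative means appearing for large $w$ suppress the positive-$z$ tail.

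For the limit itself I would examine how these ingredients behave as $w\to 0$. Since $\tau(w)=c\,w+d$ with $c,d\in(0,\infty)$, we have $\tau(w)\to d$ and hence $n(w)=\lceil \tau(w)/\sigma\rceil$ stabilises at $\lceil d/\sigma\rceil$; correspondingly each $\alpha_i(w)$ converges to a finite constant. Thus the integrand ${\mathbb P}(Z(w)\ge z)$ converges pointwise in $z$ as $w\to 0$, and the strategy is to pass this limit inside the integral defining ${\mathbb E}\,Z(w)$. To justify the interchange I would produce a $w$-independent integrable majorant valid on a neighbourhood of $w=0$: on such a neighbourhood $\tau(w)$ and $n(w)$ are bounded, so both Gaussian tails sit under a common super-Gaussian envelope $C\,\e^{-c'z^2}$ for large $z$, which is integrable and dominates the integrand. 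Dominated convergence then reduces $\lim_{w\to 0}{\mathbb E}\,Z(w)$ to the integral of the limiting survival function.

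The main obstacle is the second exponential term. Its prefactor $n(w)$ does not vanish as $w\to 0$, and its mean $\alpha_3(w)=\sigma\phi-w+\kappa$ tends to the strictly positive constant $\sigma\phi+\kappa$ rather than being driven to $-\infty$; consequently it is this term, and not the first, that governs the limiting value of the integral, so that showing its contribution tends to zero is the crux of the argument. Concretely I would replace $n(w)$ by its supremum over the neighbourhood, complete the square in the exponent exactly as in the proof of Lemma~\ref{Concentrate} to turn each tail into a fixed multiple of a standard normal tail integral, and then assemble the pointwise limit with the dominating bound. Combining the resulting value of $\lim_{w\to 0}{\mathbb E}\,Z(w)$ with the uniform $L^2$ estimate of the first paragraph would complete the proof.
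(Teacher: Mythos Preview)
There is a genuine problem, but it originates in the paper rather than in your reasoning: the displayed limit in the statement is a typo. The lemma is invoked in the proof of Proposition~\ref{WZero} to conclude that ${\mathbb E}[Z(W_{n-1})\mid W_{n-1}]\to 0$ when $W_{n-1}\to\infty$, and the paper's own proof of Lemma~\ref{LemmaZ} accordingly establishes ${\mathbb E}\,Z(w)\to 0$ as $w\to\infty$, not as $w\to 0$. You have taken the typo at face value and built an argument for the small-$w$ limit; that argument cannot close, because the small-$w$ claim is simply false. As you yourself observe, when $w\to 0$ one has $n(w)\to\lceil d/\sigma\rceil$, $\alpha_3(w)\to\sigma\phi_f+\kappa>0$, and $\alpha_4(w)$ bounded away from zero, so the second term in \eqref{NastyBound} converges to a fixed, nontrivial Gaussian tail on $[0,\infty)$ whose integral is strictly positive. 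Dominated convergence therefore yields a positive limit for ${\mathbb E}\,Z(w)$, not zero; the ``crux'' you isolate is not an obstacle to be overcome but a counterexample.

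Once the intended regime $w\to\infty$ is restored, your plan is essentially the paper's: use the layer-cake formulas for ${\mathbb E}\,Z(w)$ and ${\mathbb E}\,Z(w)^2$, bound each Gaussian-type tail integral directly, and exploit that $\alpha_1(w)\sim -\delta\sigma n(w)\to -\infty$ and $\alpha_3(w)\sim -w\to -\infty$ drive both contributions to zero (the polynomial prefactor $n(w)$ being harmless against the Gaussian decay). The paper carries this out by an integration-by-parts estimate on each term rather than by dominated convergence, but the substance is the same.
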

\begin{proof}
In particular, we bound the mean of $Z(w)$ for large values of $w$ as follows:
\begin{align*}
{\mathbb E} Z(w) & = 
\int_0^\infty {\mathbb P}(Z(w)\ge z){\rm d}z \\
&\leq \int_0^\infty \left[ \exp\left(- \frac{(z+a_1n(w) )^2}{b_1 n(w)}\right) + n(w)\,\exp\left(- \frac{(z + a_2 w)^2}{b_2 n(w)} \right) \right] {\rm d} z.
\end{align*}
We start by bounding the first of these terms:
\begin{align*}
& \int_0^\infty  \exp\left(- \frac{(z+a_1 n(w) )^2}{b_1 n(w)}\right) {\rm d} z \\
& = \int_{0}^\infty - \frac{b_1n(w)}{2(z+ a_1n(w))}{\rm d}\left(\exp\left(-\frac{(z+a_1n(w))^2}{b_1n(w)}\right)\right){\rm d} z \\
& = \frac{b_1}{2\,a_1 }\exp\left(-\frac{a_1^2\,n(w)}{b_1}\right) -\int_0^\infty \frac{b_1 n(w)}{2(z+a_1 n(w))^2}\exp\left(- \frac{(z+a_1 n(w) )^2}{b_1 n(w)}\right){\rm d}z\\
& \le \frac{b_1}{a_1}\exp\left(-\frac{a_1^2 n(w)}{b_1 }\right)\,.
\end{align*}
Upon applying a similar sequence of steps to the second term we find that
\[
{\mathbb E} Z(w)  \leq  \frac{b_1 }{a_1 }\exp\left(-\frac{a_1^2 n(w)}{b_1 }\right) +  \frac{b_2 n(w) }{2 a_2 w }\exp\left(-\frac{a^2_2 w^2}{b_2 n(w)}\right)\,.
\]
We conclude that ${\mathbb E} Z(w) \rightarrow 0$  as $w\rightarrow \infty$, as required.

We now analyze the second moment of $Z(w)$ to establish that these random variables are $L^2$ bounded. Observe that
\begin{align*}
\mathbb{E} \Big[ Z(w)^2 \Big] &= \int_0^\infty 2z\,\mathbb{P} \Big( Z(w) \geq z \Big) {\rm d}z \\
&= \int_0^\infty 2z\,\left[ \exp\left(- \frac{(z+a_1 n(w) )^2}{b_1 n(w)}\right) + n(w) \exp\left(- \frac{(z + a_2 w)^2}{b_2 n(w)} \right) \right]  {\rm d}z
\end{align*}
We bound the first of these terms as follows:
\begin{align*}
& \int_0^\infty 2z\,\exp \Big( - \frac{(z + a_1n(w))^2}{b_1 n(w)}\Big){\rm d}z\\
&= \int_0^\infty {b_1 n(w)} \frac{z}{(z+a_1n(w))} {\rm d} \left( -\exp\left(\frac{-(z+a_1 n(w))^2}{b_1 n(w)}\right)\right) \\
& \leq \int_0^\infty {b_1 n(w)} {\rm d} \left( -\exp\left(\frac{-(z+a_1 n(w))^2}{b_1 n(w)}\right)\right) \\
&= b_1n(w) \exp\left(-\frac{a_1^2 n(w)}{b_1}\right)\,.
\end{align*}
We then  apply similar steps to the second term, which yields
\[
\mathbb{E} \Big[ Z(w)^2 \Big] \leq b_1n(w) \exp\left(-\frac{a_1^2 n(w)}{b_1}\right)+ b_2 n(w)^2 \exp\left(-\frac{a_2^2 n(w)^2}{b_2 w}\right)\,.
\]
The right-hand terms are uniformly bounded in $w$, as required.
\end{proof}

\begin{proof}[Proof of Lemma \ref{hoefd}]
The set ${\mathcal L}$ is assumed to be stable. That is there exists $\delta >0$, $\sigma>0$ and $\kappa>0$ such that 
\begin{equation}\label{StabilityAssumption}
\mathbb E \left[ f(X^{(\lambda)}_k) - f(X^{(\lambda)}_0)\,|\,X^{(\lambda)}_0 = x \right] \leq -\delta \sigma
\end{equation}
for all $k>\sigma$, all $x$ such that $|x|> \kappa$, and $\lambda \in\mathcal L $.

Since \eqref{StabilityAssumption} only occurs after $\sigma$ time units have occurred, we consider our process on steps of size $\sigma$. That is, we consider the process $(X^{(\lambda)}_{\sigma n} : n\in\mathbb Z_+)$. Assuming we start with $x_0 > \kappa$, let $n(x_0)$ be the smallest integer for which $\sigma n(x_0) \geq \tau(x_0)$ holds. 

Since the increments of $f(X^{(\lambda)})$ are bounded by $\phi_f$ we have that
\begin{align}\label{a}
\mathbb P_{x_0} \Big( f(X^{(\lambda)}_{\tau(x_0)}) - f(x_0) \geq z \Big) \leq \mathbb P_{x_0} \Big( f(X^{(\lambda)}_{\sigma n(x_0)} ) - f(x_0) \geq z - \sigma \phi_f \Big)\,.
\end{align}
Let $n_\kappa$ be the hitting time for $(X^{(\lambda)}_{n\sigma} \} : n\in\mathbb Z_+)$ on the states $\{ x : |x|\leq \kappa\}$. 

By splitting the right-hand expression of \eqref{a} into terms depending on whether the event $n_\kappa \leq n(x_0)$ occurs or not, we obtain two terms
\begin{align}
 & {\mathbb P}_{x_0} \left( f(X^{(\lambda)}_{\sigma n(x_0)}) - f(x_0) \geq z - \sigma \phi_f  \right) \notag \\
& =  \,
{\mathbb P}_{x_0} \left( f(X^{(\lambda)}_{\sigma n(x_0)}) - f(x_0) \geq z - \sigma \phi_f  , n_\kappa > n(x_0) \right) \label{f1}\\
&\, +  
{\mathbb P}_{x_0} \left( f(X^{(\lambda)}_{\sigma n(x_0)}) - f(x_0) \geq z - \sigma \phi_f  , n_\kappa \le n(x_0) \right) \label{f2}
\end{align}
We deal with these two terms, \eqref{f1} and \eqref{f2}, separately.

First, we bound the term \eqref{f1}. We consider the process
\[
M_n = f(X^{(\lambda)}_{\sigma n}) - f(x_0) - \delta \sigma n ,
\]
which for times $n$ less than $n_\kappa$ is a supermartingale by the stability assumption \eqref{StabilityAssumption}. Due to our bounded increments assumption we can apply the Azuma--Hoeffding inequality to this process to obtain
\begin{align}
&\mathbb P_{x_0} \Big( f(X^{(\lambda)}_{\sigma n(x_0)}) - f(x_0) \geq z - \sigma \phi_f, n_\kappa > n(x_0) \Big) \notag \\
&\leq \mathbb P_{x_0} \Big( M_{n(x_0)} \geq z - \sigma \phi_f + \delta\sigma n(x_0)   \Big) \leq \exp\left( - \frac{( z - \sigma \phi_f + \delta \sigma n(x_0)   )^2}{2(\phi_f+\delta)^2 \sigma^2 n(x_0)} \right)\,.\label{boundp1}
\end{align}
This provides a bound on our first term \eqref{f1}.

We now bound the second term \eqref{f2}. For this term the process $f(X^{(\lambda)}_{\sigma n})$ has hit below level $\kappa$, so there must be an excursion from level $\kappa$ to level $z+f(x_0)$. There are at most $n(x_0)$ such excursions that can occur from below $z+f(x_0)$. We can apply the Azuma--Hoeffding inequality to each excursion. A simple union bound on these excursions then gives an upper bound that is, for our purposes, sufficiently tight. 

We let $n_0$ be a time for which $\kappa < f(X^{(\lambda)}_{\sigma n_0}) \le \kappa +\sigma\phi_{f}$. We remark that this condition is satisfied immediately after the process leaves the set of states $\{ x : |x| \leq \kappa \}$.
Again let $n_\kappa$ be the first time after $n_0$ for which  $\{ x : |x| \leq \kappa \}$ holds.

Now consider the process
\[
\hat{M}_n = f(X^{(\lambda)}_{\sigma (n \wedge n_\kappa)} - f(X^{(\lambda)}_{\sigma n_0})\,,\quad n\ge0,
\]
which, again by \eqref{StabilityAssumption}, is a supermartingale.

The process $\hat{M}$ follows an excursion of $f(X^{(\lambda)}_{\sigma n})$ from when it hits above $\kappa$ to when it hits below again. 
Further, let $\hat{M}^*_n$ be the maximum achieved by the process $\hat{M}$ by time $n$, that is 
\[
\hat{M}^*_n = \max_{k \le n}\{\hat{M}_k\}\,.
\]
Notice that for the event in \eqref{f2} to hold there must be an excursion of $\hat{M}^*$ from just above $\kappa$ to above $z - \sigma \phi_f + f(x_0)$. We can bound this probability using the Azuma--Hoeffding inequality as follows:
\begin{align*}
\mathbb P_{x_0} \Big( \hat{M}^*_{n(x_0)} \geq z - \sigma \phi_f  + f(x_0)- \kappa\Big) 
\leq & \exp\left( - \frac{( z - \sigma \phi_f + f(x_0)- \kappa )^2}{2\phi_f^2 \sigma^2 n(x_0)} \right)\,.
\end{align*}
Further, there are at most $n(x_0)$ possible excursions of this type. Thus we arrive at the bound
\begin{align}
&\mathbb P_{x_0}  \left( f(X^{(\lambda)}_{\sigma n(x_0)}) - f(x_0) \geq z - \kappa - \sigma \phi_f  , n_\kappa > n(x_0) \right) \notag \\
&\leq  n(x_0)\, \mathbb P_{x_0} \Big( \hat{M}^*_{n(x_0)} \geq z - \sigma \phi_f  + f(x_0)- \kappa\Big)  \notag \\
&\leq  n(x_0) \, \exp\left( - \frac{( z - \sigma \phi_f + f(x_0)- \kappa )^2}{2\phi_f^2 \sigma^2 n(x_0)} \right)\,. \label{boundp2}
\end{align}
Combining the bounds \eqref{boundp1} and \eqref{boundp2}, we find the claimed inequality. 
\end{proof}

\bibliographystyle{acmtrans-ims}
\bibliography{References.bib}

\begin{thebibliography}{}
\ifx \url   \undefined \def \url#1{#1}   \fi

\bibitem{AndrewsZhang2003}
\textsc{Andrews, M.} \textsc{and} \textsc{Zhang, L.} (2003).
\newblock Achieving stability in networks of input-queued switches.
\newblock \emph{Networking, IEEE/ACM Transactions on\/}~\textbf{11},~5,
  848--857.

\bibitem{BaBo99}
\textsc{Baccelli, F.} \textsc{and} \textsc{Bonald, T.} (1999).
\newblock Window flow control in {FIFO} networks with cross traffic.
\newblock \emph{Queueing Systems\/}~\textbf{32},~1, 195--231.

\bibitem{bcmp75}
\textsc{Baskett, F.}, \textsc{Chandy, K.~M.}, \textsc{Muntz, R.~R.},
  \textsc{and} \textsc{Palacios, F.~G.} (1975).
\newblock Open, closed, and mixed networks of queues with different classes of
  customers.
\newblock \emph{Journal of the ACM\/}~\textbf{22},~2, 248--260.

\bibitem{BoMDPr12}
\textsc{Bordenave, C.}, \textsc{McDonald, D.}, \textsc{and}
  \textsc{Prouti\`ere, A.} (2012).
\newblock Asymptotic stability region of slotted {ALOHA}.
\newblock \emph{Information Theory, IEEE Transactions on\/}~\textbf{58},~9,
  5841--5855.

\bibitem{Br94}
\textsc{Bramson, M.} (1994).
\newblock Instability of {FIFO} queueing networks.
\newblock \emph{The Annals of Applied Probability\/}~\textbf{4},~2, 414--431.

\bibitem{Br08}
\textsc{Bramson, M.} (2008).
\newblock \emph{Stability of queueing networks}.
\newblock Springer Berlin Heidelberg, New York.

\bibitem{Dai95}
\textsc{Dai, J.} (1995).
\newblock On positive {H}arris recurrence of multiclass queueing networks: A
  unified approach via fluid limit models.
\newblock \emph{The Annals of Applied Probability\/}~\textbf{5},~1, 49--77.

\bibitem{Ghaderi2014}
\textsc{Ghaderi, J.}, \textsc{Borst, S.}, \textsc{and} \textsc{Whiting, P.}
  (2014).
\newblock Queue-based random-access algorithms: fluid limits and stability
  issues.
\newblock \emph{Stochastic Systems\/}~\textbf{4},~1, 81--156.

\bibitem{Hairer2010}
\textsc{Hairer, M.} (2010).
\newblock Convergence of {M}arkov processes.
\newblock \emph{Lecture notes\/}.
\newblock Available at http://www.hairer.org/notes/Convergence.pdf.

\bibitem{Ja63}
\textsc{Jackson, J.~R.} (1963).
\newblock Jobshop-like queueing systems.
\newblock \emph{Management Science\/}~\textbf{10},~1, 131--142.

\bibitem{ke75}
\textsc{Kelly, F.~P.} (1975).
\newblock Networks of queues with customers of different types.
\newblock \emph{Journal of Applied Probability\/}~\textbf{12},~3, 542--554.

\bibitem{Ke79}
\textsc{Kelly, F.~P.} (1979).
\newblock \emph{Reversibility and Stochastic Networks}.
\newblock Wiley, Chichester.

\bibitem{Kirkpatrick1983}
\textsc{Kirkpatrick, S.} \textsc{and} \textsc{Vecchi, M.~P.} (1983).
\newblock Optimization by simulated annealing.
\newblock \emph{Science\/}~\textbf{220},~4598, 671--680.

\bibitem{KuSe90}
\textsc{Kumar, P.~R.} \textsc{and} \textsc{Seidman, T.~I.} (1990).
\newblock Dynamic instabilities and stabilization methods in distributed
  real-time scheduling of manufacturing systems.
\newblock \emph{Automatic Control, IEEE Transactions on\/}~\textbf{35},~3,
  289--298.

\bibitem{LeMan}
\textsc{Leahu, H.} \textsc{and} \textsc{Mandjes, M.} (2016).
\newblock A numerical approach to stability of multiclass queueing networks.
\newblock \emph{(preprint)\/}.

\bibitem{LuKu91}
\textsc{Lu, S.~H.} \textsc{and} \textsc{Kumar, P.~R.} (1991).
\newblock Distributed scheduling based on due dates and buffer priorities.
\newblock \emph{Automatic Control, IEEE Transactions on\/}~\textbf{36},~12,
  1406--1416.

\bibitem{MP07}
\textsc{MacPhee, I.}, \textsc{Menshikov, M.}, \textsc{Petritis, D.},
  \textsc{and} \textsc{Popov, S.} (2007).
\newblock A {M}arkov chain model of a polling system with parameter
  regeneration.
\newblock \emph{The Annals of Applied Probability\/}~\textbf{17},~5/6,
  1447--1473.

\bibitem{mckeown1999achieving}
\textsc{McKeown, N.}, \textsc{Mekkittikul, A.}, \textsc{Anantharam, V.},
  \textsc{and} \textsc{Walrand, J.} (1999).
\newblock Achieving 100\% throughput in an input-queued switch.
\newblock \emph{Communications, IEEE Transactions on\/}~\textbf{47},~8,
  1260--1267.

\bibitem{meyn2012markov}
\textsc{Meyn, S.} \textsc{and} \textsc{Tweedie, R.} (2012).
\newblock \emph{Markov Chains and Stochastic Stability}.
\newblock Communications and Control Engineering. Springer London.

\bibitem{Nazarathy2015}
\textsc{Nazarathy, Y.}, \textsc{Taimre, T.}, \textsc{Asanjarani, A.},
  \textsc{Kuhn, J.}, \textsc{Patch, B.}, \textsc{and} \textsc{Vuorinen, A.}
  (2015).
\newblock The challenge of stabilizing control for queueing systems with
  unobservable server states.
\newblock In \emph{Control Conference (AUCC), 2015 5th Australian}. Engineers
  Australia, Gold Coast, 342--347.

\bibitem{RySt93}
\textsc{Rybko, A.~.} \textsc{and} \textsc{Stolyar, A.~L.} (1993).
\newblock Ergodicity of stochastic processes describing the operation of open
  queueing networks.
\newblock \emph{Problemy Peredachi Informatsii\/}~\textbf{28},~3, 3--26.

\bibitem{TaEp91}
\textsc{Tassiulas, L.} \textsc{and} \textsc{Ephremides, A.} (1992).
\newblock Stability properties of constrained queueing systems and scheduling
  policies for maximum throughput in multihop radio networks.
\newblock \emph{Automatic Control, IEEE Transactions on\/}~\textbf{37},~12,
  1936--1948.

\bibitem{Tassiulas1993}
\textsc{Tassiulas, L.} \textsc{and} \textsc{Ephremides, A.} (1993).
\newblock Dynamic server allocation to parallel queues with randomly varying
  connectivity.
\newblock \emph{Information Theory, IEEE Transactions on\/}~\textbf{39},~2,
  466--478.

\bibitem{wieland2003queueing}
\textsc{Wieland, J.~R.}, \textsc{Pasupathy, R.}, \textsc{and}
  \textsc{Schmeiser, B.~W.} (2003).
\newblock Queueing network simulation analysis: queueing-network stability:
  simulation-based checking.
\newblock In \emph{Proceedings of the 35th conference on Winter simulation:
  driving innovation}. Winter Simulation Conference, 520--527.

\bibitem{Wi91}
\textsc{Williams, D.} (1991).
\newblock \emph{Probability with Martingales}.
\newblock Cambridge University Press, Cambridge.

\end{thebibliography}
\end{document}